\begin{document}

\newtheorem{thm}{Theorem}
\newtheorem{prop}[thm]{Proposition}
\newtheorem{conj}[thm]{Conjecture}
\newtheorem{lem}[thm]{Lemma}
\newtheorem{cor}[thm]{Corollary}
\newtheorem{axiom}[thm]{Axiom}
\newtheorem{sheep}[thm]{Corollary}
\newtheorem{deff}[thm]{Definition}
\newtheorem{fact}[thm]{Fact}
\newtheorem{example}[thm]{Example}
\newtheorem{assumption}[thm]{Assumption}
\newtheorem{remark}[thm]{Remark}
\newtheorem{quest}[thm]{Question}
\newtheorem{zample}[thm]{Example}
\newtheorem{hyp}[thm]{Hypothesis}

\newcommand{\sthat}{\hspace{.1cm}| \hspace{.1cm}}
\newcommand{\id}{\operatorname{id} }
\newcommand{\acl}{\operatorname{acl}}
\newcommand{\dcl}{\operatorname{dcl}}
\newcommand{\irr}{\operatorname{irr}}
\newcommand{\aut}{\operatorname{Aut}}
\newcommand{\fix}{\operatorname{Fix}}
\newcommand{\abs}{\operatorname{abs}}
\newcommand{\ord}{\operatorname{ord}}
\newcommand{\GL}{\operatorname{GL}}
\newcommand{\PGL}{\operatorname{PGL}}
\newcommand{\ch}{\operatorname{ch}}

\newcommand{\oo}{\mathcal{O}}
\newcommand{\aaa}{\mathcal{A}}
\newcommand{\mm}{\mathcal{M}}
\newcommand{\curg}{\mathcal{G}}
\newcommand{\bbf}{\mathbb{F}}
\newcommand{\A}{\mathbb{A}}
\newcommand{\R}{\mathbb{R}}
\newcommand{\PP}{\mathbb{P}}
\newcommand{\Q}{\mathbb{Q}}
\newcommand{\C}{\mathbb{C}}
\newcommand{\cc}{\mathcal{C}}
\newcommand{\dd}{\mathcal{D}}
\newcommand{\N}{\mathbb{N}}
\newcommand{\Z}{\mathbb{Z}}
\newcommand{\cF}{\mathcal F}
\newcommand{\cB}{\mathcal B}
\newcommand{\cU}{\mathcal U}
\newcommand{\cV}{\mathcal V}
\newcommand{\cG}{\mathcal G}
\newcommand{\cD}{\mathcal D}
\newcommand{\curly}{\mathcal{C}}
\newcommand{\durly}{\mathcal{D}}
\newcommand{\fff}{\mathcal{F}}
\newcommand{\calc}{\mathcal{C}}
\newcommand{\calS}{\mathcal{S}}
\newcommand{\GG}{\mathbb{G}}
\newcommand{\Gal}{\mathrm{Gal}}
\newcommand{\Aut}{\mathrm{Aut}}
\newcommand{\signature}{\mathrm{sign}}
\newcommand{\uu}{\underline}
\newcommand{\di}{}

\definecolor{mypink3}{cmyk}{0, 0.7808, 0.4429, 0.1412}

\newcommand{\mahrad}[1]{{\color{blue} \sf $\clubsuit\clubsuit\clubsuit$ Mahrad: [#1]}}
\newcommand{\ramin}[1]{{\color{red}\sf $\clubsuit\clubsuit\clubsuit$ Ramin: [#1]}}
\newcommand{\ramintak}[1]{{\color{mypink3}\sf $\clubsuit\clubsuit\clubsuit$ Ramin2: [#1]}}
\newcommand{\ramintakloo}[1]{{\color{green} \sf $\clubsuit\clubsuit\clubsuit$ Ramin3 [#1]}}

\DeclareRobustCommand{\hlgreen}[1]{{\sethlcolor{green}\hl{#1}}}
\DeclareRobustCommand{\hlcyan}[1]{{\sethlcolor{cyan}\hl{#1}}}
\newcommand{\Fmodtor}{F^\times / \mu(F)}
\title[Shintani Zeta Function]{Automorphic Form Twisted Shintani Zeta Function over number fields}
\author{Eun Hye Lee}
\address{Department of Mathematics, Texas Christian University, TCU Box 298900, Fort Worth, TX 76129}
\email{eun.hye.lee@tcu.edu}

\author{Ramin Takloo-Bighash}
\address{Dept. of Math, Stat, and Comp. Sci, University of Illinois at Chicago, 851 S. Morgan St, Chicago, IL 60607}
\email{rtakloo@uic.edu}
\begin{abstract}
In this paper we study the twisted Shintani zeta function over number fields. 
\end{abstract}
\maketitle

\section{Introduction}

Shintani \cite{Shintani} studied the basic analytic properties of a zeta function he associated to the prehomogeneous vector space of binary cubic forms in order to understand the behavior of certain class numbers. Hough \cite{Bob1, Bob2} introduced certain versions of  Shintani's zeta functions twisted by Maass forms over $\Q$. Hough and Lee \cite{Bob-L} studied twists of Shintani zeta functions by an Eisenstein series. In this note, we generalize these constructions to arbitrary number fields and establish the analytic continuation of the resulting twisted zeta function. Let us explain what we prove. 

Let $V$ denote affine four-dimensional space and $G$ denote $\GL_2$.  A point $x=(x_1,x_2,x_3,x_4)\in V$ is identified with the binary cubic form given by:
$$
F_x(u,v)=x_1u^3+x_2u^2v+x_3uv^2+x_4v^3.
$$
Define the action $g\cdot x$ of an element 
$g=\begin{pmatrix}a&b\\c&d\end{pmatrix}\in G$ on $x\in V$ by the equation
\begin{equation}
F_{g\cdot x}(u,v)=(\det(g))^{-1}F_x\left((u,v)\begin{pmatrix}a&b\\c&d\end{pmatrix}\right),
\end{equation}
where $\det(g)=ad-bc$. Note that  $\begin{pmatrix}a&0\\0&a\end{pmatrix}\cdot x=ax$.

A nonzero rational function $R(x)$ on $V$ is called a relative $G$-invariant if there is a character $\chi$ of $G$ such that $R(g\cdot x)=\chi(g)R(x)$ for all $x\in V$ and $g\in G$. The group of relative $G$-invariants has the generator
$$
P(x)=x_2^2x_3^2+18x_1x_2x_3x_4-4x_2^3x_4-4x_1x_3^3-27x_1^2x_4^2
$$
with the associated character $\det(g)^2$. The hypersurface $S=\{x\in V\ |\ P(x)=0\}$ is invariant under $G$. 

Now let $k$ be a number field, and let $\A= \A_k$ be its ring of adeles. We let $V_k$ be the space of binary cubic forms with coefficients in $k$, and let $S_k$ be the set of forms whose discriminant is zero. We also let $V'_k = V_k \setminus S_k$.  Fix a Schwartz function $\Phi$ on $\A^4$. For an automorphic form $\varphi$ on $\GL_2$ over $k$ and $s \in \C$ we set 

$$
Z(s, \Phi, \varphi) = 
\int_{G_\A / G_k} |\det g|^s \varphi(g) \sum_{x \in V_k'} \Phi(g \cdot x) \, dg. 
$$
This definition should be compared with the zeta functions in \cite[\S 4]{W}, as well as \cite[Eq. (72)]{Bob1}, \cite[Eq. (3.1)]{Bob2}, and \cite[Eq. (29)]{Bob-L}. 

\

In this paper we study the analytic continuation of this zeta function. We note that if $\varphi$ is a cusp form, then it is bounded, and in this case, Wright \cite[Theorem 4.1]{W} implies that $Z(s, \Phi, \varphi)$ is absolutely convergent for $\Re s > 2$ and it defines a holomorphic function for such $s$. If $\varphi = E(\tau, g)$ is the Eisenstein series given by Equation \eqref{eis-series} of \S \ref{subsect:eisenstein}, because of the functional equation \eqref{eis-funct} in \S \ref{subsect:eisenstein} we may assume $\Re \tau \geq 0$. In this case, $Z(s, \Phi, \varphi)$ is absolutely convergent for $\Re s > 2 + \Re \tau$. In order to outline what we prove we need some notation. For $\varphi$ and $\Phi$ as above, we set 

$$
Z_+(s, \Phi, \varphi) = \int_{|\det g| \geq 1} |\det g|^s \varphi(g) \sum_{x \in V_k'} \Phi(g \cdot x) \, dg, 
$$
$$
I(s, \Phi, \varphi) =  \int_{|\det g| \leq 1} |\det g|^s \varphi(g) \sum_{x \in S_k} \Phi(g^\iota \cdot x)  dg, 
$$ 
and 
$$J(s, \Phi, \varphi) =  \int_{|\det g| \leq 1} |\det g|^s \varphi(g) \sum_{x \in S_k} \Phi(g \cdot x)  dg, 
$$
where $g^\iota = (\det g)^{-1} g$. These functions are all absolutely convergent for $\Re s$ large and they define holomorphic functions there. By Lemma \ref{lem:6}, the function $Z_+(s, \Phi, \varphi)$ has an analytic continuation to an entire function. We will see in Lemma \ref{lem:7} that for $\Re s$ large,  
\begin{equation}\label{eq:split}
Z(s, \Phi, \varphi) = Z_+(s, \Phi, \varphi) + Z_+(2-s. \widehat\Phi, \varphi) + I(s-2, \widehat\Phi, \varphi) - J(s, \Phi, \varphi). 
\end{equation}
This means that in order to understand the analytic behavior of $Z$, we need to analyze the functions $I$ and $J$. 

\

Our first result is for the function $I$.  

\begin{thm}\label{thm:main} 
    \begin{enumerate}
    \item If $\varphi$ is a cusp form, as a function of $s$,  $I(s, \Phi, \varphi)$ has an analytic continuation to an entire function; 
    \item The function $I(s, \Phi, E(\tau))$ has an analytic continuation to a meromorphic function on the whole complex plane with explicit poles and residues given by Proposition \ref{prop:explicit}.
    \end{enumerate}
\end{thm}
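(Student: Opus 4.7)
The plan is to decompose the singular set $S_k$ into $G_k$-orbits and unfold each orbital sum against the integral. Over $k$ the decomposition is $S_k = \{0\} \sqcup S_k^{(3)} \sqcup S_k^{(2)}$, where $S_k^{(3)}$ is the $G_k$-orbit of $v^3$ (forms with a rational triple root) and $S_k^{(2)}$ is the orbit of $uv^2$ (forms with a rational double root and a distinct rational simple root); rationality of the multiple root on $k$-forms follows from the uniqueness of such a root on a cubic together with Galois-equivariance. The zero orbit contributes $\Phi(0)\int_{|\det g|\le 1}|\det g|^s\varphi(g)\,dg$, a truncated Mellin-type integral whose behavior is controlled directly by the growth or vanishing properties of $\varphi$.

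For each nonzero orbit, choose a representative $x_i$ with $k$-rational stabilizer $H_i \subset G$: the stabilizer $H_3$ of $v^3$ is a two-dimensional subgroup that contains the unipotent radical of a Borel, while the stabilizer $H_2$ of $uv^2$ is the one-dimensional split torus $\{\operatorname{diag}(a,1) : a \in \GG_m\}$. Using the $G_k$-invariance of $\varphi$, unfold to obtain
$$
\int_{H_i(k)\backslash G_\A,\ |\det g|\le 1} |\det g|^s\, \varphi(g)\, \Phi(g^\iota\cdot x_i)\, dg.
$$
Now split this as an $H_i(\A)/H_i(k)$-bundle over $H_i(\A)\backslash G_\A$; the inner fiber integral, after a Mellin-type computation along $H_i$ that absorbs both the $|\det g|^s$ factor and the scalar action on $x_i$, becomes a function on the base to be paired with a Schwartz factor coming from $\Phi$.

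For part (i), cuspidality of $\varphi$ forces the fiber integral to vanish along the unipotent radical of $H_3$ and to decay rapidly along the torus $H_2$. After Iwasawa decomposition on $H_i(\A)\backslash G_\A$, this rapid vanishing/decay makes the remaining integral converge absolutely for every $s\in\C$, yielding the entire continuation.

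For part (ii), substitute $\varphi = E(\tau,\cdot)$ and exploit the analytic theory of the Eisenstein series: either via a second unfolding against the Bruhat decomposition of $E$, or by replacing $E$ with its constant-term expansion along the Borel and controlling the nonconstant remainder. Either way, $I(s,\Phi,E(\tau))$ reduces to a combination of two-parameter adelic Tate integrals in $s$ and $\tau$, whose meromorphic continuation and explicit poles follow from Tate's thesis with residues expressible in terms of Dedekind zeta values of $k$. The main obstacle is precisely this last step: tracking the interaction of the $s$- and $\tau$-pole lines coming from the two orbital contributions, keeping the measure factors and $|\det|$-shifts consistent, and matching the resulting pole/residue data with the explicit form predicted by Proposition \ref{prop:explicit}. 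Convergence of the unfolded integrals in a suitable right half-plane, prior to continuation, is verified by dominating the singular orbit sum by a standard Epstein-type majorant.
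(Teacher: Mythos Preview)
Your orbit decomposition and unfolding strategy matches the paper's, and you are right that the triple-root orbit $S^{(3)}_k$ dies by cuspidality via the unipotent integral. The genuine gap is in the double-root orbit for part~(i). The assertion that cuspidality makes the fiber integral ``decay rapidly along the torus $H_2$'' so that the remaining integral ``converges absolutely for every $s\in\C$'' hides essentially all of the work. After unfolding and Iwasawa, the integral has the shape $\int_0^1 z^{2s+1}\,d^\times z \int_{\A^\times}\int_{\A^1}(\cdots)$, and the $z$-integral alone diverges for $\Re s\le -\tfrac12$; the noncompact $\A^1$-integral must also be controlled, which over a number field with $[k:\Q]>1$ is delicate because $k_\infty^1$ is not finite. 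The paper handles this by first passing to the global Whittaker function of $\varphi$ (collapsing the $k^\times$-sum), taking a partial Fourier transform of $\Phi$, and then proving a separate technical lemma (Lemma~\ref{lem:fundamental}) that uses Mellin inversion in $n$ complex variables, an arithmetic--geometric mean estimate over $k_\infty^1$, and explicit local Whittaker/$L$-function computations to show the resulting expression $Z_I(s,f,\varphi)$ is entire. Your sketch does not indicate the Whittaker step or any mechanism for pushing past the obvious half-plane of convergence.

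For part~(ii) your constant-term idea is correct in spirit, but you do not address why the $\varphi_N$-contribution is well-defined: $\varphi_N$ is not $G_k$-invariant, so one cannot simply insert it before unfolding, and after unfolding its polynomial growth makes the naive integrals diverge. The paper regularizes by multiplying by Shintani's averaged Eisenstein series $\mathscr E(\psi,w;g)$, computing for $\Re w\gg 0$, shifting contours in the auxiliary variable $z$, and then extracting the residue as $w\to\tfrac12^{+}$; only after this does one land on the Tate integrals $Z(F_\Phi,\cdot)$ and $Z(f_\Phi,\cdot)$ whose poles give Proposition~\ref{prop:explicit}. Your proposal correctly predicts the Tate-integral endpoint but skips the regularization that makes the reduction legitimate.
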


Theorem \ref{thm:main} is the combination of Theorems \ref{thm:cuspI} and \ref{thm:eisI} below, whose proofs are presented in \S \ref{sect:I} and \S \ref{sect:eisI}, respectively. In light of Equation \eqref{eq:split} and Theorem \ref{thm:main}, the analytic continuation of $Z$ is controlled by $J$. The following result is concerned with the analytic continuation of this function. 

\begin{thm}\label{thm:main2}
    \begin{enumerate}
        \item If $\varphi$ is a cusp form, as a function of $s$, $J(s, \Phi, \varphi)$ has an analytic continuation to a holomorphic function on $\Re s > 3/4$.   
        \item 
        The function $J(s, \Phi, E(\tau)$ has an analytic continuation to a meromorphic function on $\Re s > 3/4$. 
        On this domain, $J(s, \Phi, E(\tau)) - H(s, \Phi, \tau)$ with $H$ as in Proposition \ref{prop:explicit} is holomorphic.
    \end{enumerate}
\end{thm}

This theorem is the combination of Theorems \ref{thm:cuspJ} and \ref{thm:eisJ} whose proofs are presented in \S\ref{sect:J} and \S\ref{sect:eisJ}, respectively. 

\

Combining everything together we obtain the following result: 

\begin{thm}\label{thm:main3}
For $\varphi$ a cusp form or $\varphi = E(\tau)$, the function $Z(s, \Phi, \varphi)$ has an analytic continuation to $\Re s > 3/4$ 
    \begin{enumerate}
        \item If $\varphi$ is a cusp form, then the function is holomorphic on the domain $\Re s > 3/4$. 
        \item In the Eisenstein series case, $Z(s, \Phi, E(\tau)) - H(s-2, \widehat\Phi, \tau) + H(s, \Phi, \tau)$ with $H$  as in Proposition \ref{prop:explicit} is holomorphic on the domain $\Re s > 3/4$.
    \end{enumerate}
\end{thm}

\

Here are some related results in literature. The papers by Hough \cite{Bob1, Bob2} and Hough-Lee \cite{Bob-L} consider analogues of our twisted Shintani zeta functions over $\Q$. The papers \cite{Bob1, Bob-L} assume that the Schwartz function $\Phi$ vanishes on the singular set. If the Schwartz function $\Phi$ vanishes on the singular set, then the analytic continuation and poles of $Z$ are determined by those of $I$.  For that reason, our Theorem \ref{thm:main3} generalizes \cite[Lemma 1]{Bob1}, for cusp forms, and \cite[Theorem 1]{Bob-L}, for Eisenstein series, from $\Q$ to an arbitrary number field.  Hough \cite[Theorem 1.1]{Bob2} proves the analytic continuation to the domain $\Re s > 1/8$ for $k=\Q$ and $\varphi$ a Maass Hecke eigenform. Hough's result is strictly stronger than our result for $n=1$.  Hough obtains no results for $k \ne \Q$ or when $\varphi$ is an Eisenstein series.  

\

Our methodology here is adelic and as such, it is inspired by \cite{W}. Also, we will use the theory of automorphic forms on $\GL(2)$ as developed by Jacquet and Langlands \cite{JL}. We will be making special use of local and global Whittaker functions, explicit formulae for local Whittaker functions, and zeta integrals in terms of Whittaker functions. The unfolding process we do in \S \ref{sect:I} and \S \ref{sect:J} should be familiar to any practitioner of the theory of automorphic forms. While everything we need is already in \cite{JL}, we will be using the more accessible classical references \cite{Gelbart, Godement} for Jacquet-Langlands theory. 
Because of our methods, all of what we do here is completely uniform and makes no distinction between different number fields. What we do here can be thought of as a generalization of the method of Hough \cite{Bob1} for cusp forms and Hough \cite{Bob2, Bob-L} for Eisenstein series (see also Wright \cite{W}), but there are a number of technical issues to overcome. For example, Hough \cite{Bob1} takes advantage of the fact that $\Q_\infty^1 = \{+1, -1\}$  is a finite set. Clearly, if $k \ne \Q$, $k_\infty^1$ is not finite. This in effect means that Hough's two variable Mellin transform trick in \cite[\S 5]{Bob1} does not work for an arbitrary number field. We remedy this situation 
by doing a single Mellin transform and using some elementary arguments, c.f. Lemma \ref{lem:AMGM}. 
As opposed to Hough \cite{Bob2} our proof of the Theorem \ref{thm:main2} does not use averaged Eisenstein series and for that reason it is more elementary (we do use averaged Eisenstein series to prove Theorem \ref{thm:main} and \ref{thm:main2} for Eisenstein series, however). Finally, in contrast to all previous results, the three results, Theorems \ref{thm:main}, \ref{thm:main2}, and \ref{thm:main3}, do not assume that the cusp form $\varphi$ is unramified. The proof we write for the unramified Eisenstein series $E(\tau)$ goes through for any Eisenstein series. We could have, also, as easily (!) considered twists of our zeta functions with an arbitrary quasi-character.  

\

The results we obtain here are expected to have applications in the study of shapes of  cubic extensions of number fields, c.f. \cite{Bob-L2}. We address these applications in a subsequent work. 

\

This paper is organized as follows. After the notation section, we review background materials in \S \ref{sect:pre}. We recall some basic properties of the space of binary cubic forms in \S \ref{sect:binary}.  The functions $I$ and $J$ are introduced in in \S \ref{sect:zeta}. 
The proofs of the main theorems for cusp forms are presented in \S \ref{sect:cusp}. We treat Eisenstein series in \S \ref{sect:eis}. 

\

It is clear that this papers owes a great deal to Wright \cite{W}, Hough \cite{Bob1, Bob2}, and Hough-Lee \cite{Bob-L}. We have benefited from extensive conversations with Robert Hough, Takashi Taniguchi, and Frank Thorne. We also benefited from comments from an anonymous referee. The first author is partially supported by a research grant from the Texas Christian University and the second author is partially supported by a collaboration grant from the Simons Foundation. 

\section*{Notation}

We fix a number field $k$ throughout this paper. We let $M_k$ be the set of places of $k$, $M_0$ the set of finite places, and $M_\infty$ the set of archimedean places, respectively.  For a place $v$ of $k$, we denote the corresponding completion of $k$ by $k_v$.  We let $\A = \A_k$ be the ring of adeles of $k$, and $\A^\times$ the group of ideles. We define $\A^1$ to be the set of $t \in \A^\times$ such that $|t|_\A=1$.
For each $\lambda \in \R_{>0}$ we let $\uu{\lambda} \in \A^\times$ be the idele $(t_v)$ such for each $v \mid \infty$, $t_v = \lambda^\frac{1}{n}$ and for $v \nmid \infty$, $t_v=1$.

In this paper we let $G$ be the algebraic group $\GL_2$. For a ring $R$ the set of $R$-points of $G$ is denoted by $G_R$. We will use similar notations for any algebraic subgroup of $G$.  Let $B$, $A$, and $N$ denote the algebraic subgroups of lower triangular elements of $G$, of diagonal elements of $G$, and of unipotent lower triangular elements of $G$, respectively.
Let $n(x) = \begin{pmatrix}
    1 \\ x & 1 
\end{pmatrix}$, $a(t_1, t_2) = \begin{pmatrix}
    t_1 \\ & t_2 
\end{pmatrix}$, $a(t) = a(t, t^{-1})$, and $d(t) = a(t, t)$. 

For $v\in M_\infty$, let $U_v$ be the subgroup of all unitary matrices in $G_{k_v}$. Explicitly, $U_v$ is a copy of $O(2)$, the group of $2\times 2$ orthogonal real matrices, if $k_v=\R$, or a copy of $U(2)$, the group of $2\times 2$ unitary complex matrices, if $k_v=\C$. For $v\in M_0$, let $U_v$ be $G_{\oo_v}$. Then for all places $v$, $U_v$ is a maximal compact subgroup of $G_{k_v}$. Moreover, $U_\A=\prod_v U_v$ is a maximal compact subgroup of $G_\A$.

By the Iwasawa decomposition, the map $(k,b)\mapsto kb$ is a surjection of $U_\A\times B_\A$ onto $G_\A$. We may express any $g\in G_\A$, therefore, in the form $g=ka(t_1,t_2)n(c)$ with $k\in U_\A$, $t_1,t_2\in \A^\times$, and $c\in \A$. The function $\di t(g)=\left|\frac{t_1}{t_2}\right|_\A^{1/2}$ is independent of the choices made. Let $G_\A^1$ be the group of $g\in G_\A$ with $|\det(g)|_\A=1$. Let $B_\A^1=B_\A\cap G_\A^1$ and $B_\A^0$ be the subgroup of $b\in B_\A^1$ with $t(b)=1$. The group $B_\A^0$ is unimodular, i.e., left-invariant measures are right-invariant as well. We will write explicit measures on these groups in \S \ref{subsect:measure}.

For any locally compact Abelian group $X$, we define $\mathscr{S}(X)$ to be the space of Schwartz-Bruhat functions on $X$. We have three special cases that we now explain. In the first case, $X$ is isomorphic to $\R^l$ for some $l$. Pick an $\R$-basis $\{e_1, \dots, e_l\}$ for $X$. Also choose a Banach norm $\| \cdot \|$ on $X$. Given $\alpha = (\alpha_1, \dots, \alpha_l)$, define 
$$
\partial_x^\alpha = \left( \frac{\partial}{\partial x_1} \right)^{\alpha_1} \cdots \left( \frac{\partial}{\partial x_l} \right)^{\alpha_l}. 
$$
Then $\mathscr{S}(X)$ is the space of smooth functions $\Phi$ such that for all $\nu \geq 0$, 
$$
\sup_{x \in X} (1 + \|x\|^v) \sum_{|\alpha| \leq \nu} |\partial_x^\alpha \Phi(x)| < \infty. 
$$

If $X= F^l$ for some non-archimedean field $F$, then $\mathscr{S}(X)$ is defined to be the space of local constant compactly supported functions on $X$. 

If $X = \A^l$, then we define $\mathscr{S}(X)$ to be the space of finite linear combination of functions of the form 
$$
\prod_v f_v
$$
such that for each $v$, $f_v \in \mathscr{S}(k_v^l)$, and for almost all $v$, $f_v = \ch_{\oo^l}$.

\section{Preliminaries}\label{sect:pre}

\subsection{Automorphic forms}
We refer the readers to Gelbart \cite{Gelbart} for preliminaries on the theory of automorphic forms. 
For the purposes of this paper, automorphic forms are functions on $\GL_2(\A)/\GL_2(k)$, with trivial central character. We also assume that the forms we work with have a $U_\A$-type $\eta$. We call a form $\varphi$ cuspidal if for almost all $g$, 
$$
\int_{\A/k} \varphi(g n(c) ) \, d c  = 0. 
$$

\subsection{Whittaker models}
Let $N$ be the unipotent radical of the standard Borel subgroup in $\GL_2$, i.e., the closed subgroup of lower triangler matrices. 
For a non-archimedean place $v$ of $k$, we let $\theta_v$ be a non-trivial character of $N(k_v)$. Define $C_{\theta_v}^\infty (\GL_2(k_v))$ 
to be the space of smooth complex valued functions on $\GL_2(k_v)$ satisfying 
$$
W(gn) = \theta_v(n) W(g)
$$
for all $n \in N(k_v), g \in \GL_2(k_v)$. For any irreducible admissible representation $\pi$ of $\GL_2(k_v)$ the intertwining space 
$$
{\rm Hom}_{\GL_2(k_v)}(\pi, C_{\theta_v}^\infty (\GL_2(k_v))) 
$$
is at most one dimensional; if the dimension is one, we say $\pi$ is generic, and we call the corresponding realization of $\pi$ as a space of $N$-quasi-invariant functions the Whittaker model of $\pi$. 

\

We recall some facts from \cite{Godement}, \S 16. Let $\pi$ be an unramified principal series representation 
$\pi = \mathrm{Ind}_B^G(\chi \otimes \chi^{-1})$, with $\chi$ unramified, where $B$ is the standard Borel subgroup of $G$. 
Then $\pi$ has a unique $K_v$ fixed vector. The image of this $K_v$-fixed vector in the Whittaker model, call it $W_\pi$, 
will be $K_v$-invariant on the left, and $N$-quasi-invariant on the right. 
By Iwasawa decomposition in order to calculate the values of $W_\pi$ it suffices to know the values of the function along the diagonal subgroup. We have 
\begin{align*}
W_\pi\begin{pmatrix} \varpi^m \\ & 1 \end{pmatrix} & = \begin{cases} q^{-m/2} \sum_{k=0}^m \chi(\varpi^k) \chi^{-1}(\varpi^{m-k}) & m \geq 0 ; \\ 
0 & m < 0
\end{cases}\\
& = \begin{cases} q^{-m/2} \frac{\chi(\varpi)^{m+1}- \chi(\varpi)^{-m-1}}{\chi(\varpi) - \chi(\varpi)^{-1}} & m \geq 0 ; \\ 
0 & m < 0.
\end{cases}
\end{align*}
Written compactly we have 
$$
W_\pi \begin{pmatrix} a \\ & 1 \end{pmatrix} = |a|^{1/2} \ch_\oo(a) 
\frac{\chi(\varpi) \chi(a) - \chi(\varpi)^{-1} \chi(a)^{-1}}{\chi(\varpi) - \chi(\varpi)^{-1}},
$$
where
\[
\ch_\oo(a) =
 \begin{cases}
  1 & \text{ if $a \in \oo$}\\
  0 & \text{ if $a \not\in \oo$}.
 \end{cases}
\]

Also by definition 
$$
W_\pi\left( g \begin{pmatrix} 1 \\ x & 1 \end{pmatrix} \right) = \psi_v(x) W_\pi(g),
$$
where $\psi_v : k_v \rightarrow \mathbb S^1$ is the standard additive character of $k_v$.

\begin{lem}
We have for $\Re s$ large, 
$$
\sum_{m=0}^\infty q^{m(1/2 - s)}W_\pi\begin{pmatrix} \varpi^m \\ & 1 \end{pmatrix} =  L(s, \pi) 
$$
where 
$$
L(s, \pi) := \frac{1}{(1- \chi(\varpi) q^{-s})(1- \chi^{-1}(\varpi) q^{-s})}. 
$$
\end{lem}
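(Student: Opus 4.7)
The statement to prove is a routine explicit geometric-series computation using the formulae for $W_\pi$ given just above the lemma. My plan is to substitute the closed form for the Whittaker values, swap the order of summation, and recognize a Cauchy product of two geometric series.

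Concretely, I would use the first expression given in the excerpt, namely
$$
W_\pi\begin{pmatrix} \varpi^m \\ & 1 \end{pmatrix} = q^{-m/2} \sum_{k=0}^{m} \chi(\varpi)^{k} \chi(\varpi)^{-(m-k)}.
$$
Plugging this into the left-hand side, the $q^{-m/2}$ factor cancels the $q^{m/2}$ in $q^{m(1/2-s)}$, leaving
$$
\sum_{m=0}^{\infty} q^{-ms} \sum_{k=0}^{m} \chi(\varpi)^{k}\chi(\varpi)^{-(m-k)}
= \sum_{m=0}^{\infty} \sum_{k=0}^{m} \bigl(\chi(\varpi) q^{-s}\bigr)^{k} \bigl(\chi(\varpi)^{-1} q^{-s}\bigr)^{m-k}.
$$
Reindexing with $j=m-k$ turns the double sum into the unrestricted product
$$
\left(\sum_{k=0}^{\infty} \bigl(\chi(\varpi) q^{-s}\bigr)^{k}\right)\!
\left(\sum_{j=0}^{\infty} \bigl(\chi(\varpi)^{-1} q^{-s}\bigr)^{j}\right)
= \frac{1}{\bigl(1-\chi(\varpi) q^{-s}\bigr)\bigl(1-\chi(\varpi)^{-1} q^{-s}\bigr)},
$$
which is $L(s,\pi)$.

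The only non-trivial point is convergence and the legitimacy of the rearrangement. Since $\chi$ is unramified unitary (or nearly so, with possibly some tempered/non-tempered contribution bounded by the Blomer--Brumley exponent invoked elsewhere in the paper), $|\chi(\varpi)^{\pm 1}| \le q^{\vartheta}$ for some $\vartheta < 1/2$, so both geometric series converge absolutely for $\Re s > \vartheta$, and in particular for $\Re s$ large; absolute convergence then justifies the change of order. No step here is delicate, and I would not expect any real obstacle; the lemma is essentially a bookkeeping identity recording the well-known fact that the Godement--Jacquet zeta integral of the spherical Whittaker function against $|\det|^{s-1/2}$ on the diagonal torus recovers the local $L$-factor.
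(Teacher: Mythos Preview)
Your argument is correct: substituting the explicit Whittaker values, cancelling the $q^{\pm m/2}$, and recognizing the Cauchy product of two geometric series is exactly the intended computation, and absolute convergence for $\Re s$ large justifies the rearrangement. The paper itself states this lemma without proof (it is a standard identity from the Godement--Jacquet theory cited there), so there is nothing to compare against; your remark about Blomer--Brumley bounds is unnecessary here since the statement only claims convergence for $\Re s$ large, but it does no harm.
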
 

\

There is also a theory of Whittaker models in the archimedean settings, and one can compute the value of the Whittaker functions for archimedean places. We will not need the explicit formulae in this work, but we will need the following result: 

\begin{lem}
    For each place $v$ and each Whittaker function coming $W_v$, the integral 
    $$
    \int_{k_v^\times} W_v \left(\begin{pmatrix}
        a \\ & 1 
    \end{pmatrix}\right) |a|^{s-\frac{1}{2}} \, d^\times a 
    $$
    converges absolutely for $\Re s$ large. 
\end{lem}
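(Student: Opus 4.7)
The plan is to treat the non-archimedean and archimedean places separately, and in each case reduce to standard asymptotic/support properties of Whittaker functions. In both settings, absolute convergence of the Mellin-type integral in $a$ is controlled by the behavior of $W_v(a(a,1))$ as $|a| \to \infty$ (which I expect to be rapid decay or even compact support) and as $|a| \to 0$ (which I expect to be at worst a fixed negative power of $|a|$).

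For a non-archimedean place $v$, I would first establish that $a \mapsto W_v(a(a,1))$ is supported in $\{|a| \leq q_v^M\}$ for some integer $M$ depending on $W_v$. This is the standard ``Kirillov boundedness'' argument: since $W_v$ is right-invariant under some compact open subgroup $K' \subset G_{k_v}$, and since $W_v(a(a,1) n(x)) = \psi_v(ax) W_v(a(a,1))$ for $n(x) \in N(k_v)$, averaging over $K' \cap N(k_v)$ shows that $W_v(a(a,1))$ vanishes once $|a|$ is large enough that $\psi_v$ is non-trivial on $a \cdot (K' \cap N(k_v))$. For the range $|a| \to 0$, the asymptotic expansion of $W_v$ in the Kirillov model yields a bound of the form $|W_v(a(a,1))| \leq C |a|^\sigma$ for constants $C,\sigma \in \mathbb{R}$ depending on $\pi$. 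The integral then reduces to a sum $\sum_m q_v^{m(1/2-s)} W_v(a(\varpi^m,1)) \cdot \mathrm{vol}(\oo_v^\times \varpi^m)$, which converges absolutely for $\Re s > \frac{1}{2} - \sigma$. (In the unramified case this is exactly the sum computed in the preceding lemma, convergent for $\Re s$ large and equal to $L(s,\pi)$.)

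For an archimedean place $v$, I would use that $W_v$ lies in a finite-dimensional module under the center of the universal enveloping algebra, and hence satisfies a linear ODE of finite order in the variable $a$ along the torus. Standard moderate-growth/Jacquet-integral techniques then imply that $W_v(a(a,1))$ decays faster than any polynomial in $|a|$ as $|a| \to \infty$ (in fact essentially like a $K$-Bessel function, giving exponential decay). As $|a| \to 0$, the indicial equation of the ODE forces $W_v(a(a,1))$ to be a finite sum of terms of the form $|a|^\alpha (\log |a|)^j$ with $\alpha \in \mathbb{C}$ and $j \in \mathbb{N}$; letting $\sigma_0 = \min \Re \alpha$, the integrand is bounded by $|a|^{\sigma_0 + \Re s - 1/2} (1 + |\log|a||)^J$ near $0$, which is integrable against $d^\times a$ provided $\Re s > \frac{1}{2} - \sigma_0$. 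Combined with the rapid decay at infinity, this gives the claimed convergence for $\Re s$ large.

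The main obstacle, such as it is, lies in the Kirillov/asymptotic bounds at both archimedean and non-archimedean places, where one needs to know that a single real exponent $\sigma$ (respectively $\sigma_0$) suffices to bound $W_v(a(a,1))$ as $|a| \to 0$. These are classical results from the Jacquet--Langlands theory \cite{JL} (and are recorded in accessible form in \cite{Gelbart, Godement}), but, since the bare statement we need is only absolute convergence for $\Re s$ large with implicit dependence on $\pi$, we do not need any explicit or uniform control on $\sigma$ and $\sigma_0$; only their finiteness, which is automatic.
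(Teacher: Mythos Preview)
Your proof sketch is correct and follows the standard route (compact support/rapid decay of $W_v(a(a,1))$ for $|a|$ large, and polynomial control from the Kirillov/asymptotic expansion for $|a|$ small). Note, however, that the paper itself does not give a proof of this lemma at all: it is simply recorded as a known fact from Jacquet--Langlands theory, with implicit reference to \cite{JL, Gelbart, Godement}. So there is no ``paper's proof'' to compare against; what you have written is a genuine proof where the paper just cites the literature.
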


Let us also recall the automorphic Fourier expansion \cite[P. 85]{Gelbart}. If $\phi$ is a cusp form on $\GL_2$ we have 
\begin{equation}\label{eq:whittaker}
\phi(g) = \sum_{\alpha \in k^\times} W_\phi\left( g \begin{pmatrix} \alpha \\ & 1 \end{pmatrix}\right) 
\end{equation}
with 
$$
W_\phi(g) = \int_{k \backslash \A} \phi \left(  g \begin{pmatrix} 1 & x \\ & 1 \end{pmatrix} \right) \psi^{-1}(x) \, \mathrm dx. 
$$

\subsection{Eisenstein series}\label{subsect:eisenstein}
By Iwasawa decomposition, any element of $\PGL_2(k_v)$ can be written as 
$$
g_v = u_v a_v n_v
$$
with $n_v \in N(k_v), a_v \in A(k_v), u_v \in U_v$. Define a function $\chi_{v, P}$ by 
$$
\chi_{v, P}: g_v = u_v a_v n_v \mapsto |a_v|_v
$$
where we have represented an element in $A(k_v)$ in the form 
$$
\begin{pmatrix} a_v \\ & 1 \end{pmatrix}. 
$$
We set 
$$
\chi_P : = \prod_v \chi_{v, P}. 
$$
We note that for $\gamma \in P(k)$, we have $\chi_P(g \gamma ) = \chi_P(g)$ for any $g \in G(\A)$. 
Define the Eisenstein series $E(\tau, g)$ by 
\begin{equation}\label{eis-series}
    E(\tau, g) = \sum_{\gamma \in G(k)/ P(k)} \chi(\tau, g\gamma) 
\end{equation}
where $\chi(\tau, g) := \chi_P(g)^{\tau+ 1/2}$.  For later reference we note the Fourier expansion of the Eisenstein series \cite[Equation 3.10]{Gelbart} in the following form: 
$$
E(\tau, g) = \chi_P(g)^{\tau+ 1/2} + \frac{\Lambda(2\tau)}{\Lambda(2\tau+1)} \chi_P(g)^{-\tau + 1/2} + \frac{1}{\zeta(2\tau+1)}\sum_{\alpha \in k^\times} W_\tau\left(g \begin{pmatrix} \alpha \\ & 1 \end{pmatrix}  \right). 
$$
Here $W_\tau((g_v)_v) = \prod_v W_{\tau, v}(g_v)$, where for $v < \infty$, $W_{\tau, v}$ is the normalized $K_v$-invariant Whittaker function for the induced representation $\mathrm{Ind}_P^G(|.|^\tau \otimes |.|^{-\tau})$, and for $v=\infty$, 
$$
W_{\tau, v}(g) = \int_{\R} \chi_{P,v}\left(w \begin{pmatrix} 1 & x \\ & 1 \end{pmatrix} g\right)^{\tau+ 1/2} e^{2 \pi i x} \, \mathrm dx, 
$$
where $ w = \begin{pmatrix} & 1 \\ -1 \end{pmatrix}$ is a representative for the longest element of the Weyl group. 
This integral converges when $\Re(\tau)$ is sufficiently large, and has an analytic continuation to an entire function of $\tau$. 
We also have the functional equation 
\begin{equation}\label{eis-funct}
E(\tau, g) = \frac{\Lambda(2\tau)}{\Lambda(1+ 2\tau)}E(-\tau, g),
\end{equation}
where $g \in G(\A)$.  

\subsection{Invariant measures}\label{subsect:measure}

In this section we recall several integration formulae from \cite{W} which we use throughout this paper. 

 Since by the theorem on page 70 of \cite{Cassels} the set $\A^1/k^\times$ is compact, we can pick a Haar measure on $\A^1$ such that the measure of $\A^1/k^\times$ is $1$. As in \cite[Page 508]{W}, we pick a measure $d^\times x$ on $\A^\times$ such that for each integrable function $\Phi$ on $\A^\times$
\begin{equation}\label{eq:int1}
\int_{\A^\times} \Phi(x) \, d^\times x = \int_0^\infty d^\times \lambda \int_{\A^1} \Phi(\uu{\lambda}x) \, d^\times x.   
\end{equation}
For any function $\delta$ on $\A^\times$ we have 
\begin{equation}\label{eq:int9}
\int_{\A^1} \delta(x) \, d^\times x = \int_{\A_f^\times} \int_{k_\infty^1} \delta(v \underline{|v|_\A}^{-1} w) \, d^\times w \, d^\times v. 
\end{equation}

 We define the invariant measure $db$ in $B_\A^0$ by
\begin{equation}\label{eq:int2}
\int_{B_\A^0} \phi(b) db  = \int_{(\A^1)^2} d^\times t_1 \, d^\times t_2\, \int_\A dc \,  \phi(a(t_1,t_2)n(c)).
\end{equation}
The group $B_k$ is discrete in $B_\A^0$. The measure on $B_\A^0/B_k$ is given by 
\begin{equation}\label{eq:int3}
\int_{B_\A^0/B_k}\phi(b)\, db =\int_{(\A^1/k^\times)^2} d^\times t_1 \, d^\times t_2 \, \int_{\A/k} dc \, \phi(a(t_1,t_2)n(c)).
\end{equation}
The measure of $B_\A^0/B_k$ with respect to $db$ is 1. The right-invariant measure $db$ on $B_\A^1$ is given by 
\begin{equation}\label{eq:int4}
\int_{B_\A^1}\phi(b) \, db =\int_0^\infty d^\times\lambda\int_{B_\A^0}\phi(ba(\uu{\lambda})) db
\end{equation}
$$
=\int_0^\infty d^\times\lambda\int_{B_\A^0}\lambda^{-2} \phi(a(\uu{\lambda})b)\ db. 
$$
The right-invariant measure $db$ on $B_\A$ is determined as follows:
\begin{equation}\label{eq:int5}
\int_{B_\A} \phi(b) \, db=\int_0^\infty d^\times\lambda \int_{B_\A^1} \phi(d(\uu{\lambda})b) \, db.
\end{equation}

The group $U_\A$ is unimodular, and we choose the invariant measure $dk$ on $U_\A$ to be that for which $U_\A$ has measure $1$. The invariant measure $dg$ on $G_\A$ is given by:
\begin{equation}\label{eq:int6}
\int_{G_\A} \phi(g)\, dg=\int_{U_\A}\, dk \int_{B_\A} \, db \, \phi(kb).
\end{equation}
Similarly, we define the invariant measure $dg$ in $G_\A^1$ by:
\begin{equation}\label{eq:int7}
\int_{G_\A^1}\phi(g) \, dg =\int_{U_\A} dk \, \int_{B_\A^1} db \, \phi(kb).
\end{equation}
These measures are related as follows:
\begin{equation}\label{eq:int8}
\int_{G_\A}\phi(g) \, dg =\int_0^\infty d^\times \lambda\int_{G_\A^1}\phi(d(\uu{\lambda})g) \, dg.
\end{equation}

\section{Binary cubic forms}\label{sect:binary}

Let $V$ denote affine four-dimensional space and $G$ denote $\GL_2$.  A point $x=(x_1,x_2,x_3,x_4)\in V$ is identified with the binary cubic form given by:
$$
F_x(u,v)=x_1u^3+x_2u^2v+x_3uv^2+x_4v^3.
$$
Define the action $g\cdot x$ of an element $g=\begin{pmatrix}a&b\\c&d\end{pmatrix}\in G$ on $x\in V$ by the equation
\begin{equation}\label{eq:action}
F_{g\cdot x}(u,v)=(\det(g))^{-1}F_x\left((u,v)\begin{pmatrix}a&b\\c&d\end{pmatrix}\right),
\end{equation}
where $\det(g)=ad-bc$. Note that  $\begin{pmatrix}a&0\\0&a\end{pmatrix}\cdot x=ax$.

A nonzero rational function $R(x)$ on $V$ is called a relative $G$-invariant if there is a character $\chi$ of $G$ such that $R(g\cdot x)=\chi(g)R(x)$ for all $x\in V$ and $g\in G$. The group of relative $G$-invariants has the generator
$$
P(x)=x_2^2x_3^2+18x_1x_2x_3x_4-4x_2^3x_4-4x_1x_3^3-27x_1^2x_4^2
$$
with the associated character $\det(g)^2$. The hypersurface $S=\{x\in V\ |\ P(x)=0\}$ is invariant under $G$.

The follwoing proposition describes the orbits of the action over an arbitrary field $K$: 
\begin{prop}[Proposition 2.1 of \cite{W}]
    The $G_K$ orbits in $V_K$ are as follows: 
    \begin{enumerate}
        \item $S_0 = \{0\}$; 
        \item $S_{1, K} = \{ x \in V_K \setminus \{0\} \text{ with triple root}\}$; 
        \item $S_{2, K} = \{ x \in V_K \text{ with double root and a distinct single root}\}$; and, 
        \item $V'_K = \{ x \in V_K \mid P(x) \ne 0, K(x) = K'\}$ for each Galois extension $K'/K$ satisfying $[K':K] \leq 3$ or $\Gal(K'/K) \simeq \text{ Sym}_3.$
    \end{enumerate}
\end{prop}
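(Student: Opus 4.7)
The plan is to classify by the factorization pattern of the binary cubic $F_x(u,v)$ over an algebraic closure $\bar K$ and then, in the nonsingular case, by the Galois action on the root set. Observe first that the action \eqref{eq:action} permutes the zero locus of $F_x$ in $\mathbb{P}^1(\bar K)$ by a $\PGL_2(K)$-transformation (accompanied by a scalar), so the multi-set of root multiplicities of $F_x$, as well as the Galois structure on that multi-set, is a $G_K$-invariant. Hence over $\bar K$ there are four possibilities: (i) $F_x \equiv 0$; (ii) $F_x$ has a single triple root; (iii) $F_x$ has a double root and a distinct simple root; (iv) $F_x$ has three distinct roots. These match $S_0$, $S_{1,K}$, $S_{2,K}$ and $V'_K$ respectively.

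For the singular orbits I would argue that the distinguished roots are automatically $K$-rational: in case (ii) the triple root is the unique root and therefore $\mathrm{Gal}(\bar K/K)$-fixed, while in case (iii) the double root is singled out by its multiplicity and hence $K$-rational, forcing the simple root to be $K$-rational as well. Using the $2$-transitivity of $\PGL_2(K)$ on $\mathbb{P}^1(K)$, move the distinguished roots to $[1:0]$ (and, in case (iii), the simple root to $[0:1]$), reducing $F_x$ to $c v^3$ or $c u v^2$. A direct computation with $g = a(t_1,t_2)$ shows $g \cdot (cv^3)$ has coefficient $(t_2^2 t_1^{-1})c$ and $g \cdot (cuv^2)$ has coefficient $(t_1 t_2)^{-1} t_1 t_2^2 \cdot c$, both of which sweep out all of $K^\times$ as $t_1,t_2$ vary. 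This gives transitivity of $G_K$ on $S_{1,K}$ and on $S_{2,K}$.

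For the nonsingular case I would show that the orbit is classified by the Galois-set structure of the three projective roots, or equivalently by the étale $K$-algebra $K[u]/(F_x(u,1))$ (with the usual convention for a root at infinity). Given $F_x, F_y \in V'_K$ whose root sets in $\mathbb{P}^1(\bar K)$ are Galois-equivariantly isomorphic, the bijection between them determines a unique element $h \in \PGL_2(\bar K)$ sending one set to the other; because the bijection is Galois-equivariant, $h$ is Galois-fixed and hence lies in $\PGL_2(K)$. Lift $h$ to $g \in G_K$; then $g \cdot x$ and $y$ define the same projective root configuration, so they agree up to a scalar in $K^\times$, and the scalar is absorbed by a further diagonal adjustment exactly as in the singular case. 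The isomorphism class of the étale algebra, equivalently the Galois closure $K'/K$ of the field generated by any one root, lies in the list $\{K, \text{quadratic}, C_3\text{-cubic}, S_3\text{-sextic}\}$ — the four transitive and intransitive subgroup actions of $\mathrm{Gal}(\bar K/K)$ on a $3$-element set — which is exactly the enumeration in the proposition.

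The main obstacle is the surjectivity step: every such $K'$ must actually be realized by some nonsingular form. I would address it by exhibiting explicit models: $F_x = uv(u+v)$ for $K' = K$; $F_x = v(u^2 - Du^2)$ (appropriately cubed, or rather combined with another linear factor — more precisely $F_x = u(u^2 - Dv^2)$) for a quadratic $K' = K(\sqrt{D})$; and for a cubic $K' = K(\theta)$ with minimal polynomial $T^3 + aT^2 + bT + c$, the form $F_x(u,v) = u^3 + au^2v + buv^2 + cv^3$, whose discriminant matches that of $K'/K$ and whose Galois type is cyclic or $S_3$ according as $\mathrm{disc}(K'/K)$ is or is not a square. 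The minor bookkeeping here — checking that each such $F_x$ is nonsingular and that the resulting Galois action on roots has the prescribed type — is routine once one recalls the standard correspondence between binary cubic forms and cubic rings.
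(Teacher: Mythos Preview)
The paper does not prove this proposition; it is quoted as Proposition~2.1 of Wright~\cite{W} and used as a black box. Your argument is the standard one and is correct: classify by the multiset of root multiplicities, then within the nonsingular locus by the $\Gal(\bar K/K)$-set structure on the three projective roots, using sharp $3$-transitivity of $\PGL_2$ on $\PP^1$ together with Hilbert~90 to descend the projective transformation to $K$.

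One point deserves to be made explicit. You assert that the nonsingular orbit invariant is the splitting field $K'$, but what your argument actually proves is that the invariant is the isomorphism class of the $3$-element Galois set (equivalently, the \'etale cubic $K$-algebra). These coincide here only because for each of the four possible groups $\{1,\ \Z/2\Z,\ \Z/3\Z,\ S_3\}$ there is, up to isomorphism, a \emph{unique} faithful action on a three-element set; this is a one-line case check, but it is the reason the classification collapses to the field $K'$ rather than to the finer \'etale-algebra datum, and you should say so. The typo in the quadratic model and the unsimplified diagonal computation for $S_{2,K}$ are cosmetic.
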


\begin{prop}[Corollary to Proposition 2.2 of \cite{W}]\label{prop:singular} We have\begin{enumerate}
    \item $S_{1, K} = \bigsqcup_{\alpha \in K^\times} (G_K/B_K) \cdot (0,0,0, \alpha)$; 
    \item $S_{2, K} = \bigsqcup_{\alpha \in K^\times} (G_K/ A_K) \cdot (0, 0, \alpha, 0)$. 
\end{enumerate}
    
\end{prop}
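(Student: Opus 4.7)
The plan is to exploit the fact that the repeated-root structure of a binary cubic form is a Galois invariant, and then to work out the $G_K$-orbits and stabilizers of the proposed representatives by direct application of the action formula \eqref{eq:action}. To begin, I would argue Galois descent: for $F \in S_{1,K}$ the unique triple root in $\PP^1(\bar K)$ is $\Gal(\bar K/K)$-stable and hence lies in $\PP^1(K)$, so $F = \beta(qu-pv)^3$ for some $\beta \in K^\times$ and $(p:q) \in \PP^1(K)$; similarly, for $F \in S_{2,K}$ the double root and the distinct simple root are each $\Gal(\bar K/K)$-stable (as the unique root of multiplicity $2$ and the unique root of multiplicity $1$, respectively) and so lie in $\PP^1(K)$, whence $F = \beta(q_1 u - p_1 v)(q_2 u - p_2 v)^2$.

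For part (1), with $x_0 = (0,0,0,\alpha)$ a direct computation from \eqref{eq:action} gives
\[
F_{g \cdot x_0}(u,v) = \det(g)^{-1}\alpha\, (bu+dv)^3, \qquad g = \begin{pmatrix} a & b \\ c & d\end{pmatrix}.
\]
Matching this against an arbitrary $\beta(qu-pv)^3$ by setting $(b:d) = (q:-p)$ and adjusting $\alpha$ shows that $(g,\alpha) \mapsto g \cdot x_0$ surjects $G_K \times K^\times$ onto $S_{1,K}$. The stabilizer of $x_0$ is then read off from $(bu+dv)^3 = \det(g)\,v^3$ as
\[
H_1 = \left\{\begin{pmatrix} d^2 & 0 \\ c & d\end{pmatrix} : d \in K^\times,\; c \in K\right\} \subset B_K,
\]
and $B_K/H_1 \cong K^\times$ via $\begin{pmatrix} a & 0 \\ c & d\end{pmatrix}H_1 \mapsto d^2/a$, the $B_K$-action on $x_0$ moving the scalar parameter through all of $K^\times$. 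Consequently the single $G_K$-orbit $G_K/H_1 = S_{1,K}$ fibers over $G_K/B_K \cong \PP^1(K)$ (recording the triple root) with fiber $K^\times$ (recording $\alpha$), which is precisely the asserted decomposition.

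Part (2) is handled in parallel. Setting $y_0 = (0,0,\alpha,0)$, one computes
\[
F_{g \cdot y_0}(u,v) = \det(g)^{-1}\alpha\,(au+cv)(bu+dv)^2,
\]
and surjectivity of $(g,\alpha) \mapsto g \cdot y_0$ onto $S_{2,K}$ follows by matching the simple and double linear factors separately. The stabilizer equation $(au+cv)(bu+dv)^2 = \det(g)\,uv^2$ forces $b=0$ from the double factor and then $c=0$ from the simple factor, yielding
\[
H_2 = \left\{\begin{pmatrix} a & 0 \\ 0 & 1\end{pmatrix} : a \in K^\times\right\} \subset A_K,
\]
with $A_K/H_2 \cong K^\times$ absorbing the scaling freedom $\alpha \mapsto \alpha d$. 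Since $G_K/A_K$ parametrizes ordered pairs of distinct points in $\PP^1(K)$ (double root, simple root), one obtains the claimed $K^\times$-bundle description of $S_{2,K}$. The argument reduces to routine linear-algebra bookkeeping once the Galois-descent observation and the explicit action formula are in place; the only mild subtlety is the verification that the multiplicity data separates each root uniquely so that each is individually $K$-rational, which is what makes the reduction to the $K^\times$-parametrized representatives canonical.
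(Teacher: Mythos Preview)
Your argument is correct. The paper does not give its own proof here: the proposition is simply quoted as a corollary of Proposition~2.2 in Wright~\cite{W}. Your approach---Galois invariance of the multiplicity data to force $K$-rationality of the repeated and simple roots, followed by the explicit computation of the stabilizers $H_1\subset B_K$ and $H_2\subset A_K$ from the action formula~\eqref{eq:action}, together with the identifications $B_K/H_1\cong K^\times$ and $A_K/H_2\cong K^\times$ absorbing the scalar parameter---is the standard argument and is essentially what one finds in Wright. The only point you might make more explicit is that the disjoint-union notation in the statement is to be read as asserting a bijection from $(G_K/B_K)\times K^\times$ onto $S_{1,K}$ (and likewise for $S_{2,K}$), which is exactly how the paper uses it when unfolding the sums over $S_k$ in \S\ref{sect:I} and \S\ref{sect:J}; your stabilizer computation is precisely what guarantees that the ambiguity in the choice of coset representative is absorbed by a reparametrization of $\alpha$, so the fibration you describe is indeed a product.
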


In this work we will be considering the space $V$ over a number field $k$. In particular we will need  integration on the locally compact space $V_\A$, where, as before, $\A$ is the adele ring of $k$. Since $V_\A$ is naturally identified with $\A^4$, it makes sense to talk about Schwartz functions on $V_\A$. We will need to discuss the Fourier transforms of such functions. The alternating form
$$
[x,y]=x_1y_4-\frac{1}{3}x_2y_3+\frac{1}{3}x_3y_2-x_4y_1,
$$
has the property that $[g\cdot x, g^\iota\cdot y]=[x,y]$ for all $x,y\in V$ and $g\in G$, where $g^\iota=(\det(g))^{-1}g$. The Haar measure $dx$ on $V_\A$ is chosen to be self-dual with respect to the bicharacter $\left<[,]\right>$, i.e., if we define the Fourier transform of $\Phi\in\mathscr{S}(V_\A)$ by
$$
\hat{\Phi}(y)=\int_{V_\A}\Phi(x)\left<[x,y]\right>|dx|_\A,
$$
then $\hat{\Phi}\in \mathscr{S}(V_\A)$ and
$$
\Phi(x)=\int_{V_\A}\hat{\Phi}(y)\left<[y,x]\right>|dy|_\A.
$$
One can check that 
$$
dx=dx_1 \, dx_2 \, dx_3 \, dx_4,
$$
in terms of the coordinate measures, locally and globally. Moreover, $V_\A/V_k$ has measure $1$ with respect to $dx$. 
Poisson Summation Formula says 
$$
\sum_{x\in V_k} \Phi(x)=\sum_{x\in V_k}\hat{\Phi}(x),
$$
for all $\Phi\in \mathscr S(V_\A)$.

A computation involving Jacobians shows $d(g\cdot x)=|\det(g)|_\A^2 \, dx $ for any $g\in G_\A$.  Define $(g\cdot \Phi)(x)=\Phi(g^{-1}\cdot x)$ for any $g\in G$, $x\in V$, and function $\Phi$ on $V$. Then we have 
\begin{equation}\label{eq:fourier}
\widehat{g\cdot\Phi}=|\det(g)|_\A^2g^\iota\cdot\hat{\Phi},
\end{equation}
for all $g\in G_\A$ and $\Phi\in\mathscr{S}(V_\A)$.

\section{Analysis of the zeta function}\label{sect:zeta}

\subsection{The zeta function}
 Fix a Schwartz function $\Phi$ on $\A^4$. For an automorphic form $\varphi$ on $\GL_2$ over $k$ and $s \in \C$ we set 
$$
Z(s, \Phi, \varphi) = 
\int_{G_\A / G_k} |\det g|^s \varphi(g) \sum_{x \in V_k'} \Phi(g \cdot x) \, dg. 
$$

For the remainder of this section let $\varphi$ be an arbitrary automorphic form.  Set 
$$
Z_+(s, \Phi, \varphi) = \int_{|\det g| \geq 1} |\det g|^s \varphi(g) \sum_{x \in V_k'} \Phi(g \cdot x) \, dg. 
$$
\begin{lem}\label{lem:6}
The function $Z_+(s, \Phi, \varphi)$ has an analytic continuation to a holomorphic function on the whole complex plane. 
\end{lem}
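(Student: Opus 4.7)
The plan is to show that the integrand decays faster than any polynomial in $|\det g|$ as $|\det g| \to \infty$ in the region $|\det g| \geq 1$, so the integral is absolutely convergent locally uniformly for every $s \in \mathbb{C}$ and therefore defines an entire function.

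First I would use Equation \eqref{eq:int8} to peel off the scalar part of $g$: writing $g = d(\uu\lambda) g'$ with $\lambda \in \R_{>0}$ and $g' \in G_\A^1$, the condition $|\det g| \geq 1$ becomes $\lambda \geq 1$, and since $d(\uu\lambda)$ acts on $V$ as multiplication by $\uu\lambda$ (and $V_k'$, $\varphi$, and the measure are $G_k$-invariant) one obtains
$$
Z_+(s, \Phi, \varphi) = \int_1^\infty \lambda^{2s} \, d^\times \lambda \int_{G_\A^1/G_k} \varphi(d(\uu\lambda) g') \sum_{x \in V_k'} \Phi(\uu\lambda \cdot g' \cdot x) \, dg'.
$$

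Next I would restrict the inner integral to a Siegel set $\Omega$ for $G_k$ inside $G_\A^1$, so that every $g' \in \Omega$ can be written $g' = k \, a(t_1, t_2) n(c)$ with $|t_1/t_2|_\A$ bounded below and $c$ in a fixed compact subset of $\A$. Two classical ingredients then combine. Since $\varphi$ is an automorphic form it is of moderate growth on $\Omega$, so $|\varphi(d(\uu\lambda) g')|$ is bounded by a polynomial in $\lambda$ and $|t_1/t_2|_\A$. And since $\Phi \in \mathscr{S}(V_\A)$ and $V_k' \subset V_k \setminus \{0\}$, a standard theta-series bound, analogous to the fundamental estimate of Wright \cite[\S 4]{W}, gives for every integer $N \geq 0$ a function $\Psi_N$, integrable on $\Omega$, with
$$
\sum_{x \in V_k'} |\Phi(\uu\lambda \cdot g' \cdot x)| \leq \lambda^{-N} \, \Psi_N(g'), \qquad g' \in \Omega,\ \lambda \geq 1.
$$
The rapid decay in $\lambda$ reflects the fact that the adelic lattice $\uu\lambda g' \cdot V_k$ has its shortest nonzero vector growing like a positive power of $\lambda$ on $\Omega$, and the Schwartz decay of $\Phi$ then yields arbitrary polynomial savings.

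Combining these two estimates, for any compact set $K \subset \mathbb{C}$ one may pick $N$ large so that the full integrand is majorized on $\{\lambda \geq 1\} \times \Omega$ by $\lambda^{2\Re s - N}$ times an integrable function, yielding absolute convergence locally uniformly in $s \in K$ and hence holomorphy on all of $\mathbb{C}$. The main obstacle is the uniform rapid-decay estimate on the theta sum over $\{\lambda \geq 1\} \times \Omega$; this is essentially contained in Wright's analysis and is actually easier here than in the full Shintani zeta function, because on the region $\lambda \geq 1$ one only needs a crude bound over $V_k \setminus \{0\}$, with no identification of main terms.
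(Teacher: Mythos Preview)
Your argument is correct and is essentially the content of Wright's Proposition~6.1, which is all the paper invokes: the paper's proof is the single sentence ``This result is a consequence of Proposition~6.1 of \cite{W}.'' One small simplification you could make: since $\varphi$ has trivial central character, $\varphi(d(\uu\lambda)g') = \varphi(g')$, so the moderate-growth bound in $\lambda$ is unnecessary and only the growth in $t(g')$ on the Siegel set matters.
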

\begin{proof}
This result is a consequence of Proposition 6.1 of \cite{W}. 
\end{proof}

We have 
$$
Z(s, \Phi, \varphi) + \int_{G_\A/G_k} |\det g|^s \varphi(g) \sum_{x \in S_k} \Phi(gx) \, dg 
$$
$$
= \int_{G_\A/G_k} |\det g|^s \varphi(g) \sum_{x \in V_k} \Phi(gx) \, dg
$$
$$
= Z_+(s, \Phi, \varphi) + \int_{|\det g|_\A \geq 1} |\det g|^s \varphi(g) \sum_{x \in S_k} \Phi(gx) \, dg
$$
$$
+ \int_{|\det g|_\A \leq 1} |\det g|^s \varphi(g) \sum_{x \in V_k} \Phi(gx) \, dg.
$$
For $g \in \GL_2$, set $g^\iota = (\det g)^{-1} g$. Also, for $g \in \GL_2(\A)$, $\Phi \in \calS(V_\A)$, 
$$
g \cdot \Phi(x) = \Phi(g^{-1} \cdot x). 
$$
Then by Equation \eqref{eq:fourier} we have $\widehat{g \cdot \Phi} = |\det g|^2 g^\iota \cdot \widehat{\Phi}$. Poisson summation formula then tell us 
$$
\sum_{x \in V_k} \Phi(g \cdot x) = |\det g|^{-2} \sum_{x \in V_k} \widehat\Phi(g^\iota \cdot x). 
$$
This means 
$$
Z(s, \Phi, \varphi) + \int_{|\det g|_\A \leq 1} |\det g|^s \varphi(g) \sum_{x \in S_k} \Phi(gx) \, dg 
$$
$$
= Z_+(s, \Phi, \varphi) + \int_{|\det g|_\A \leq 1} |\det g|^s \varphi(g) \sum_{x \in V_k} \Phi(gx) \, dg
$$
$$
= Z_+(s, \Phi, \varphi) +  \int_{|\det g|_\A \leq 1} |\det g|^s \varphi(g)  |\det g|^{-2} \sum_{x \in V_k} \widehat\Phi(g^\iota \cdot x) \, dg
$$
$$
= Z_+(s, \Phi, \varphi) +  \int_{|\det g|_\A \leq 1} |\det g|^s \varphi(g)  |\det g|^{-2} \sum_{x \in V'_k} \widehat\Phi(g^\iota \cdot x) \, dg  
$$
$$
+\int_{|\det g|_\A \leq 1} |\det g|^s \varphi(g)  |\det g|^{-2} \sum_{x \in S_k} \widehat\Phi(g^\iota \cdot x) \, dg
$$
$$
= Z_+(s, \Phi, \varphi) +  \int_{|\det g|_\A \geq 1} |\det g|^{-s} |\det g|^{2} \varphi(g)   \sum_{x \in V'_k} \widehat\Phi(g \cdot x) \, dg  
$$
$$
+\int_{|\det g|_\A \leq 1} |\det g|^s |\det g|^{-2}\varphi(g)   \sum_{x \in S_k} \widehat\Phi(g^\iota \cdot x) \, dg
$$
Here we have used the fact that since $\varphi$ has trivial central character, $\varphi (g) = \varphi(g^\iota)$. 
Rearranging terms gives the following lemma: 
\begin{lem}\label{lem:7} For $\Re s$ large,  
$$
Z(s, \Phi, \varphi) = Z_+(s, \Phi, \varphi) + Z_+(2-s. \widehat\Phi, \varphi) + I(s-2, \widehat\Phi, \varphi) - J(s, \Phi, \varphi)
$$

for an automorphic form $\varphi$ and a Schwartz function $\Phi$, where we have set 
$$
I(s, \Phi, \varphi) =  \int_{|\det g| \leq 1} |\det g|^s \varphi(g) \sum_{x \in S_k} \Phi(g^\iota \cdot x)  dg, 
$$ 
and 
$$J(s, \Phi, \varphi) =  \int_{|\det g| \leq 1} |\det g|^s \varphi(g) \sum_{x \in S_k} \Phi(g \cdot x)  dg. 
$$
\end{lem}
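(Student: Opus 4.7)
The plan is to derive the splitting identity by starting from $Z(s, \Phi, \varphi)$, adding and subtracting the contribution from the singular set $S_k$, and then using Poisson summation to swap $\Phi$ for $\widehat\Phi$ on the low-determinant half of the integral. Concretely, I would first write
$$
Z(s, \Phi, \varphi) + \int_{G_\A / G_k} |\det g|^s \varphi(g) \sum_{x \in S_k} \Phi(g \cdot x) \, dg = \int_{G_\A / G_k} |\det g|^s \varphi(g) \sum_{x \in V_k} \Phi(g \cdot x) \, dg,
$$
and then break the right-hand side at $|\det g|_\A = 1$. The part with $|\det g|_\A \geq 1$ naturally splits off a $Z_+(s, \Phi, \varphi)$ contribution together with the singular-sum piece over $|\det g|_\A \geq 1$.

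Next I would focus on the $|\det g|_\A \le 1$ piece. Here I would invoke Poisson summation, using Equation \eqref{eq:fourier}, in the form
$$
\sum_{x \in V_k} \Phi(g \cdot x) = |\det g|^{-2} \sum_{x \in V_k} \widehat\Phi(g^\iota \cdot x),
$$
which is legitimate because $V_\A/V_k$ has measure $1$ and the alternating form $[\,,\,]$ pairs the two actions correctly. Then I would split the resulting sum as $V_k = V_k' \sqcup S_k$, isolating the singular piece (which contributes to $I(s-2, \widehat\Phi, \varphi)$ after a change of variables) and the regular piece (which should yield $Z_+(2-s, \widehat\Phi, \varphi)$).

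To convert the regular part to $Z_+$, I would perform the substitution $g \mapsto g^\iota$; because $\varphi$ has trivial central character we have $\varphi(g) = \varphi(g^\iota)$, and the change $|\det g|_\A \le 1 \mapsto |\det g^\iota|_\A \ge 1$ flips the domain. Combined with the factor $|\det g|^{s-2}$ becoming $|\det g|^{2-s}$ after the substitution, this produces exactly $Z_+(2-s, \widehat\Phi, \varphi)$. For the singular piece in the $|\det g|_\A \ge 1$ region (which, after substitution, becomes an integral over $|\det g|_\A \le 1$ matching $J(s, \Phi, \varphi)$), and for the singular piece in the $|\det g|_\A \le 1$ region (which matches $I(s-2, \widehat\Phi, \varphi)$ up to the $|\det g|^{-2}$ twist), I would collect the terms and rearrange.

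The main thing to be careful about is bookkeeping: tracking which singular integrals are evaluated against $\Phi$ vs.\ $\widehat\Phi$, against $g$ vs.\ $g^\iota$, and over $|\det g|_\A \le 1$ vs.\ $|\det g|_\A \ge 1$, and verifying that after cancelling the added-and-subtracted singular term and applying the substitution $g \mapsto g^\iota$ to any residual $|\det g|_\A \ge 1$ singular integral, exactly the combination $+ I(s-2, \widehat\Phi, \varphi) - J(s, \Phi, \varphi)$ remains. Absolute convergence in the region $\Re s$ large is what allows each swap, split, and change of variable to be justified termwise, so the only real obstacle is this clerical rearrangement; no new analytic input is required beyond what has already been used.
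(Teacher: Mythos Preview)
Your proposal is correct and follows essentially the same route as the paper: add the singular sum, split at $|\det g|_\A = 1$, apply Poisson summation via Equation~\eqref{eq:fourier} on the $|\det g|_\A \le 1$ piece, separate $V_k = V_k' \sqcup S_k$, and use $\varphi(g) = \varphi(g^\iota)$ together with $g \mapsto g^\iota$ on the regular part to produce $Z_+(2-s, \widehat\Phi, \varphi)$. One minor simplification to your bookkeeping: the $|\det g|_\A \ge 1$ singular piece on the right cancels directly with the $|\det g|_\A \ge 1$ portion of the added singular integral on the left (no substitution needed), leaving on the left precisely the $|\det g|_\A \le 1$ singular integral, which is $J(s, \Phi, \varphi)$.
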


\section{Cusp forms}\label{sect:cusp}

\subsection{The function $I(s, \Phi, \varphi)$ for $\varphi$ a cusp form}\label{sect:I}

Throughout this section we fix an automorphic form $\varphi$ which lives in an infinite dimensional automorphic representation $\pi = \otimes_v' \pi_v$ of $\GL_2$. We let $W_\varphi$ be the global Whittaker function associated to $\varphi$, c.f. Equation \eqref{eq:whittaker}. Without loss of generality, we assume that $W_\varphi$ decomposes as $\otimes_v W_v$ where for almost all $v$, $W_v$ is the normalized Whittaker function associated to $\pi_v$.\footnote{If an automorphic form is $U_\A$-finite, then it is a finite linear combination of automorphic forms satisfying this assumption.}

In this section we will prove the following theorem: 

\begin{thm}\label{thm:cuspI}
    If $\varphi$ is a cusp form, the function $I(s, \Phi, \varphi)$ converges absolutely and uniformly for $\Re s \gg 0$. This function has an analytic continuation to an entire function. 
\end{thm}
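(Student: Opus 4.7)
The plan is to split $S_k = \{0\} \sqcup S_{1,k} \sqcup S_{2,k}$ via Proposition~\ref{prop:singular} and analyze the three orbit contributions separately: the first two will vanish by cuspidality of $\varphi$, and the third will be reduced to the integral $Z_I(s,f,\varphi)$ controlled by Lemma~\ref{lem:fundamental}. Absolute convergence for $\Re s$ large should follow from the boundedness (indeed rapid decay) of cusp forms combined with standard Wright-type estimates on $\sum_{x \in S_k}|\Phi(g^\iota \cdot x)|$; this also justifies all interchanges of sums and integrals below. For the trivial orbit $x=0$, the contribution is $\Phi(0)\int_{|\det g|\le 1,\,G_\A/G_k} |\det g|^s \varphi(g)\,dg$, which via \eqref{eq:int8} and the trivial central character factors as $\Phi(0)\int_0^1 \lambda^{2s}\,d^\times\lambda \cdot \int_{G_\A^1/G_k}\varphi(g_0)\,dg_0$; the inner integral vanishes by cuspidality (after Iwasawa, integrate $\int_{N_\A/N_k}\varphi(\cdot\, n)\,dn = 0$ first, or equivalently appeal to orthogonality to the constant function).

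For $S_{1,k}$, I unfold the sum $\sum_{\gamma \in G_k/B_k}$ against $\int_{G_\A/G_k}$ (using that $\iota\colon G \to G$ is a group homomorphism preserving $B$) to obtain $\sum_{\alpha\in k^\times}\int_{G_\A/B_k} |\det g|^s\varphi(g)\Phi(g^\iota\cdot(0,0,0,\alpha))\,dg$. A direct computation with the action~\eqref{eq:action} shows that $n(c)$ fixes $(0,0,0,\alpha)$, so in Iwasawa coordinates $g = k\,a(t_1,t_2)\,n(c)$ the test function is independent of $c$, and the $c$-integration reduces to $\int_{N_\A/N_k}\varphi(ka(t_1,t_2)n)\,dn$, which vanishes by cuspidality. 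Hence the $S_{1,k}$ piece is identically zero.

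The substantive work is in $S_{2,k}$. The same unfolding gives $\sum_{\alpha\in k^\times}\int_{G_\A/A_k} |\det g|^s\varphi(g)\Phi(g^\iota\cdot(0,0,\alpha,0))\,dg$. A short computation with~\eqref{eq:action} yields $(a(t_1,t_2)n(c))^\iota\cdot(0,0,\alpha,0) = (0,0,\alpha t_1^{-1},\alpha c t_1^{-2}t_2)$, depending nontrivially on $c$, so the preceding trick fails. The plan is instead to insert the Whittaker expansion~\eqref{eq:whittaker} of $\varphi$ and combine its sum with $\sum_{\alpha \in k^\times}$ via a standard Rankin--Selberg-style unfolding: $\alpha$ is absorbed into the diagonal variable, collapsing the quotient $A_\A/A_k$ to $A_\A$. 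After separating the scalar determinant coordinate via \eqref{eq:int8} (so that $\lambda = |\det g|^{1/2}\le 1$ becomes the $z$-variable in $Z_I$) and, for each fixed $k \in U_\A$, recognizing the Schwartz function $f_k(X,Y) := \Phi(k^\iota\cdot(0,0,X,Y))$ on $\A^2$, the resulting integrand matches the definition of $Z_I(s,f_k,\varphi)$.

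Lemma~\ref{lem:fundamental} then yields the entire analytic continuation of $Z_I(s,f_k,\varphi)$ uniformly in $k$, and integrating over the compact group $U_\A$ preserves this, finishing the proof. The main technical obstacle is the final matching step: carrying out the simultaneous change of variables that absorbs $\alpha$ (so that $\alpha t_1^{-1}$ and $\alpha c t_1^{-2}t_2$ become the clean $\A^\times$- and $\A^1$-variables appearing in $Z_I$), tracking Jacobians between $d^\times t_1\,dc$ and the measures in $Z_I$, and verifying that $f_k$ is genuinely Schwartz on $\A^2$ with enough uniformity in $k$ to legitimately integrate Lemma~\ref{lem:fundamental} over $U_\A$.
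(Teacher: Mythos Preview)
Your overall strategy matches the paper's: split $S_k$ via Proposition~\ref{prop:singular}, kill $S_{0,k}$ and $S_{1,k}$ by cuspidality, and reduce the $S_{2,k}$ contribution to $Z_I$ so that Lemma~\ref{lem:fundamental} applies. The handling of $S_{0,k}$ and $S_{1,k}$ is fine.

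There is, however, a concrete gap in your matching step for $S_{2,k}$. You propose that after the change of variables the two arguments $\alpha t_1^{-1}$ and $\alpha c\,t_1^{-2}t_2$ become, respectively, the $\A^\times$- and the $\A^1$-variables in $Z_I$, and that the relevant Schwartz function is $f_k(X,Y)=\Phi(k^\iota\cdot(0,0,X,Y))$. This cannot work: the $c$-integral is over the \emph{additive} group $\A$, while $Z_I$ contains only multiplicative integrals over $\A^\times$ and $\A^1$. What actually happens is different. After collapsing the $\alpha$-sum with a torus variable and inserting the Whittaker expansion, one commutes $n(c)$ to the right of the diagonal in the argument of $W_\varphi$; the $N$-quasi-invariance of $W_\varphi$ then produces an additive character $\psi(c\cdot(\text{torus data}))$. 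The $c$-integral against $\psi$ is therefore a \emph{Fourier transform} of $\Phi$ in the fourth coordinate. In other words the correct test function is not $f_k$ but its partial Fourier transform $\widehat{(f_k)}_2$ (in the paper's notation, $\widehat{\Phi}_{\eta,4}$ restricted to the last two variables), and the $\A^1$-integral in $Z_I$ comes from the remaining torus variable, not from $c$. This Fourier step also accounts for the shift $z^{2s}\to z^{2s+1}$ in $Z_I$, via the scaling of the transform under $\Phi\mapsto\Phi_z$.

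One further remark: rather than carrying $k\in U_\A$ as a parameter and integrating at the end (which requires checking uniformity of Lemma~\ref{lem:fundamental} in the Schwartz input), the paper exploits the standing assumption that $\varphi$ has a fixed $U_\A$-type $\eta$; averaging $\Phi$ against $\eta$ over $U_\A$ at the outset yields a single Schwartz function $\Phi_\eta$ and eliminates the $k$-dependence entirely. Your variant is workable but the $K$-type shortcut is cleaner.
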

\begin{proof}
  By Equation \eqref{eq:int8} we have 
$$
I(s, \Phi, \varphi) = \int_0^1 z^{2s} I^1 (\Phi_z, \varphi) \, d^\times z. 
$$
Here for any Schwartz function $\Phi$, $\Phi_z(x) = \Phi(d(\underline{z^{-1}}) x)$ and for any $\Phi$  
$$
I^1(\Phi, \varphi) =  \int_{G_\A^1/G_k}  \varphi(g)  \sum_{x \in S_k} \Phi(g^\iota \cdot x)dg.
$$

We have by Proposition \ref{prop:singular}, $S_k = S_{0,k} \coprod S_{1, k} \coprod S_{2,k}$, with 
$$
S_{0,k} = \{0\},
$$
$$
S_{1, k} = \coprod_{\alpha \in k^\times} (G_k/B_k) \cdot (0, 0, 0, \alpha) , 
$$
$$
S_{2, k} = \coprod_{\alpha \in k^\times} (G_k /A_k) \cdot (0, 0, \alpha, 0). 
$$
Set for $0 \leq i \leq 2$  
$$
I_i(\Phi, \varphi) =  \int_{G_\A^1/G_k} \varphi(g) \sum_{x \in S_{i, k}}\Phi(g^\iota \cdot x) dg. 
$$

\

Since $\phi$ is orthogonal to constants and one dimensional representations, $I_0=0$. 

\

Now we look at $I_1$. We have 
$$
I_1(\Phi, \varphi) = \int_{G_\A^1/G_k}  \varphi(g)\sum_{x \in S_{1, k}} \Phi(g^\iota \cdot x) dg 
$$
$$
=\int_{G_\A^1/G_k}  \varphi(g)\sum_{\gamma \in G_k/B_k} \sum_{\alpha \in k^\times}\Phi(g^\iota \cdot \gamma\cdot (0,0,0,\alpha)  ) dg 
$$
$$
=\int_{G_\A^1/B_k}  \varphi(g) \sum_{\alpha \in k^\times} \Phi(g^\iota \cdot (0,0,0,\alpha) ) dg. 
$$
By Equation \eqref{eq:int7} this last expression is equal to 
$$
=\int_{U_\A}\int_{B_\A^1/B_k}  \varphi( u g)\sum_{\alpha \in k^\times} \Phi(u^\iota  g^\iota \cdot (0,0,0,\alpha) ) \, dg \, du  
$$

We now apply Equation \eqref{eq:int4} to obtain 
$$
 =\int_{U_\A} \int_0^\infty d^\times \lambda \cdot \lambda^{-2} \int_{B_\A^0/B_k} db \,  \varphi(u a(\underline{\lambda}) b)\sum_{\alpha \in k^\times} \Phi(
u^\iota a(\underline{\lambda})^\iota b^\iota \cdot (0,0,0,\alpha) ) \,  du 
$$
If we apply Equation \eqref{eq:int3} we obtain 
$$
=\int_{U_\A} \int_0^\infty d^\times \lambda \cdot \lambda^{-2} \int_{(\A^1/k^\times)^2} d^\times t_1 \, d^\times t_2 \int_{\A / k} dc  \, 
$$
$$
 \varphi(u a(\underline{\lambda}) a(t_1, t_2) n(c) ) \sum_{\alpha \in k^\times}\Phi(u^\iota a(\underline{\lambda})^\iota a(t_1, t_2)^\iota n(c)^\iota \cdot (0,0,0,\alpha)) \, du 
$$
$$
=\int_{U_\A} \int_0^\infty d^\times \lambda \cdot \lambda^{-2} \int_{(\A^1/k^\times)^2} d^\times t_1 \, d^\times t_2 \int_{\A / k} dc  \, 
$$
$$
 \varphi(u a(\underline{\lambda}) a(t_1, t_2) n(c) ) \sum_{\alpha \in k^\times}\Phi(u^\iota a(\underline{\lambda}) a(t_2^{-1}, t_1^{-1}) n(c) \cdot (0,0,0,\alpha)) \, du 
$$
$$
=\int_{U_\A} \int_0^\infty d^\times \lambda \cdot \lambda^{-2} \int_{(\A^1/k^\times)^2} d^\times t_1 \, d^\times t_2 \int_{\A / k} dc  
$$
$$
 \varphi(u a(\underline{\lambda}) a(t_1, t_2) n(c) ) \sum_{\alpha \in k^\times}\Phi(u^\iota (0,0,0,\uu\lambda^{-3} t_1^{-2} t_2 \alpha)) \, du = 0,  
$$
as the $c$ integral of the cusp form $\varphi$ is zero.  In the above computation we have used the fact that $a(\underline{\lambda})^\iota = a(\underline{\lambda})$, $a(t_1, t_2)^\iota = a(t_2^{-1}, t_1^{-1})$, $n(c)^\iota = n(c)$, and we have also used the action given by Equation \eqref{eq:action}.

\

Next, we study 
$$
I_2(\Phi, \varphi)  = \int_{G_\A^1/G_k} \varphi(g) \sum_{x \in S_{2, k}} \Phi(g^\iota \cdot x) dg. 
$$

Suppose the $U_\A$-type of $\varphi$ is $\eta$, i.e., for $u \in K$ and $g \in GL_2(\A)$, $\varphi(ug) = \eta(u) \varphi(g)$.  Also for a Schwartz function $\Phi$ let 
\begin{equation}\label{eq:Phieta}
\Phi_\eta(x) = \int_{U_\A} \eta(u) \Phi(u^\iota \cdot x) \, du. 
\end{equation}
Note that $\Phi_\eta$ is also a Schwartz function. 
Then we have 
$$
I_2(\Phi, \varphi) =\int_{G_\A^1/G_k}  \varphi(g) \sum_{\gamma \in G_k/A_k} \sum_{\alpha \in k^\times} \Phi(g^\iota \gamma\cdot (0,0,\alpha,0))  dg 
$$
$$
=\int_{G_\A^1/A_k} \varphi(g) \sum_{\alpha \in k^\times} \Phi(g^\iota \cdot (0,0,\alpha,0) ) dg. 
$$
By Equation \eqref{eq:int7} this last expression is equal to 
 $$
=\int_{U_\A} \int_{B_\A^1/A_k} db \, \varphi(u b) \sum_{\alpha \in k^\times}  \Phi(u^\iota b^\iota \cdot (0,0,\alpha, 0)) \, du 
$$
 $$
= \int_{U_\A} \int_{B_\A^1/A_k} db \, \varphi( b) \eta(u) \sum_{\alpha \in k^\times}  \Phi(u^\iota b^\iota \cdot (0,0,\alpha, 0)) \, du
$$
$$
= \int_{B_\A^1/A_k} db \,  \varphi( b) \sum_{\alpha \in k^\times}  \Phi_\eta( b^\iota \cdot (0,0,\alpha, 0)) 
$$
after using the definition given in Equation \eqref{eq:Phieta}. If we apply Equation \eqref{eq:int4} followed by Equation \eqref{eq:int3} we get 
$$
=\int_{0}^\infty d^\times \lambda \, \lambda^{-2} \int_{B_\A^0 / A_k} db \,  \varphi(a(\underline{\lambda}) b) \sum_{\alpha \in k^\times} \Phi_\eta(a(\underline{\lambda})^\iota b^\iota \cdot (0,0,\alpha, 0)) 
$$
$$
= \int_{0}^\infty d^\times \lambda \, \lambda^{-2} \int_{(\A^1/k^\times)^2} d^\times t_1 \, d^\times t_2 \int_{\A} dc  \, \varphi(a(\underline{\lambda}) a(t_1, t_2) n(c)) 
$$
$$
\sum_{\alpha \in k^\times}  \Phi_\eta(a(\underline{\lambda}) a(t_2^{-1}, t_1^{-1}) n(c) \cdot (0,0,\alpha, 0)) 
$$
$$
= \int_{0}^\infty d^\times \lambda \, \lambda^{-2} \int_{(\A^1/k^\times)^2} d^\times t_1 \, d^\times t_2 \int_{\A} dc  \, \varphi(a(\underline{\lambda}) a(t_1, t_2) n(c)) 
$$
$$
\sum_{\alpha \in k^\times}\Phi_\eta(0, 0, \underline\lambda^{-1} t_1^{-1} \alpha, \underline\lambda^{-3} t_1^{-2} t_2 c \alpha) 
$$
$$
= \int_{0}^\infty d^\times \lambda \, \lambda^{-2} \int_{(\A^1/k^\times)^2} d^\times t_1 \, d^\times t_2 \sum_{\alpha \in k^\times} \int_{\A} dc  \, \varphi(a(\underline{\lambda}) a(t_1, t_2) n(c)) 
$$
$$
\Phi_\eta(0, 0, \underline\lambda^{-1} t_1^{-1} \alpha, \underline\lambda^{-3} t_1^{-2} t_2 c \alpha) 
$$
where the change of the order of summation and integral is justified because of the rapid decay of the integrand. 
Let $u =\underline\lambda^{-3} t_1^{-2} t_2 c \alpha$. Then $dc = \lambda^3 \, du$, and we get  
$$
= \int_{0}^\infty \lambda \, d^\times  \, \lambda \int_{(\A^1/k^\times)^2} d^\times t_1 \, d^\times t_2 \sum_{\alpha \in k^\times}
\int_{\A} du  \, 
$$
$$
\varphi(a(\underline{\lambda}) a(t_1, t_2) n(\underline\lambda^{3} t_1^2 t_2^{-1} \alpha^{-1} u)) 
 \Phi_\eta(0, 0, \underline\lambda^{-1} t_1^{-1} \alpha, u) . 
$$
Next, 
$$
a(\underline{\lambda}) a(t_1, t_2) n(\underline\lambda^{3} t_1^2 t_2^{-1} \alpha^{-1} u) = n(\underline{\lambda} t_1 \alpha^{-1} u) a(\underline{\lambda}) a(t_1, t_2). 
$$
This means 
$$
\varphi(a(\underline{\lambda}) a(t_1, t_2) n(\underline\lambda^{3} t_1^2 t_2^{-1} \alpha^{-1} u)) = \varphi(n(\underline{\lambda} t_1 \alpha^{-1} u) a(\underline{\lambda}) a(t_1, t_2))
$$
$$
= \varphi(n(\underline{\lambda} t_1 \alpha^{-1} u) a(\underline{\lambda}) a(t_1\alpha^{-1}, t_2))
$$
Collapsing the $\alpha$ sum with the $t_1$ integral gives 
$$
= \int_{0}^\infty \lambda \, d^\times  \, \lambda \int_{\A^1/k^\times} d^\times t_2  \int_{\A^1} d^\times t_1 \int_{\A} du  \,  \varphi(n(\underline\lambda t_1 u ) a(\underline{\lambda}) a(t_1, t_2))
$$
$$
\Phi_\eta(0, 0, \underline\lambda^{-1} t_1^{-1} , u) . 
$$
Now let $t = \underline{\lambda}^{-1} \cdot t_1^{-1}$. We get 
$$
= \int_{\A^1/k^\times} d^\times t_2  \int_{\A^\times} |t|_\A^{-1} d^\times t \int_{\A} du  \, 
\varphi(n(t^{-1} u) a(t^{-1}, t_2  \underline{|t|_\A}))
\Phi_\eta(0, 0, t , u). 
$$
By Equation \eqref{eq:whittaker} we have 
$$
\varphi(g) = \sum_{\beta \in k^\times} W_\varphi \left( g \begin{pmatrix} 1 \\ & \beta \end{pmatrix} \right). 
$$
As a result, 
$$
\int_{\A^1 / k^\times} \varphi(g a(1, t_2)) \, d^\times t_2 = \int_{\A^1}  W_\varphi(g a(1, t_2))  \, d^\times t_2. 
$$
We get 
$$
= \int_{\A^1} d^\times t_2  \int_{\A^\times} |t|_\A^{-1} d^\times t \int_{\A} du  \, 
W_\varphi(n(t^{-1} u) a(t^{-1}, t_2  \underline{|t|_\A}))
\Phi_\eta(0, 0, t , u). 
$$
Next, 
$$
W_\varphi(n(t^{-1} u) a(t^{-1}, t_2  \underline{|t|_\A})) = W_\varphi (a(t^{-1}, t_2  \underline{|t|_\A}) n(ut_2^{-1} t^{-2} \cdot \underline{|t|_\A}^{-1})) 
$$
$$
= \psi(ut_2^{-1} t^{-2} \cdot \underline{|t|_\A}^{-1}) W_\varphi (a(t^{-1}, t_2  \underline{|t|_\A})).
$$
A Fourier transform gives 
$$
= \int_{\A^1} d^\times t_2  \int_{\A^\times} |t|_\A^{-1} d^\times t  \, 
W_\varphi( a(t^{-1}, t_2  \underline{|t|_\A}))
\widehat\Phi_{\eta,4}(0, 0, t , t_2^{-1} t^{-2} \cdot \underline{|t|_\A}^{-1}), 
$$
where here $\widehat\Phi_{\eta, 4}$ is the Fourier transform of $\Phi_\eta$ with respect to the fourth variable.  Replace $t$ by $t^{-1}$ to obtain 
$$
= \int_{\A^1} d^\times t_2  \int_{\A^\times} |t|_\A d^\times t  \, 
W_\varphi( a(t, t_2  \underline{|t|_\A}^{-1}))
\widehat \Phi_{\eta,4}(0, 0, t^{-1} , t_2^{-1} t^2 \cdot \underline{|t|_\A}). 
$$
Then replace $t$ by $t \cdot t_2$ to obtain 
$$
= \int_{\A^1} d^\times t_2  \int_{\A^\times} |t|_\A d^\times t  \, 
W_\varphi( a(t \cdot t_2, t_2  \underline{|t|_\A}^{-1}))
\widehat \Phi_{\eta,4}(0, 0, t^{-1}t_2^{-1} , t_2 t^2 \cdot \underline{|t|_\A})
$$
$$
= \int_{\A^\times}  W_\varphi( a(t , \underline{|t|_\A}^{-1})) 
|t|_\A d^\times t  \left( \int_{\A^1}   
\widehat \Phi_{\eta,4}(0, 0, t^{-1}t_2^{-1} , t_2 t^2 \cdot \underline{|t|_\A})
 d^\times t_2 \right). 
$$

Going back to $I(s, \Phi, \varphi)$, we obtain 
$$
I(s, \Phi, \varphi) = \int_0^1 z^{2s} d^\times z \int_{\A^\times}  W_\varphi( a(t , \underline{|t|_\A}^{-1})) 
 |t|_\A d^\times t  
$$
$$\left( \int_{\A^1}   
\widehat{(\Phi_z)}_{\eta,4}(0, 0, t^{-1}t_2^{-1} , t_2 t^2 \cdot \underline{|t|_\A})
 d^\times t_2 \right).
$$
We note, $\Phi_z(x_1, x_2, x_3, x_4)  = 
\Phi( \underline{z}^{-1} x_1, \underline{z}^{-1} x_2, \underline{z}^{-1} x_3, \underline{z}^{-1} x_4)$. 
Also, $$(\Phi_z)_\eta (x_1, x_2, x_3, x_4) =  \Phi_\eta( \underline{z}^{-1} x_1, \underline{z}^{-1} x_2, \underline{z}^{-1} x_3, \underline{z}^{-1} x_4).$$ 
Finally,  for a positive real number $z$,  $$\widehat{(\Phi_z)}_{\eta,4}(x_1, x_2, x_3, x_4) = z \widehat{\Phi}_{\eta,4} ( \underline{z}^{-1} x_1, \underline{z}^{-1} x_2, \underline{z}^{-1} x_3, \underline{z} x_4),$$ where $\widehat{\Phi}_{\eta,4}$ is the Fourier transform of $\Phi_\eta$ in the fourth variable.  This gives 

$$
I(s, \Phi, \varphi) = \int_0^1 z^{2s+1} d^\times z \int_{\A^\times}  W_\varphi( a(t , \underline{|t|_\A}^{-1})) 
 |t|_\A d^\times t  
$$
$$\left( \int_{\A^1}   
\widehat{\Phi}_{\eta,4}(0, 0, \underline{z}^{-1} t^{-1}t_2^{-1} , \underline{z}t_2 t^2 \cdot \underline{|t|_\A})
 d^\times t_2 \right).
$$


For the remainder of this argument we will denote $\widehat{\Phi}_{\eta,4}$ by $f$. We note that $f$ is a Schwartz function on $\A^2$.
Without loss of generality, we may write the Schwartz function $f = f_n \otimes f_\infty$ for the non-archimedean and archimedean parts. Also, write $t = (t_n, t_\infty)$ again corresponding to the non-archimedean and archimedean parts respectively. 
For any function $\delta$ on $\A^\times$ we have 
$$
\int_{\A^1} \delta(x) \, d^\times x = \int_{\A_f^\times} \int_{k_\infty^1} \delta(v \underline{|v|_\A}^{-1} w) \, d^\times w \, d^\times v. 
$$

With this notation we have 
\begin{multline*}
I(s, \Phi, \varphi) = \int_0^1 z^{2s+1} d^\times z \int_{\A^\times}  W_\varphi( a(t \underline{|t|_\A}), 1)
 |t|_\A d^\times t  \\
\cdot\int_{\A_f^\times}   
f_n( t_n^{-1}v^{-1} , v t_n^2)
 d^\times v \cdot 
 \int_{k_\infty^1}   
f_\infty(\underline{z}^{-1} t_\infty^{-1}w^{-1}\underline{|v|_\A} , \underline{z}w \underline{|v|_\A}^{-1} t_\infty^2 \underline{|t|_\A})
 d^\times w .
\end{multline*}

If $s_1\in\C^n$ and $t=(t_1,\cdots,t_n)\in k_\infty$, then $s_1=(s_1^1,s_1^2,\cdots,s_1^n)$. Define $\displaystyle t^{s_1}=\prod_i t_i^{s_1^i}$.  For $t=(t_1,\cdots,t_n)\in k_\infty^\times$, define 
$$
u(t)=\left(\frac{t_1}{|t_1|},\cdots,\frac{t_n}{|t_n|}\right), \quad \abs(t)=(|t_1|,\cdots,|t_n|),
$$
so that $u(t)\abs(t)=t$. For $s_1 \in \C^n$ and $\kappa \in k_\infty$ define 
$$
\mathcal M f(s_1, \kappa) = \int_{\R_{>0}^n} f(w_1, \kappa) w_1^{s_1} \, d^\times w_1. 
$$
Mellin inversion then says 
\begin{multline*}
    f_\infty(\underline{z}^{-1} t_\infty^{-1}w^{-1}\underline{|v|_\A} , \underline{z}w \underline{|v|_\A}^{-1} t_\infty^2 \underline{|t|_\A}) \\
    =\oint_{\Re s_1 = c_1}  \uu{z}^{s_1} \abs (t_\infty)^{s_1} w^{s_1} \uu{|v|_\A}^{-s_1} \, ds_1
    \int_{\R^n_{>0}}f_\infty(w_1 u(t_\infty^{-1} w^{-1}), \underline{z}w \underline{|v|_\A}^{-1} t_\infty^2 \underline{|t|_\A})w_1^{s_1} \, d^\times w_1 
\end{multline*}
for any $n$-tuple $c_1$ all of whose components are positive. 

Putting this back into $Z_I(s,f,\varphi)$, we obtain

\begin{multline*}
I(s, \Phi, \varphi)
= \int_0^1 z^{2s+1} d^\times z \int_{\A^\times}  W_\varphi( a(t \underline{|t|_\A}), 1) |t|_\A d^\times t  \\
\int_{\A_f^\times}   
f_n( t_n^{-1}v^{-1} , v t_n^2) d^\times v \cdot 
 \int_{k_\infty^1}   
f_\infty(\underline{z}^{-1} t_\infty^{-1}w^{-1}\underline{|v|_\A} , \underline{z}w \underline{|v|_\A}^{-1} t_\infty^2 \underline{|t|_\A})
 d^\times w 
 \end{multline*}
 \begin{multline*}
= \int_0^1 z^{2s+1} d^\times z \int_{\A^\times}  W_\varphi( a(t \underline{|t|_\A}), 1) |t|_\A d^\times t  
 \int_{\A_f^\times}   
f_n( t_n^{-1}v^{-1} , v t_n^2) d^\times v \cdot \\
 \int_{k_\infty^1} 
\oint_{\Re s_1 = c_1}  \uu{z}^{s_1} \abs (t_\infty)^{s_1} w^{s_1} \uu{|v|_\A}^{-s_1}  \int_{\R^n_{>0}}f_\infty(w_1 u(t_\infty^{-1} w^{-1}), \underline{z}w \underline{|v|_\A}^{-1} t_\infty^2 \underline{|t|_\A})w_1^{s_1} \, d^\times w_1 \, ds_1 
 d^\times w . 
\end{multline*}
We let $c_1 = (\sigma, \dots, \sigma)$ with $\sigma > 0$. We will show that for $\sigma$ large the above integral is absolutely convergent and that it provides an analytic continuation of the function $I$ for all values $s$. For each $\alpha \in \C$, let  $\ell(\alpha)$ be the vector of length $n$, the number of archimedean places of the number field $k$, all of whose entries are equal to $\alpha$. Also, if $t = (t_1, \dots, t_n) \in k_\infty$ and $s=(s_1, \dots, s_n) \in \C^n$, then we set 
$$
t^{\langle s \rangle} := \sum_i |t_i|^{s_i}.$$ 

The absolute value of the triple integral over $k_\infty^1, \Re s_1 = c_1$ and $\R^n_{>0}$ is bounded by 
$$
\int_{k_\infty^1} z^\sigma \abs(t_\infty)^{\ell(\sigma)}|v|_\A^{-\sigma}\oint_{\Re s_1 = c_1} \int_{\R_{>0}^n} \frac{w_1^{\ell(\sigma) + \ell(m)}}{P(s_1)^m P(w_1)^M}
\cdot \frac{1}{(\underline{z}w \underline{|v|_\A}^{-1} t_\infty^2 \underline{|t|_\A})^{\langle \ell(N) \rangle}}\, d^\times w_1 \, ds_1 
 d^\times w, 
$$
where here for each $\beta = (\beta_1, \dots, \beta_n) \in \C^n$, 
$$
P(\beta) = \prod_i ( 1 + |\beta_i|). 
$$
It is clear that by choosing $m$ and $M$ large, with $M$ much bigger than $m$, we can guarantee that the double integral over $\Re s_1 = c_1$ and $\R_{>0}^n$ converges. The resulting expression is then majorized by 
$$
z^{\sigma - \frac{N}{n}} |t_\infty|^\sigma |v|_\A^{-\sigma + \frac{N}{n}} |t|_\A^{-\frac{N}{n}}\int_{k_\infty^1} \frac{d^\times w}{(w t_\infty^2)^{\langle \ell(N)\rangle}}.
$$

\

As long as 
\begin{equation}\label{eq:reals}
    2 \Re s + 1 + \sigma - \frac{N}{n} > 0,
\end{equation} the integral over $z$ is convergent. 

\

We give a bound for the integral 
$$
\int_{k_\infty^1} \frac{d^\times w}{(w t_\infty^2)^{\langle \ell(N)\rangle}}
$$
We need a lemma: 
\begin{lem}\label{lem:AMGM}
We have 
$$
\int_{\R_{>0}^{n-1}}\frac{\prod_i d^\times w_i}{\sum_i \gamma_i w_i^N+ \gamma_n \prod_i w_i^{-N}}
 \ll \sum_{I \subset [n-1]}  \frac{1}{\left(\prod_{i \in I} \gamma_i^3  \cdot \prod_{j \notin I} \gamma_j \cdot \gamma_n^2\right)^\frac{1}{n + 2 |I| + 2}}
$$
provided that $N > 2n.$
\end{lem}
\begin{proof}
The the integral is majorized by \footnote{Here $j \notin I$ means $j \in [n-1]\setminus I$.}
\begin{multline*}
\int_{\R_{>0}^{n-1}}\frac{\prod_i d^\times w_i}{\sum_i \gamma_i w_i^N+ \gamma_n \prod_i w_i^{-N}}\\
= \sum_{I \subset [n-1]} \int_{\substack{ {1 < w_i < \infty, i \in I} \\ {0 < w_j < 1, j \notin I}}} \frac{\prod_i d^\times w_i}{3\sum_{i\in I} \frac{1}{3}\gamma_i w_i^N + \sum_{j \notin I}\gamma_j w_j^N+ 2 \cdot \frac{1}{2}\gamma_n \prod_i w_i^{-N}}. 
\end{multline*}
By the Arithmetic Mean-Geometric Mean Inequality, 
$$
\frac{3\sum_{i\in I} \frac{1}{3}\gamma_i w_i^N + \sum_{j \notin I}\gamma_j w_j^N+ 2 \cdot \frac{1}{2}\gamma_n \prod_i w_i^{-N}}{n + 2 |I| + 2}
\geq \left(\prod_{i \in I} \gamma_i^3 w_i^N \cdot \prod_{j \notin I} \gamma_j w_j^{-N}\cdot \gamma_n^2\right)^\frac{1}{n + 2 |I| + 2}. 
$$
This means that the integral is majorized by 
\begin{align*}
\ll& \sum_{I \subset [n-1]} \int_{\substack{ {1 < w_i < \infty, i \in I} \\ {0 < w_j < 1, j \notin I}}} \frac{\prod_i d^\times w_i}{\left(\prod_{i \in I} \gamma_i^3 w_i^N \cdot \prod_{j \notin I} \gamma_j w_j^{-N}\cdot \gamma_n^2\right)^\frac{1}{n + 2 |I| + 2}}\\
&= \sum_{I \subset [n-1]}  \frac{1}{\left(\prod_{i \in I} \gamma_i^3  \cdot \prod_{j \notin I} \gamma_j \cdot \gamma_n^2\right)^\frac{1}{n + 2 |I| + 2}} 
 \prod_{i\in I}\int_{1 < w_i < \infty} \frac{d^\times w_i}{w_i^{\frac{N}{n + 2 |I| + 2}}}\prod_{j \notin I}\int_{0 < w_j < 1} w_j^{\frac{N}{n + 2 |I| + 2}} \, d^\times w_j\\
\ll& \sum_{I \subset [n-1]}  \frac{1}{\left(\prod_{i \in I} \gamma_i^3  \cdot \prod_{j \notin I} \gamma_j \cdot \gamma_n^2\right)^\frac{1}{n + 2 |I| + 2}}
\end{align*}
provided that 
\begin{equation}\label{eq:Nn}
    N > 2n. 
\end{equation}
\end{proof}
To apply the lemma let $(\gamma_1, \dots, \gamma_n)$ be the vector consisting of the absolute values of the entries of the vector $t_\infty^{2N}$. This means if we write $t_\infty = (\tau_1, \dots, \tau_n)$, then for $N > 2n$, 
$$
\int_{k_\infty^1} \frac{d^\times w}{(w t_\infty^2)^{\langle \ell(N)\rangle}} \ll \sum_{I \subset [n-1]}  \frac{1}{\left(\prod_{i \in I} |\tau_i|^{6N}  \cdot \prod_{j \notin I} |\tau_j|^{2N} \cdot |\tau_n|^{4N}\right)^\frac{1}{n + 2 |I| + 2}}.
$$

Next we consider the Whittaker integral. We write $t = (t_n, t_\infty)$ corresponding to the non-archimedean and archimedean components. The archimedean integrals are majorized by 
$$
\int_{k_\infty^\times}  d^\times t_\infty \, W_\infty(a( t_\infty \underline{|t_\infty|_\infty}\uu{|t_n|_\A}, 1))
|t|_\infty^{\sigma - \frac{N}{n}+1}  
 \sum_{I \subset [n-1]}  \frac{1}
{\left(\prod_{i \in I} |\tau_i|^{6N}  \cdot \prod_{j \notin I} |\tau_j|^{2N} \cdot |\tau_n|^{4N}\right)^\frac{1}{n + 2 |I| + 2}}
$$
where here $t=(\tau_1, \dots, \tau_n)$. Define a map $\gamma: k_\infty^\times \to k_\infty^\times$ given by 
$$
\gamma(\tau_1, \dots, \tau_n) = \left(\tau_1 \left( \prod_i |\tau_i|\right)^{1/n}, \dots, \tau_n \left( \prod_i |\tau_i|\right)^{1/n}\right). 
$$
The inverse of this map is given by 
$$
\gamma^{-1}(\beta_1, \dots, \beta_n) = \left(\frac{\beta_1}{\left( \prod_i |\beta_i|\right)^{1/2n}}, \dots, \frac{\beta_n}{\left( \prod_i |\beta_i|\right)^{1/2n}}\right).
$$
We let $\beta = t_\infty \underline{|t_\infty|_\infty}$ to be the new variable. Then $|\beta|_\infty = |t|_\infty^2$ and $d^\times \beta = d^\times t$. We obtain 
\begin{align*}
&\int_{k_\infty^\times} d^\times \beta \, W_\infty(a( \beta \uu{|t_n|_\A} , 1))
|\beta|_\infty^{\frac{1}{2}\left(\sigma - \frac{N}{n}+1\right)} 
\sum_{I \subset [n-1]}  \frac{|\beta|_\infty^{\frac{N}{n}\left(1-\frac{1}{n+2|I|+2}\right)}}{\left(\prod_{i \in I} |\beta_i|^{6N}  \cdot \prod_{j \notin I} |\beta_j|^{2N} \cdot |\beta_n|^{4N}\right)^\frac{1}{n + 2 |I| + 2}} \\
&= \sum_{I \subset [n-1]} \int_{k_\infty^\times} d^\times \beta \, W_\infty(a( \beta \uu{|t_n|_\A} , 1))
|\beta|_\infty^{\frac{1}{2}\left(\sigma - \frac{N}{n}+1\right)+\frac{N}{n}\left(1-\frac{1}{n+2|I|+2}\right)}  \\
&\phantom{==}\times\frac{1}{\left(\prod_{i \in I} |\beta_i|^{6N}  \cdot \prod_{j \notin I} |\beta_j|^{2N} \cdot |\beta_n|^{4N}\right)^\frac{1}{n + 2 |I| + 2}} \\
&= \sum_{I \subset [n-1]} \prod_{i \in I}\int_{k_i^\times} W_i(a(\beta_i |t_n|_\A^{1/n}, 1)) |\beta_i|^{\frac{1}{2}\left(\sigma - \frac{N}{n}+1\right)+\frac{N}{n}\left(1-\frac{1}{n+2|I|+2}\right) - \frac{6N}{n + 2 |I|+2}} \, d^\times \beta_i \\
&\phantom{==}\times \prod_{j \notin I}\int_{k_j^\times} W_j(a(\beta_j |t_n|_\A^{1/n}, 1)) |\beta_j|^{\frac{1}{2}\left(\sigma - \frac{N}{n}+1\right)+\frac{N}{n}\left(1-\frac{1}{n+2|I|+2}\right) - \frac{2N}{n + 2 |I|+2}} \, d^\times \beta_j\\
&\phantom{==}\times \int_{k_n^\times} W_n(a(\beta_n |t_n|_\A^{1/n}, 1)) |\beta_n|^{\frac{1}{2}\left(\sigma - \frac{N}{n}+1\right)+\frac{N}{n}\left(1-\frac{1}{n+2|I|+2}\right) - \frac{4N}{n + 2 |I|+2}} \, d^\times \beta_n\\
&= \sum_{I \subset [n-1]} |t_n|_\A^{- \frac{1}{2} \left(\sigma - \frac{N}{n}+1\right) +\frac{N}{n}\left(1-\frac{1}{n+2|I|+2}\right)}\\
&\phantom{==}\times\prod_{i \in I}\int_{k_i^\times} W_i(a(\beta_i, 1)) |\beta_i|^{\frac{1}{2}\left(\sigma - \frac{N}{n}+1\right)+\frac{N}{n}\left(1-\frac{1}{n+2|I|+2}\right) - \frac{6N}{n + 2 |I|+2}} \, d^\times \beta_i\\
&\phantom{==}\times \prod_{j \notin I}\int_{k_j^\times} W_j(a(\beta_j, 1)) |\beta_j|^{\frac{1}{2}\left(\sigma - \frac{N}{n}+1\right)+\frac{N}{n}\left(1-\frac{1}{n+2|I|+2}\right) - \frac{2N}{n + 2 |I|+2}} \, d^\times \beta_j\\
&\phantom{==}\times \int_{k_n^\times} W_n(a(\beta_n, 1)) |\beta_n|^{\frac{1}{2}\left(\sigma - \frac{N}{n}+1\right)+\frac{N}{n}\left(1-\frac{1}{n+2|I|+2}\right) - \frac{4N}{n + 2 |I|+2}} \, d^\times \beta_n.
\end{align*}
We now need to pick $\sigma$ large enough to guarantee the convergence of all of these integrals. 

Let $F$ be a non-archimedean field with ring of integers $\mathcal O$ and prime ideal $\mathfrak q$. For an unramified character $\chi$, define $$\Xi(\tau)=\int_{F^\times} \mathrm{ch}_{\mathcal O \times \mathcal O}(\tau^{-1}v^{-1},v\tau^2)\chi(v)\ d^\times v.$$ Then $\tau^{-1}v^{-1}\in\mathcal{O}$ and $v\tau^2\in\mathcal{O}$, that is to say, $-\ord\tau-\ord v\ge 0$ and $\ord v+2\ord\tau\ge 0$, which says $-2\ord\tau\le\ord v\le-\ord\tau$. In particular if $\tau\notin \mathcal O$, then $\Xi(\tau) = 0$, and if $\tau\in\mathcal O$, 
\begin{align*}
\Xi(\tau)&=\sum_{k=-2\ord\tau}^{-\ord\tau} \int_{\varpi^ku}1\cdot\chi(v)\ d^\times v\\
&=\sum_{k=-2\ord\tau}^{-\ord \tau}\chi(\varpi^k)\\
&=\frac{\chi(\varpi)^{-\ord\tau+1}-\chi(\varpi)^{-2\ord\tau}}{\chi(\varpi)-1}\\
&=\frac{\chi(\varpi)\chi(\tau)^{-1}-\chi(\tau)^{-2}}{\chi(\varpi)-1}\\
&=\frac{\chi(\tau)^{-1}-\chi(\varpi)^{-1}\chi(\tau)^{-2}}{1-\chi(\varpi)^{-1}}.
\end{align*}
Note that if $\chi$ is unramified, then if $\tau$ is a unit, $\Xi(\tau) =1$.  Similarly, we can check that for if for $k, l \geq 0$ we set 
$$
\Xi_{k, l}(\tau) = \int_{F^\times} \mathrm{ch}_{\mathfrak q^{-k} \times \mathfrak q^{-l}}(\tau^{-1}v^{-1},v\tau^2)\chi(v)\ d^\times v, 
$$
then if $\tau \in \mathfrak q^{-k-l}$,
$$
\Xi_{\tau, l}(\tau) = \frac{\chi(\varpi)^{k} \chi(\tau)^{-1}- \chi(\varpi)^{-l-1}\chi(\tau)^{-2}}{1-\chi(\varpi)^{-1}}, 
$$
and $\Xi_{k, l}(t)=0$, otherwise. 

Going back to $f_n$, there is a finite set of place $S$ containing all places at infinity and a non-negative integer $k$ such that 
$$
|f_n| \ll \prod_{v \notin S} \mathrm{ch}_{\mathcal O_v \times \mathcal O_v} \times \prod_{v \in S}\mathrm{ch}_{\mathfrak q_v^{-k} \times \mathfrak q_v^{-k}}.
$$
This means 
$$
\int_{\A_f^\times}   
|f_n( t_n^{-1}v^{-1} , v t_n^2)| \cdot |v|_\A^{-\sigma + \frac{N}{n}} d^\times v
$$
$$
\ll \prod_{v \notin S} \frac{|t|_v^{\sigma - \frac{N}{n}}- q_v^{-\sigma + \frac{N}{n}}|t|_v^{2(\sigma - \frac{N}{n})}}{1-q_v^{-\sigma + \frac{N}{n}}} 
 \prod_{v \in S}\frac{q_v^{(\sigma- \frac{N}{n})k}|t|_v^{\sigma - \frac{N}{n}}- q_v^{(-\sigma + \frac{N}{n})(k+1)}|t|_v^{2(\sigma -\frac{N}{n})}}{1-q_v^{-\sigma +\frac{N}{n}}}
$$
We now examine the non-archimedean Whittaker integral. Since $W_\varphi$ is an infinite product of local unramified Whittaker functions we have for each non-archimedean place $v$ and for $\Re s$ large, 
$$
\int_{k_v^\times} W_v(a(t, 1)) |t|^{s- \frac{1}{2}} \chi(t) \, d^\times t = \frac{1}{(1- \alpha_v \chi(\varpi_v) q_v^{-s})(1-\beta_v \chi(\varpi_v) q_v^{-s})}
$$
with $\alpha_v, \beta_v$ the local Satake parameters. 

Consequently, for $v \notin S$, and for $\sigma$ large, for each subset $I \subset [n-1]$ we  get a contribution 
$$
\int_{k_v^\times} W_v(a(t, 1)) \left( \frac{|t|_v^{\sigma - \frac{N}{n}}- q_v^{-\sigma + \frac{N}{n}}|t|_v^{2(\sigma - \frac{N}{n})}}{1-q_v^{-\sigma + \frac{N}{n}}} \right) |t|_v^{-\frac{N}{n}} |t|_v 
|t|_v^{- \frac{1}{2} \left(\sigma - \frac{N}{n}+1\right)+ \frac{N}{n}\left(1-\frac{1}{n+2|I|+2}\right)} \, d^\times t,
$$
noting that we will eventually have to multiply all of these integrals, with the contributions of ramified places, and the archimedean Whittaker integrals.  Set  
$$
\alpha(I) =  -\frac{N}{n(n+2|I|+2)} +1, 
$$
and 
$$
\beta =\sigma - \frac{N}{n}. 
$$
Then for each $v\notin S$ as above the Whittaker integral is equal to 
\begin{align*}
&\frac{1}{1-q_v^{-\beta}}\int_{k_v^\times} W_v(a(t, 1)) |t|_v^{\frac{1}{2}\beta + \alpha(I) -\frac{1}{2}}\, d^\times t
-\frac{q_v^{-\beta}}{1-q_v^{-\beta}}\int_{k_v^\times} W_v(a(t, 1)) |t|_v^{\frac{3}{2}\beta + \alpha(I)-\frac{1}{2}}\, d^\times t\\
&=\frac{1}{1-q_v^{-\beta}}\frac{1}{\left(1-\alpha_v q_v^{-\frac{1}{2}\beta-\alpha(I)}\right)\left(1-\beta_v q_v^{-\frac{1}{2}\beta-\alpha(I)}\right)}-\frac{q_v^{-\beta}}{1-q_v^{-\beta}}\frac{1}{\left(1-\alpha_v q_v^{-\frac{3}{2}\beta-\alpha(I)}\right)\left(1-\beta_v q_v^{-\frac{3}{2}\beta-\alpha(I)}\right)}\\
&=\frac{1+\alpha_v\beta_vq_v^{-3\beta-2\alpha(I)}-q_v^{-\beta}-\alpha_v\beta_vq_v^{-2\beta-\alpha(I)}}{(1-q_v^{-\beta})\left(1-\alpha_v q_v^{-\frac{1}{2}\beta-\alpha(I)}\right)\left(1-\beta_v q_v^{-\frac{1}{2}\beta-\alpha(I)}\right)\left(1-\alpha_v q_v^{-\frac{3}{2}\beta-\alpha(I)}\right)\left(1-\beta_v q_v^{-\frac{3}{2}\beta-\alpha(I)}\right)}\\
&=\frac{1-\alpha_v\beta_vq_v^{-2\beta-2\alpha(I)}}{\left(1-\alpha_v q_v^{-\frac{1}{2}\beta-\alpha(I)}\right)\left(1-\beta_v q_v^{-\frac{1}{2}\beta-\alpha(I)}\right)\left(1-\alpha_v q_v^{-\frac{3}{2}\beta-\alpha(I)}\right)\left(1-\beta_v q_v^{-\frac{3}{2}\beta-\alpha(I)}\right)}
\end{align*}

As a result, the product over $v \notin S$ is equal to 
$$
L^S\left(\pi, \frac{1}{2}\beta + \alpha(I)\right)L^S\left(\pi, \frac{3}{2} \beta + \alpha(I)\right) 
\prod_{v \notin S}
\left(1-\alpha_v\beta_vq_v^{-2\beta-2\alpha(I)}\right) 
$$
with $\beta$ and $\alpha(I)$ as above. The incomplete $L$-functions converge for $\sigma$ large. The convergence of the remaining product over $v \notin S$ follows from any approximation to the Ramanujan conjecture. The finite product over $v \in S$, too, converges for $\sigma$ large. 
\end{proof}

\begin{remark}
    This theorem is also valid for $\varphi-\varphi_N$ for any Eisenstein series $\varphi = E(\tau)$ with $\Re \tau \geq 0$.  
\end{remark}
\subsection{The function $J(s, \Phi, \varphi)$ when $\varphi$ is a cusp form}\label{sect:J}
In this section we prove the following result: 
\begin{thm}\label{thm:cuspJ}
 Let $\varphi$ be a cusp form.   The integral defining $J(s, \Phi, \varphi)$ converges absolutely and uniformly on compact sets of $\Re s \gg 0$.  There is an analytic continuation to a holomorphic function on $\Re s > 3/4$. 
\end{thm}

\begin{proof}
We provide a formula for the function $J(s, \Phi, \varphi)$ which is similar to the formula obtained for $I$.  We have  
$$
J(s, \Phi, \varphi) = \int_0^1 z^{2s} J^1 (\Phi^z, \varphi) \, d^\times z, 
$$
where here for any Schwartz function $\Phi$, 
$\Phi^z(x) = \Phi(d(\underline{z}) x)$ and for any $\Phi$,   
$$
J^1(\Phi, \varphi) =  \int_{G_\A^1/G_k}  \varphi(g)  \sum_{x \in S_k} \Phi(g \cdot x)dg.
$$
By Proposition \ref{prop:singular} we have $S_k = S_{0,k} \coprod S_{1, k} \coprod S_{2,k}$, with 
$$
S_{0,k} = \{0\},
$$
$$
S_{1, k} = \coprod_{\alpha \in k^\times} (G_k/B_k) \cdot (0, 0, 0, \alpha) , 
$$
$$
S_{2, k} = \coprod_{\alpha \in k^\times} (G_k /A_k) \cdot (0, 0, \alpha, 0). 
$$
Set for $0 \leq i \leq 2$  
$$
J_i(\Phi, \varphi) =  \int_{G_\A^1/G_k}  \varphi(g) \sum_{x \in S_{i, k}}\widehat \Phi(g \cdot x) dg. 
$$

\

Since $\varphi$ is orthogonal to constants and one dimensional representations, $J_0=0$. 

\

Now we look at $J_1$. We have 
$$
J_1(\Phi, \varphi) = \int_{G_\A^1/G_k} \varphi(g)\sum_{x \in S_{1, k}} \Phi(g\cdot x) dg 
$$
$$
=\int_{G_\A^1/G_k}  \varphi(g)\sum_{\gamma \in G_k/B_k} \sum_{\alpha \in k^\times}\Phi(g\cdot \gamma\cdot (0,0,0,\alpha)  ) dg 
$$
$$
=\int_{G_\A^1/B_k} \varphi(g) \sum_{\alpha \in k^\times} \Phi(g\cdot (0,0,0,\alpha) ) dg 
$$
$$
=\int_K\int_{B_\A^1/B_k}  \varphi( u g)\sum_{\alpha \in k^\times} \Phi(u  g\cdot (0,0,0,\alpha) ) \, dg \, du  
$$
$$
= \int_K \int_0^\infty d^\times \lambda \cdot \lambda^{-2} \int_{B_\A^0/B_k} db \, \varphi(u a(\underline{\lambda}) b)\sum_{\alpha \in k^\times} \Phi(u a(\underline{\lambda}) b \cdot (0,0,0,\alpha))  \, dg \, du 
$$
$$
=\int_K \int_0^\infty d^\times \lambda \cdot \lambda^{-2} \int_{(\A^1/k^\times)^2} d^\times t_1 \, d^\times t_2 \int_{\A / k} dc  \, 
$$
$$
 \varphi(u a(\underline{\lambda}) a(t_1, t_2) n(c) ) \sum_{\alpha \in k^\times}\Phi(0,0,0,\lambda^{-3} t_2^2 t_1^{-1} \alpha) \, du = 0,  
$$
as the $c$ integral of the cusp form $\varphi$ is zero.

\

Next, we study 
$$
J_2(\Phi, \varphi)  = \int_{G_\A^1/G_k} \varphi(g) \sum_{x \in S_{2, k}} \Phi(g \cdot x) dg. 
$$

Suppose the $K$-type of $\varphi$ is $\eta$, i.e., for $u \in K$ and $g \in GL_2(\A)$, $\varphi(ug) = \eta(u) \varphi(g)$.  Also for a Schwartz function $\Phi$ let 
$$
\Phi_\eta(x) = \int_K \eta(u) \Phi(u\cdot x) \, du. 
$$
Note that $\Phi_\eta$ is also a Schwartz function. 
Then we have 
$$
J_2(\Phi, \varphi) =\int_{G_\A^1/G_k}  \varphi(g) \sum_{\gamma \in G_k/A_k} \sum_{\alpha \in k^\times} \Phi(g \gamma\cdot (0,0,\alpha,0))  dg 
$$
$$
=\int_{G_\A^1/A_k}  \varphi(g) \sum_{\alpha \in k^\times} \Phi(g\cdot (0,0,\alpha,0) ) dg 
$$
 $$
= \int_K \int_{B_\A^1/A_k} db \, \varphi(u b) \sum_{\alpha \in k^\times}  \Phi(u b \cdot (0,0,\alpha, 0)) \, du 
$$
 $$
= \int_K \int_{B_\A^1/A_k} db \, \varphi( b) \eta(u) \sum_{\alpha \in k^\times}  \Phi(u b \cdot (0,0,\alpha, 0)) \, du
$$
$$
= \int_{B_\A^1/A_k} db \,  \varphi( b) \sum_{\alpha \in k^\times}  \Phi_\eta( b \cdot (0,0,\alpha, 0)) 
$$
$$
= \int_{0}^\infty d^\times \lambda \, \lambda^{-2} \int_{B_\A^0 / A_k} db \,  \varphi(a(\underline{\lambda}) b) \sum_{\alpha \in k^\times} \Phi_\eta(a(\underline{\lambda}) b \cdot (0,0,\alpha, 0)) 
$$
$$
= \int_{0}^\infty d^\times \lambda \, \lambda^{-2} \int_{(\A^1/k^\times)^2} d^\times t_1 \, d^\times t_2 \int_{\A} dc  \,  \varphi(a(\underline{\lambda}) a(t_1, t_2) n(c)) 
$$
$$
\sum_{\alpha \in k^\times}  \Phi_\eta(a(\underline{\lambda}) a(t_1, t_2) n(c) \cdot (0,0,\alpha, 0)) 
$$
$$
= \int_{0}^\infty d^\times \lambda \, \lambda^{-2} \int_{(\A^1/k^\times)^2} d^\times t_1 \, d^\times t_2 \int_{\A} dc  \,  \varphi(a(\underline{\lambda}) a(t_1, t_2) n(c)) 
$$
$$
\sum_{\alpha \in k^\times}\Phi_\eta(0, 0, \underline\lambda^{-1} t_2 \alpha, \underline\lambda^{-3} t_2^2 t_1^{-1} c \alpha) . 
$$
Let $u =\underline\lambda^{-3} t_2^2 t_1^{-1} c \alpha$. Then $dc = \lambda^3 \, du$, and we get  
$$
= \int_{0}^\infty \lambda \, d^\times  \,  \lambda \int_{(\A^1/k^\times)^2} d^\times t_1 \, d^\times t_2 \int_{\A} du  \, \varphi(a(\underline{\lambda}) a(t_1, t_2) n(\underline\lambda^{3} t_2^{-2} t_1 \alpha^{-1} u)) 
$$
$$
\sum_{\alpha \in k^\times} \Phi_\eta(0, 0, \underline\lambda^{-1} t_2 \alpha, u) . 
$$
Next, 
$$
a(\underline{\lambda}) a(t_1, t_2) n(\underline\lambda^{3} t_2^{-2} t_1 \alpha^{-1} u) = n(\underline{\lambda} t_2^{-1} \alpha^{-1} u) a(\underline{\lambda}) a(t_1, t_2). 
$$
This means 
$$
\varphi(a(\underline{\lambda}) a(t_1, t_2) n(\underline\lambda^{3} t_2^{-2} t_1 \alpha^{-1} u)) = \varphi(n(\underline{\lambda} t_2^{-1} \alpha^{-1}u) a(\underline{\lambda}) a(t_1, t_2))
$$
$$
= \varphi(n(\underline{\lambda} t_2^{-1} \alpha^{-1}u) a(\underline{\lambda}) a(t_1, t_2\alpha))
$$
Collapsing the $\alpha$ sum with the $t_2$ integral gives 
$$
= \int_{0}^\infty \lambda d^\times  \, \lambda \int_{\A^1/k^\times} d^\times t_1  \int_{\A^1} d^\times t_2 \int_{\A} du  \,  \varphi(n(\underline\lambda t_2^{-1}u ) a(\underline{\lambda}) a(t_1, t_2))
$$
$$
\Phi_\eta(0, 0, \underline\lambda^{-1} t_2 , u) . 
$$

We can write 

$$J(s,\Phi,\varphi)=\int_0^1 z^{2s} J_2(\Phi^z,\varphi)\ d^\times z$$
$$=\int_0^1 d^\times z\ z^{2s} \int_0^\infty \lambda\ d^\times\lambda \int_{\A^1/k^\times} d^\times t_1\int_{\A^1} d^\times t_2 \int_\A du$$
$$\varphi(n(\uu{\lambda}t_2^{-1}u)a(\uu{\lambda})a(t_1,t_2))\Phi_\eta(0,0,\uu{z}\,\uu{\lambda}^{-1}t_2,\uu{z}u).$$
Using the change of variable $u\mapsto u\uu{z}^{-1}$, we obtain 
$$=\int_0^1 d^\times z\ z^{2s-1} \int_0^\infty \lambda^{-1} d^\times\lambda \int_{\A^1/k^\times} d^\times t_1\int_{\A^1} d^\times t_2 \int_\A du$$
$$\varphi(n(\uu{z}^{-1}\uu{\lambda}^{-1}t_2^{-1}u)a(\uu{\lambda}^{-1})a(t_1,t_2))\Phi_\eta(0,0,\uu{z}\,\uu{\lambda}t_2,u).$$
With another change of variable $z=\lambda^{-1}\zeta$, we obtain
$$=\int_0^\infty \lambda^{-1}d^\times\lambda\int_0^\lambda  (\zeta\lambda^{-1})^{2s-1} d^\times\zeta \int_{\A^1/k^\times} d^\times t_1\int_{\A^1} d^\times t_2 \int_\A du$$
$$\varphi(n(\zeta^{-1}t_2^{-1}u)a(\uu{\lambda}^{-1})a(t_1,t_2))\Phi_\eta(0,0,\uu{\zeta}t_2,u)$$
$$=\int_0^\infty \lambda^{-2s}d^\times\lambda \int_0^\lambda\zeta^{2s-1}d^\times\zeta \int_{\A^1/k^\times} d^\times t_1\int_{\A^1} d^\times t_2 \int_\A du$$
$$\varphi(n(\zeta^{-1}t_2^{-1}u)a(\uu{\lambda}^{-1})a(t_1,t_2))\Phi_\eta(0,0,\uu{\zeta}t_2,u).$$
With the change of variable $\lambda\mapsto\lambda^{-1}$, we obtain
$$=\int_0^\infty \lambda^{2s}d^\times\lambda \int_0^{\lambda^{-1}}\zeta^{2s-1}d^\times\zeta \int_{\A^1/k^\times} d^\times t_1\int_{\A^1} d^\times t_2 \int_\A du$$
$$\varphi(n(\zeta^{-1}t_2^{-1}u)a(\uu{\lambda})a(t_1,t_2))\Phi_\eta(0,0,\uu{\zeta}t_2,u).$$
Since by \cite[Theorem A]{MS} for any positive $N$ $$|\varphi(n(\zeta^{-1}t_2^{-1}u)a(\uu{\lambda})a(t_1,t_2))|\ll \max(\lambda,\lambda^{-1})^{-N},$$  we obtain for $\sigma = \Re s$, 
$$|J(s, \Phi, \varphi)| \ll\int_0^\infty \lambda^{2\sigma} d^\times \lambda \max(\lambda,\lambda^{-1})^{-N} \int_0^{\lambda^{-1}} \zeta^{2\sigma-1} d^\times \zeta\int_{\A^1} d^\times t_2 \int_\A |\Phi_\eta(0,0,\uu{\zeta}t_2,u)| du$$
$$\ll \int_0^\infty \lambda^{2\sigma} d^\times \lambda \max(\lambda,\lambda^{-1})^{-N} \int_0^{\lambda^{-1}} \zeta^{2\sigma-1} d^\times \zeta\int_{\A^1} d^\times t_2 \Psi(\uu{\zeta}t_2),$$
where $\Psi$ is a non-negative Schwartz function. Without loss of generality, we assume $\Psi = \Psi_f \otimes \Psi_\infty$, with $\Psi_f$ and $\Psi_\infty$ being finite and archimedean Schwartz functions, respectively.  This last expression is then equal to

$$=\int_0^\infty \lambda^{2\sigma} d^\times \lambda \max(\lambda,\lambda^{-1})^{-N} \int_0^{\lambda^{-1}} \zeta^{2\sigma-1} d^\times \zeta\int_{\A_f^\times}\int_{k_\infty^1} d^\times v\, d^\times w\, \Psi(\uu{\zeta}\,\uu{|v|_\A}^{-1}vw)$$
$$=\int_0^\infty \lambda^{2\sigma} d^\times \lambda \max(\lambda,\lambda^{-1})^{-N} \int_0^{\lambda^{-1}} \zeta^{2\sigma-1} d^\times \zeta\int_{\A_f^\times}\Psi_f(v)\,d^\times v\int_{k_\infty^1}  d^\times w\, \Psi_\infty(\uu{\zeta}\,\uu{|v|_\A}^{-1}w).$$
With the change of variable $\zeta\mapsto \zeta|v|_\A$, we obtain
$$=\int_0^\infty \lambda^{2\sigma} d^\times \lambda \max(\lambda,\lambda^{-1})^{-N} \int_0^{\lambda^{-1}|v|_\A} (\zeta|v|_\A)^{2\sigma-1} d^\times \zeta\int_{\A_f^\times}\Psi_f(v)\,d^\times v\int_{k_\infty^1}  d^\times w\, \Psi_\infty(\uu{\zeta}\,w).$$
With the change of variable $\lambda\mapsto\lambda|v|_\A$, we obtain
$$=\int_{\A_f^\times} \Psi_f(v)|v|_\A^{2\sigma-1}d^\times v \int_0^\infty d^\times \lambda(\lambda|v|_\A)^{2\sigma}\max(\lambda|v|_\A,\lambda^{-1}|v|_\A^{-1})^{-N}\int_0^{\lambda^{-1}}\zeta^{2\sigma-1}d^\times \zeta\int_{k_\infty^1}\Psi_\infty (\uu{z}w)d^\times w$$
$$=\int_{\A_f^\times} \Psi_f(v)|v|_\A^{4\sigma-1}d^\times v \int_0^\infty \lambda^{2\sigma}\max(\lambda|v|_\A,\lambda^{-1}|v|_\A^{-1})^{-N}d^\times \lambda \int_0^{\lambda^{-1}}\zeta^{2\sigma-1}d^\times\zeta \int_{k_\infty^1}\Psi_\infty(\uu{\zeta}w)d^\times w.$$

We fix a parameter $T>0$.   Let $$g(\zeta):=\int_{k_\infty^1}\Psi_\infty(\uu{\zeta}w) d^\times w.$$ Then there are two types of bounds for $g(\zeta)$ depending on $T$. One is for $\zeta >T$, i.e., away from $0$, and the other for $0 < \zeta < T$, that is, in a neighborhood of $0$.  If $\zeta>T$, then we have the following easy bound: 

\begin{lem}
Let $M'> 1$. Then we have $g(\zeta) \ll \zeta^{-M'}$ for $\zeta >T$. 
\end{lem}
\begin{proof} For $M > M'n$ we have 
    $\Psi_\infty(x_1,\cdots,x_n)\ll\frac{1}{x_1^M+\cdots+x_n^M}$ where $w=(x_1,\cdots,x_n)$, hence 
            $$g(\zeta)\ll \zeta^{-\frac{M}{n}}\underbrace{\int_{k_\infty^1}\frac{d^\times w}{x_1^M+\cdots+x_n^M}}_{\text{converges}}\ll \zeta^{-\frac{M}{n}} \leq \zeta^{-M'}.$$
\end{proof}

We will prove the following lemma in the Subsection \ref{sec:lemma}: 
\begin{lem}\label{hyp:alpha}
    For all $\alpha >0$, $\zeta^\alpha g(\zeta)$ is bounded for $\zeta < T$. 
\end{lem}

With these lemmas at hand, we proceed as follows:
$$\int_{\A_f^\times} \Psi_f(v)|v|_\A^{4\sigma-1}d^\times v \int_0^\infty \lambda^{2\sigma}\max(\lambda|v|_\A,\lambda^{-1}|v|_\A^{-1})^{-N}d^\times \lambda \int_0^{\lambda^{-1}}\zeta^{2\sigma-1} g(\zeta)d^\times\zeta$$
$$=\int_{\A_f^\times} \Psi_f(v)|v|_\A^{4\sigma-1}d^\times v \int_{T^{-1}}^\infty \lambda^{2\sigma}\max(\lambda|v|_\A,\lambda^{-1}|v|_\A^{-1})^{-N}d^\times \lambda \int_0^{\lambda^{-1}}\zeta^{2\sigma-1}g(\zeta)d^\times\zeta$$
$$+\int_{\A_f^\times}\Psi_f(v)|v|_\A^{4\sigma-1}d^\times v\int_0^{T^{-1}}\lambda^{2\sigma}\max(\lambda|v|_\A,\lambda^{-1}|v|_\A^{-1})^{-N}d^\times\lambda \int_0^{\lambda^{-1}}\zeta^{2\sigma-1}g(\zeta)d^\times\zeta$$
\begin{enumerate}
    \item If $\lambda>T^{-1}$, i.e., $\lambda^{-1}<T$, then since $0< \zeta\le \lambda^{-1}<T$, $g(\zeta)\ll\zeta^{-\alpha}$. Hence, the first integral is bounded by 
        $$\int_{\A_f^\times} \Psi_f(v)|v|_\A^{4\sigma-1}d^\times v \int_{T^{-1}}^\infty \lambda^{2\sigma}\max(\lambda|v|_\A,\lambda^{-1}|v|_\A^{-1})^{-N}d^\times \lambda \int_0^{\lambda^{-1}}\zeta^{2\sigma-1}g(\zeta)d^\times\zeta$$
        $$\ll \int_{\A_f^\times} \Psi_f(v)|v|_\A^{4\sigma-1}d^\times v \int_{T^{-1}}^\infty \lambda^{2\sigma}\max(\lambda|v|_\A,\lambda^{-1}|v|_\A^{-1})^{-N}d^\times \lambda \int_0^{\lambda^{-1}}\zeta^{2\sigma-1-\alpha}d^\times\zeta$$
        $$=\int_{\A_f^\times} \Psi_f(v)|v|_\A^{4\sigma-1}d^\times v \int_{T^{-1}}^\infty \lambda^{2\sigma}\max(\lambda|v|_\A,\lambda^{-1}|v|_\A^{-1})^{-N}d^\times \lambda \frac{(\lambda^{-1})^{2\sigma-1-\alpha}}{2\sigma-\alpha-1}$$
        $$\ll \int_{\A_f^\times}\Psi_f(v)|v|_\A^{4\sigma-1}d^\times v \int_{T^{-1}}^\infty \lambda^{\alpha+1}\max(\lambda|v|_\A,\lambda^{-1}|v|_\A^{-1})^{-N}d^\times\lambda$$
        $$=\int_{|v|_\A^{-1}>T^{-1}}\Psi_f(v)|v|_\A^{4\sigma-1}d^\times v\left(\underbrace{\int_{|v|_\A^{-1}}^\infty \lambda^{\alpha+1}\lambda^{-N}|v|_\A^{-N}d^\times \lambda}_{\lambda>|v|_\A^{-1}\iff\lambda|v|_\A>\lambda^{-1}|v|_A^{-1}} + \underbrace{\int_{T^{-1}}^{|v|_\A^{-1}}\lambda^{\alpha+1}\lambda^N|v|_\A^N d^\times \lambda}_{\lambda<|v|_\A^{-1}\iff\lambda|v|_\A<\lambda^{-1}|v|_a^{-1}}\right)$$
        $$+\int_{|v|_\A^{-1}<T^{-1}}\Psi_f(v)|v|_\A^{4\sigma-1}d^\times v \underbrace{\int_{T^{-1}}^\infty \lambda^{\alpha+1}\lambda^{-N}|v|_\A^{-N}d^\times \lambda}_{\lambda>T^{-1}>|v|_\A^{-1}\implies\lambda|v|_\A>\lambda^{-1}|v|_\A^{-1}}$$
        $$=\int_{|v|_\A<T}\Psi_f(v)|v|_\A^{4\sigma-1} d^\times v\left(|v|_\A^{-N}\int_{|v|_\A^{-1}}^\infty \lambda^{\alpha+1-N}d^\times\lambda+|v|_\A^N\int_{T^{-1}}^{|v|_\A^{-1}}\lambda^{\alpha+1+N}d^\times\lambda\right)$$
        $$+\int_{|v|_\A>T}\Psi_f(v)|v|_\A^{4\sigma-1}d^\times v |v|_\A^{-N}\int_{T^{-1}}^\infty \lambda^{\alpha+1-N}d^\times\lambda$$
        $$=\int_{|v|_\A<T}\Psi_f(v)|v|_\A^{4\sigma-1}d^\times v\left(-|v|_\A^{-N}\frac{(|v|_\A^{-1})^{\alpha+1-N}}{\alpha+1-N}+|v|_\A^N\frac{(|v|_\A^{-1})^{\alpha+1+N}-(T^{-1})^{\alpha+1+N}}{\alpha+1+N}\right)$$
        $$-\int_{|v|_\A>T}\Psi_f(v)|v|_\A^{4\sigma-1}d^\times v\ |v|_\A^{-N}\frac{(T^{-1})^{\alpha+1-N}}{\alpha+1-N}$$
        $$=\int_{|v|_\A<T}\Psi_f(v)|v|_\A^{4\sigma-1}d^\times v\left(\frac{|v|_\A^{-\alpha-1}}{N-\alpha-1}+\frac{|v|_\A^{-\alpha-1}-T^{-\alpha-1}(T^{-1}|v|_\A)^N}{\alpha+1+N}\right)$$
        $$+\int_{|v|_\A>T}\Psi_f(v)|v|_\A^{4\sigma-1}d^\times v\ \frac{T^{-\alpha-1}}{N-\alpha-1}T^N|v|_\A^{-N}$$
        $$\ll\int_{|v|_\A<T}\Psi_f(v)|v|_\A^{4\sigma-\alpha-2}d^\times v+\int_{|v|_\A<T}\Psi_f(v)|v|_\A^{4\sigma-1}d^\times v+\int_{|v|_\A>T}\Psi_f(v)|v|_\A^{4\sigma-1}d^\times v.$$
        Hence, this is convergent for $4\sigma-1>1$ and $4\sigma-\alpha-2>1$, i.e., $\sigma>\frac{1}{2}$ and $\sigma>\frac{3+\alpha}{4}$. Since $\alpha > 0 $ is arbitary, we obtain $\sigma > \frac{3}{4}.$
    \item If $\lambda<T^{-1}$, i.e., $\lambda^{-1}>T$, we obtain 
        $$\int_{\A_f^\times}\Psi_f(v)|v|_\A^{4\sigma-1}d^\times v\int_0^{T^{-1}}\lambda^{2\sigma}\max(\lambda|v|_\A,\lambda^{-1}|v|_\A^{-1})^{-N}d^\times\lambda \int_0^{\lambda^{-1}}\zeta^{2\sigma-1}g(\zeta)d^\times\zeta$$
        $$=\int_{\A_f^\times}\Psi_f(v)|v|_\A^{4\sigma-1}d^\times v\int_0^{T^{-1}}\lambda^{2\sigma}\max(\lambda|v|_\A,\lambda^{-1}|v|_\A^{-1})^{-N}d^\times\lambda$$
        $$\left(\int_0^T\zeta^{2\sigma-1}g(\zeta)d^\times\zeta+\int_T^{\lambda^{-1}}\zeta^{2\sigma-1}g(\zeta)d^\times\zeta\right)$$
        $$\ll \int_{\A_f^\times}\Psi_f(v)|v|_\A^{4\sigma-1}d^\times v\int_0^{T^{-1}}\lambda^{2\sigma}\max(\lambda|v|_\A,\lambda^{-1}|v|_\A^{-1})^{-N}d^\times\lambda$$
        $$\left(\int_0^T\zeta^{2\sigma-\alpha-1}d^\times\zeta+\int_T^{\lambda^{-1}}\zeta^{2\sigma-M'-1}d^\times\zeta\right)$$
        $$=\int_{\A_f^\times}\Psi_f(v)|v|_\A^{4\sigma-1}d^\times v\int_0^{T^{-1}}\lambda^{2\sigma}\max(\lambda|v|_\A,\lambda^{-1}|v|_\A^{-1})^{-N}d^\times\lambda$$
        $$\left(\frac{T^{2\sigma-\alpha-1}}{2\sigma-\alpha-1}+\frac{(\lambda^{-1})^{2\sigma-M'-1}-T^{2\sigma-M'-1}}{2\sigma-M-1}\right)$$
        $$\ll \int_{\A_f^\times}\Psi_f(v)|v|_\A^{4\sigma-1}d^\times v\underbrace{\int_0^{T^{-1}}\lambda^{2\sigma}\underbrace{\max(\lambda|v|_\A,\lambda^{-1}|v|_\A^{-1})}_{\ge1}\,^{-N}d^\times\lambda}_{\ll (T^{-1})^{2\sigma}}$$
        $$+\int_{\A_f^\times}\Psi_f(v)|v|_\A^{4\sigma-1}d^\times v\underbrace{\int_0^{T^{-1}}\lambda^{M'+1}\max(\lambda|v|_\A,\lambda^{-1}|v|_\A^{-1})^{-N}d^\times\lambda}_{\ll(T^{-1})^{M'+1}}$$
        $$\ll\int_{\A_f^\times}\Psi_f(v)|v|_\A^{4\sigma-1}d^\times v.$$
    This last integral is convergent for $4\sigma-1>1$, i.e., $\sigma>\frac{1}{2}$.
\end{enumerate}

This finishes the proof of Theoerem \ref{thm:cuspJ}.  
\end{proof}

\subsection{The proof of Lemma \ref{hyp:alpha}}\label{sec:lemma}

In this subsection we will prove Lemma \ref{hyp:alpha}. We will start with a couple of prepataroy lemmas. 

\begin{lem}\label{lema:1} 
    For $\alpha_1, \dots, \alpha_n, \beta_1, \dots, \beta_n$ positive real numbers, let 
    $$
    g_n(\alpha_1, \dots, \alpha_n, \beta_1, \dots, \beta_n) = \int_{\alpha_1}^{\beta_1} \int_{\frac{\alpha_2}{x_1}}^{\beta_2} \cdots 
    \int_{\frac{\alpha_{n-1}}{x_1 \cdots x_{n-2}}}^{\beta_{n-1}}
    \int_{\frac{\alpha_n}{x_1 \cdots x_{n-1}}}^{\beta_n} \frac{dx_n}{x_n} \cdots \frac{dx_1}{x_1}. 
    $$
    Then $g_n(\alpha_1, \dots, \alpha_n, \beta_1, \dots, \beta_n)$ is a polynomial of total degree at most $n$ in 
    $\log \alpha_1$, ..., $\log \alpha_n$, $\log \beta_1$, ... , $\log \beta_n$. 
\end{lem}
\begin{proof}
    This is done by induction. The result is obvious for $n=1$. To go from $n-1$ to $n$, we use  
    $$
    g_n(\alpha_1, \dots, \alpha_n, \beta_1, \dots, \beta_n) = \int_{\alpha_1}^{\beta_1} g_{n-1}(\frac{\alpha_2}{x}, \dots, \frac{\alpha_n}{x}, \beta_2, \dots, \beta_n)\, \frac{dx}{x}. 
    $$
\end{proof}

Let $V \subset \R_{>0}^n$ be the set of $(x_1, \dots, x_n)$ such that $\prod_i x_i =1$. For each $k \geq 1$, let $I_k(\zeta) = [0, \zeta^{-k}]^n - [0, \zeta^{-(k-1)}]^n$, and set $I_k^1(\zeta) = I_k(\zeta) \cap V$. 
\begin{lem}
Let $$
f_k(\zeta) = \int_{I_k^1(\zeta)} d^\times w. 
$$
Then 
    $$
    f_k(\zeta) = O(k^{n-1}|\log \zeta|^{n-1}). 
    $$
\end{lem}
\begin{proof}
    Since we are integrating over $I_k^1(\zeta)$, at least one of the variables, say $x_n$, in the interval $[\zeta^{-(k-1)}, \zeta^{-k}]$. Since $x_n = (x_1 \cdots x_{n-1})^{-1}$, this means  
    \begin{equation}\label{hard-integral}
    f_k(\zeta) \leq n \underset{\zeta^k < x_1 \cdots x_{n-1} < \zeta^{k-1}}{\int_0^{\zeta^{-k}} \cdots \int_{0}^{\zeta^{-k}}}  \frac{dx_{n-1}}{x_{n-1}} \cdots \frac{dx_1}{x_1}. 
    \end{equation}
     We need to understand this last integral. We first examine the $x_{n-1}$ integral. The inequality 
    $$
    \zeta^k < x_1 \cdots x_{n-1} < \zeta^{k-1}
    $$
    and the bound $0 < x_{n-1} < \zeta^{-k}$
    imply 
    $$
    \frac{\zeta^{k}}{x_1\cdots x_{n-2}} < x_{n-1} < \min(\frac{\zeta^{k-1}}{x_1\cdots x_{n-2}}, \zeta^{-k}),
    $$
    with the additional condition that 
    $$
    \frac{\zeta^{k}}{x_1\cdots x_{n-2}} < \min(\frac{\zeta^{k-1}}{x_1\cdots x_{n-2}}, \zeta^{-k}). 
    $$
    There are two cases to consider: 

\

 Case I:     If the minimum is $\frac{\zeta^{k-1}}{x_1\cdots x_{n-2}}$, then we get the inequality 
    $$
    x_{1}\cdots x_{n-2} > \zeta^{2k-1}. 
    $$
    This means 
    $$
     x_{n-2}> \frac{\zeta^{2k-1}}{x_1 \cdots x_{n-3}}, x_{n-2} < \zeta^{-k}.
    $$
    This gives $x_1 \cdots x_{n-3} > \zeta^{3k-1}$. 
    Repating this process we see that this portion contributes the integral 
    $$
    \int_{\zeta^{(n-1)k-1}}^{\zeta^{-k}} \int_{\frac{\zeta^{(n-2)k-1}}{x_1}}^{\zeta^{-k}} \cdots \int_{\frac{\zeta^{2k-1}}{x_1 \cdots x_{n-3}}}^{\zeta^{-k}}\int_{\frac{\zeta^k}{x_1 \cdots x_{n-2}}}^{\frac{\zeta^{k-1}}{x_1 \cdots x_{n-2}}}\frac{dx_{n-1}}{x_{n-1}} \cdots \frac{dx_1}{x_1}. 
    $$
    
\

Case II: If the minimum is $\zeta^{-k}$, we get the two inequalities 
    $$
    \frac{\zeta^{k}}{x_1 \cdots x_{n-2}} < x_{n-1} < \zeta^{-k}, \quad \zeta^{2k} < x_1 \cdots x_{n-2} < \zeta^{2k-1}. 
    $$
    This contributes the integral 
$$
\underset{\zeta^{2k} < x_1 \cdots x_{n-2} < \zeta^{2k-1}}{\int_0^{\zeta^{-k}} \cdots  \int_{0}^{\zeta^{-k}} } \int_{\frac{\zeta^{k}}{x_1\cdots x_{n-2}}}^{\zeta^{-k}}  \frac{dx_{n-1}}{x_{n-1}} \cdots \frac{dx_1}{x_1}.
$$

\

Repeating this process combined with an easy induction shows that the integral in Eq. \eqref{hard-integral} is a sum of integrals of the form 
    $$
    \int_{\zeta^{(n-1)k-1}}^{\zeta^{-k}} \int_{\frac{\zeta^{(n-2)k-1}}{x_1}}^{\zeta^{-k}} \cdots \int_{\frac{\zeta^{(l+2)k-1}}{x_1 \cdots x_{n-l-3}}}^{\zeta^{-k}} \int^{\frac{\zeta^{(l+1)k-1}}{x_1 \cdots x_{n-l-2}}}_{\frac{\zeta^{(l+1)k}}{x_1 \cdots x_{n-l-2}}} \int_{\frac{\zeta^{lk}}{x_1 \cdots x_{n-l -1}}}^{\zeta^{-k}} 
    $$
    $$
    \int_{\frac{\zeta^{(l-1)k}}{x_1 \cdots x_{n-l }}}^{\zeta^{-k}} \cdots \int_{\frac{\zeta^{2k}}{x_1 \cdots x_{n-3 }}}^{\zeta^{-k}} \int_{\frac{\zeta^{k}}{x_1 \cdots x_{n-2 }}}^{\zeta^{-k}} \frac{dx_{n-1}}{x_{n-1}} \cdots \frac{dx_1}{x_1}
    $$
 for $l = 1, \dots, n-1$. With the notations of Lemma \ref{lema:1}, this last integral is equal to 
 
 $$
 g_{n-1}({\zeta^{(n-1)k-1}}, {\zeta^{(n-2)k-1}}, \dots {{\zeta^{(l+2)k-1}}}, {\zeta^{(l+1)k-1}} , {{\zeta^{lk}}}, 
    {\zeta^{(l-1)k}}, \dots ,  \zeta^{2k}, \zeta^{k}; \zeta^{-k}, \dots, \zeta^{-k} ) 
    $$
    $$
    - g_{n-1}({\zeta^{(n-1)k-1}}, {\zeta^{(n-2)k-1}}, \dots , {{\zeta^{(l+2)k-1}}}, {{\zeta^{(l+1)k}}},  {{\zeta^{lk}}}, 
    {\zeta^{(l-1)k}} , \dots , \zeta^{2k}\zeta^{k}; \zeta^{-k}, \dots, \zeta^{-k} ).
 $$
 The result now follows from Lemma \ref{lema:1}.   
\end{proof}

We can now prove Lemma \ref{hyp:alpha}.

\begin{proof}[Proof of Lemma \ref{hyp:alpha}]
We will prove the following stronger result: As before, let $V \subset \R_{>0}^n$ be the set of $(x_1, \dots, x_n)$ such that $\prod_i x_i =1$. Let $$\tilde g(\zeta):=\int_{k_\infty^1}\Psi_\infty(\uu{\zeta}w) d^\times w.$$ 
Then $\tilde g(\zeta) \ll |\log \zeta|^{n-1}$. Lemma \ref{hyp:alpha} follows from this statement after integrating over the maximal compact subgroup of $k_\infty^1$.  

    We have 
    $$
    \tilde g(\zeta^n) \leq \sum_k \int_{I_k^1(\zeta)} |\Psi_\infty (\underline \zeta^n w)| d^\times w \leq \sum_k f_k(\zeta) \cdot \max_{w \in I_k^1(\zeta)} |\Psi_\infty(w)|  
    $$
    $$
    \ll |\log \zeta|^{n-1} \sum_k k^{n-1}  \max_{w \in I_k^1(\zeta)} |\Psi_\infty(w)|. 
    $$
    For $M$ large, we have 
    $$
    \max_{w \in I_k^1(\zeta)} |\Psi_\infty(w)| \ll \max_{(x_1, \dots, x_n) \in I_k(\zeta)} \frac{1}{(1 + |x_1| + \cdots + |x_n|)^M} \ll \frac{1}{(1+\zeta^{-(k-1)})^M}. 
    $$
    Putting everything together, 
    $$
    \tilde g(\zeta^n) \ll |\log \zeta|^{n-1} \sum_k  \frac{k^{n-1}}{(1+\zeta^{-(k-1)})^M}. 
    $$
    The last series converges for any $M>0$ and the result follows. 
    \end{proof}

\section{Eisenstein series}\label{sect:eis}

\subsection{The function $I(s, \Phi, \varphi)$ when $\varphi$ is an Eisenstein series}\label{sect:eisI}

\begin{thm}\label{thm:eisI}
    Let $\varphi$ be the Eisenstein series $E(\tau, g)$ with $\Re \tau \geq 0$. Then $I(s, \Phi, \varphi)$ converges absolutely and uniformly on compact subsets of $\Re s \gg 0$. It has an analytic continuation as a meromorphic function on the whole complex plane with poles explicitly given by Proposition \ref{prop:explicit}. 
\end{thm}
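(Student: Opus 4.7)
The plan is to exploit the Fourier expansion
\[
E(\tau, g) = \chi_P(g)^{\tau+1/2} + \frac{\Lambda(2\tau)}{\Lambda(2\tau+1)}\chi_P(g)^{-\tau+1/2} + \frac{1}{\zeta(2\tau+1)}\sum_{\alpha \in k^\times} W_\tau\!\left(g\begin{pmatrix}\alpha \\ & 1\end{pmatrix}\right),
\]
which splits $E(\tau, g)$ into a ``Whittaker part'' $E_W(\tau, g)$ (the last sum) plus two ``degenerate'' terms $\chi_P(g)^{\pm\tau+1/2}$. Correspondingly, I would write $I(s, \Phi, E(\tau)) = I(s, \Phi, E_W(\tau)) + I_{\rm deg}^+(s, \Phi, \tau) + \frac{\Lambda(2\tau)}{\Lambda(2\tau+1)} I_{\rm deg}^-(s, \Phi, \tau)$ and analyze each piece.

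For the Whittaker part, the key observation is that the constant term of $E_W(\tau, g)$ along $N$ vanishes, so the arguments used in the proof of Theorem \ref{thm:cuspI} to kill $I_0$ and $I_1$ go through verbatim: both of those vanishing statements only used that the $c$-integral of $\varphi$ against the trivial character is zero. For $I_2$, the same unfolding carries through and produces a formula identical to Lemma \ref{lem:I}, but with $W_\varphi$ replaced by the global Whittaker function $W_\tau$. Since $W_\tau$ is a product of local normalized Whittaker functions for the principal series $\mathrm{Ind}_B^G(|\cdot|^\tau \otimes |\cdot|^{-\tau})$, Lemma \ref{lem:fundamental} applies directly and furnishes an entire continuation for this contribution (the factor $1/\zeta(2\tau+1)$ is a $\tau$-pole, not an $s$-pole).

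For each of the two degenerate contributions $I_{\rm deg}^{\pm}(s, \Phi, \tau)$, I would substitute $\chi_P(g)^{\pm\tau+1/2}$ in place of $\varphi$ in the definition of $I$ and unfold using the same Iwasawa/$B_\A^0$ decomposition as in Theorem \ref{thm:cuspI}. Now the $c$-integrals no longer vanish, because $\chi_P$ is $N$-invariant, so the $S_0$, $S_1$ and $S_2$ orbit contributions all survive. In each orbit the resulting integral factors into: (i) an elementary Mellin-type integral in $\lambda$ on $(0,1)$, which is responsible for the explicit poles in $s$, (ii) classical Tate-style global zeta integrals of marginal restrictions and Fourier transforms of $\Phi$ (along the $B_k$-invariant coordinate directions $(0,0,0,*)$ and $(0,0,*,0)$), and (iii) completed Dedekind zeta factors $\Lambda(\cdot)$ coming from the $(\A^1/k^\times)^2$-integrations against $\chi_P(g)^{\pm\tau+1/2}$. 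Assembling these pieces gives a finite sum of explicit meromorphic functions whose poles and residues can be read off, matching the function $H(s, \Phi, \tau)$ of Proposition \ref{prop:explicit}. The Eisenstein functional equation \eqref{eis-funct} checks that the two degenerate contributions combine in a way consistent with the symmetry $\tau \leftrightarrow -\tau$.

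The main obstacle is the degenerate-term analysis: the function $\chi_P(g)^{\pm\tau+1/2}$ decays only polynomially along the Borel, so convergence of the unfolded integrals for $\Re s \gg 0$ must be established carefully, and the absolutely convergent region then has to be enlarged by Tate's method, keeping precise track of each pole. In particular, the splitting at $|\det g| = 1$ interacts non-trivially with the Tate zeta integrals and produces the ``local'' simple poles from the $\lambda$-Mellin transform; one must verify that the combined residue agrees with $H(s, \Phi, \tau)$ as stated in Proposition \ref{prop:explicit}, and that the only poles in $s$ arise from this controlled source and not from the Whittaker piece.
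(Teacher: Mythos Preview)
Your strategy matches the paper's: split $E(\tau, g)$ into the Whittaker part plus the constant term $\varphi_N$, treat the Whittaker piece via the argument of Theorem~\ref{thm:cuspI} and Lemma~\ref{lem:fundamental}, and compute the $\varphi_N$ contribution explicitly as Tate integrals. However, the degenerate-term analysis as you describe it has a genuine gap.

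A minor point first: the individual pieces $E_W$ and $\chi_P^{\pm\tau+1/2}$ are not right $G_k$-invariant, so you cannot ``substitute $\chi_P(g)^{\pm\tau+1/2}$ in place of $\varphi$ in the definition of $I$'', since $I$ is an integral over $G_\A/G_k$. In particular your argument for $I_0$ on $E_W$ is ill-posed; $I_0$ must be handled for the full Eisenstein series, where it vanishes by orthogonality to constants. The split is only legitimate \emph{after} unfolding $I_1$ and $I_2$ to integrals over $G_\A^1/B_k$ and $G_\A^1/A_k$, where $B_k$-invariance of $\varphi_N$ suffices. The paper does exactly this.

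The main gap is that once you insert $\varphi_N$ into the unfolded $I_1$ and $I_2$, the resulting $\lambda$-integrals do not converge absolutely: they unwind formally to Tate integrals $Z(F_\Phi, \tfrac{1}{3} \mp \tfrac{2\tau}{3})$ and $Z(f_\Phi, -2 \mp 2\tau)$, evaluated far outside the half-plane of absolute convergence. Saying the region ``has to be enlarged by Tate's method'' is not enough, because you must first give rigorous meaning to the split before you can invoke analytic continuation of the pieces; a sum of a convergent integral and a divergent one is not a priori the original convergent integral. The paper resolves this with an honest regularization: it inserts a smoothed Eisenstein series $\mathscr E(\psi, w; g)$ (following Shintani's Lemma~2.9), recovers $I^1(\Phi, \varphi)$ as $\lim_{w\to 1/2+}(w-\tfrac{1}{2})$ times the regularized integral, and then performs contour shifts in the auxiliary $z$-variable of $\mathscr E$ to extract precisely the polar terms appearing in Proposition~\ref{prop:explicit}. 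This regularization device is the essential technical ingredient missing from your proposal.
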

\begin{proof}
 By Equation \eqref{eq:int8} we have 
$$
I(s, \Phi, \varphi) = \int_0^1 z^{2s} I^1 (\Phi_z, \varphi) \, d^\times z. 
$$
Here for any Schwartz function $\Phi$, $\Phi_z(x) = \Phi(d(\underline{z^{-1}}) x)$. For any $\Phi$  
$$
I^1(\Phi, \varphi) =  \int_{G_\A^1/G_k}  \varphi(g)  \sum_{x \in S_k} \Phi(g^\iota \cdot x)dg.
$$

Let $\rho_0$
be the residue of $E(s, g)$ at $s=1/2$. The exact value of $\rho_0$ is not important here. We just need to know that it is non-zero. Let $\Psi$ be the space of all entire functions $\psi$ that satisfy 
$$
\sup_{c_1 < \Re s < c_2} (1 + |w|)^N |\psi(w)| < \infty, 
$$
for all $N>0$ and $-\infty < c_1 < c_2 < \infty$. For $\psi \in \Psi$, $\Re w > 1/2$, and $g \in G_\A$, set 
$$
\mathscr E(\psi, w; g) = \frac{1}{2\pi i}\oint_{\substack{\Re z = x_0\\1/2 < x_0 < \Re w} } \frac{\psi(z)}{w-z}    E(z, g) \, dz.
$$
Then in our normalization, Lemma 2.9 and its corollaries in \cite{Shintani} give the following statement: 

$$
\frac{1}{2}\rho_0 \psi\left(\frac{1}{2}\right)  I^1 (\Phi, \varphi) = \lim_{w \to \frac{1}{2}+} \left(w-\frac{1}{2}\right) I(\Phi, \varphi; \psi, w)
$$
where 
$$
I(\Phi, \varphi; \psi, w) = \int_{G_\A^1/G_k} \varphi(g) \mathscr E(\psi, w; g) \sum_{x \in S_k} \Phi(g^\iota x) \, dg. 
$$

Again, by Proposition \ref{prop:singular}, $S_k = S_{0,k} \coprod S_{1, k} \coprod S_{2,k}$, with 
$$
S_{0,k} = \{0\},
$$
$$
S_{1, k} = \coprod_{\alpha \in k^\times} (G_k/B_k) \cdot (0, 0, 0, \alpha) , 
$$
$$
S_{2, k} = \coprod_{\alpha \in k^\times} (G_k /A_k) \cdot (0, 0, \alpha, 0). 
$$
Set for any function $\omega$ on $G_\A^1/ G_k$ and any $0 \leq i \leq 2$  
$$
I_i(\Phi, \omega, w) =  \int_{G_\A^1/G_k} \omega(g) \mathscr E (\psi, w; g) \sum_{x \in S_{i, k}}\Phi(g^\iota \cdot x) dg, 
$$
suppressing the dependence on $\psi$ in the notation. We also let 
$$
I_i(\Phi, \omega) = \lim_{w\to 1/2+} \left(w-\frac{1}{2}\right)I_i(\Phi, \omega, w). 
$$

\

We have 
$$
I_0(\Phi, \varphi,w) = \Phi(0) \int_{G_\A^1/G_k} \varphi(g) \mathscr E(\psi, w; g) \, dg. 
$$
Since the integral of $\varphi = E(\tau)$ over $G_\A^1/G_k$ is zero, this identity implies that 
$$
\lim_{w \to 1/2+} \left(w-\frac{1}{2}\right)I_0(\Phi, \varphi, w) = 0, 
$$
and for that reason $S_0$ does not contribute to $I^1(\Phi, \varphi)$. So it suffices to analyze $I_1$ and $I_2$. Since by the Proof of Theorem \ref{thm:cuspI}, the contributions of $I_1(\Phi, \varphi-\varphi_N)$ and $I_2(\Phi, \varphi-\varphi_N)$ to $I(s, \Phi, \varphi)$ are entire, we have to consider $I_1(\Phi, \varphi_N, w)$ and $I_2(\Phi, \varphi_N, w)$.

\

Now we look at $I_1$. We have 
$$
I_1(\Phi, \varphi_N, w) = \int_{G_\A^1/G_k}  \varphi_N(g)\mathscr E (\psi, w; g)\sum_{x \in S_{1, k}} \Phi(g^\iota \cdot x) dg 
$$
$$
=\int_{G_\A^1/G_k}  \varphi_N(g)\mathscr E (\psi, w; g)\sum_{\gamma \in G_k/B_k} \sum_{\alpha \in k^\times}\Phi(g^\iota \cdot \gamma\cdot (0,0,0,\alpha)  ) dg 
$$
$$
=\int_{G_\A^1/B_k}  \varphi_N(g)\mathscr E (\psi, w; g) \sum_{\alpha \in k^\times} \Phi(g^\iota \cdot (0,0,0,\alpha) ) dg. 
$$
By Equation \eqref{eq:int7} this last expression is equal to 
$$
=\int_K\int_{B_\A^1/B_k}  \varphi_N( u g)\mathscr E (\psi, w; u g)\sum_{\alpha \in k^\times} \Phi(u^\iota  g^\iota \cdot (0,0,0,\alpha))  \, dg \, du  
$$
$$
= \int_{B_\A^1/B_k}  \varphi_N( g)\mathscr E (\psi, w; g)\sum_{\alpha \in k^\times} \widetilde \Phi(  g^\iota \cdot (0,0,0,\alpha))  \, dg 
$$
where $\widetilde \Phi$ is the average of $\Phi$ over $K$.  We now apply Equation \eqref{eq:int4} to obtain 
$$
 =\int_0^\infty d^\times \lambda \cdot \lambda^{-2} \int_{B_\A^0/B_k} db \,  \varphi_N( a(\underline{\lambda}) b) \mathscr E (\psi, w; a(\underline{\lambda})b) \sum_{\alpha \in k^\times} \widetilde\Phi( a(\underline{\lambda})^\iota b^\iota \cdot (0,0,0,\alpha))  
$$
If we apply Equation \eqref{eq:int3} we obtain 
$$
=\int_0^\infty d^\times \lambda \cdot \lambda^{-2} \int_{(\A^1/k^\times)^2} d^\times t_1 \, d^\times t_2 \int_{\A / k} dc  \, 
$$
$$
 \varphi_N(a(\underline{\lambda}) a(t_1, t_2) n(c) ) \mathscr E (\psi, w; a(\underline{\lambda}) a(t_1, t_2) n(c)) \sum_{\alpha \in k^\times}\widetilde\Phi( a(\underline{\lambda})^\iota a(t_1, t_2)^\iota n(c)^\iota \cdot (0,0,0,\alpha)) 
$$
$$
=\int_0^\infty d^\times \lambda \cdot \lambda^{-2} \int_{(\A^1/k^\times)^2} d^\times t_1 \, d^\times t_2 \int_{\A / k} dc  \, 
$$
$$
 \varphi_N(a(\underline{\lambda}))\mathscr E (\psi, w; a(\underline{\lambda}) a(t_1, t_2) n(c)) \sum_{\alpha \in k^\times}\widetilde\Phi( a(\underline{\lambda}) a(t_2^{-1}, t_1^{-1}) n(c) \cdot (0,0,0,\alpha)) 
$$
$$
=\int_0^\infty d^\times \lambda \cdot \lambda^{-2} \int_{(\A^1/k^\times)^2} d^\times t_1 \, d^\times t_2 \int_{\A / k} dc  \, 
$$
$$
 \varphi_N(a(\underline{\lambda}))\mathscr E (\psi, w; a(\underline{\lambda}) a(t_1, t_2) n(c)) \sum_{\alpha \in k^\times}\widetilde\Phi((0,0,0,\uu\lambda^{-3} t_1^{-2} t_2 \alpha))
$$
after using the explicit formula of $\varphi_N$. 
This last expression is equal to 
$$
=\int_0^\infty d^\times \lambda \cdot \lambda^{-2} \int_{(\A^1/k^\times)^2} d^\times t_1 \, d^\times t_2 
$$
$$
 \varphi_N(a(\underline{\lambda}))\mathscr E_N (\psi, w; a(\underline{\lambda}) a(t_1, t_2))  \sum_{\alpha \in k^\times}\widetilde\Phi((0,0,0,\uu\lambda^{-3} t_1^{-2} t_2 \alpha)),
$$
where 
$$
\mathscr E_N(\psi, w; g) = \int_{\A/k} \mathscr E(\psi, w; g n(c))  \, dc 
$$
$$
=\int_{\A/k} \frac{1}{2\pi i}\oint_{\substack{\Re z = x_0\\1/2 < x_0 < \Re w} } \frac{\psi(z)}{w-z}    E(z, g n(c)) \, dz \, dc 
$$
$$
= \frac{1}{2\pi i}\oint_{\substack{\Re z = x_0\\1/2 < x_0 < \Re w} } \frac{\psi(z)}{w-z}    E_N(z, g) \, dz.
$$

We then get 

$$
=\int_0^\infty d^\times \lambda \cdot \lambda^{-2} \int_{(\A^1/k^\times)^2} d^\times t_1 \, d^\times t_2 
$$
$$
 \varphi_N(a(\underline{\lambda}))\frac{1}{2\pi i}\oint_{\substack{\Re z = x_0\\1/2 < x_0 < \Re w} } \frac{\psi(z)}{w-z}    E_N(z, a(\uu \lambda)) \, dz  \sum_{\alpha \in k^\times}\widetilde\Phi((0,0,0,\uu\lambda^{-3} t_1^{-2} t_2 \alpha))
$$
\begin{align}\label{eq:26}
\begin{split}
= \mathrm{vol} (\A^1/k^\times) & \int_0^\infty d^\times \lambda \cdot \lambda^{-2} \int_{\A^1} d^\times t_2  \, \varphi_N(a(\underline{\lambda})) \widetilde\Phi(0,0,0,\uu\lambda^{-3} t_2)\\
& \frac{1}{2\pi i}\oint_{\substack{\Re z = x_0\\1/2 < x_0 < \Re w} } \frac{\psi(z)}{w-z}    E_N(z, a(\uu \lambda)) \, dz  
\end{split}
\end{align}

We have 
$$
\varphi_N(g) = \chi_B(g)^{1/2 + \tau} + \frac{\Lambda(2\tau)}{\Lambda(2\tau + 1)}\chi_B(g)^{1/2 - \tau}, 
$$
$$
E_N(z, g) = \chi_B(g)^{1/2 + z} + \frac{\Lambda(2z)}{\Lambda(2z + 1)}\chi_B(g)^{1/2 - z}. 
$$

With these explicit formulae at hand we have 
$$
\mathrm{vol} (\A^1 / k^\times) \frac{1}{2\pi i}\oint_{\substack{\Re z = x_0\\1/2 < x_0 < \Re w} } \frac{\psi(z)}{w-z}  \int_0^\infty d^\times \lambda \cdot \lambda^{-2} \int_{\A^1} d^\times t_2  \,
$$
$$
\left(\lambda^{1 + 2\tau} + \frac{\Lambda(2\tau)}{\Lambda(2\tau + 1)}\lambda^{1 - 2\tau} \right)\left(\lambda^{1 + 2z} + \frac{\Lambda(2z)}{\Lambda(2z + 1)}\lambda^{1 - 2z} \right)\widetilde\Phi(0,0,0,\uu\lambda^{-3} t_2)\ dz
$$

$$
=\mathrm{vol} (\A^1 / k^\times) \frac{1}{2\pi i}\oint_{\substack{\Re z = x_0\\1/2 < x_0 < \Re w} } \frac{\psi(z)}{w-z}  \int_0^\infty d^\times \lambda  \int_{\A^1} d^\times t_2  \,
$$
$$
\left(\lambda^{2\tau} + \frac{\Lambda(2\tau)}{\Lambda(2\tau + 1)}\lambda^{- 2\tau} \right)\left(\lambda^{2z} + \frac{\Lambda(2z)}{\Lambda(2z + 1)}\lambda^{- 2z} \right)\widetilde\Phi(0,0,0,\uu\lambda^{-3} t_2)\ dz
$$

$$
=\mathrm{vol} (\A^1 / k^\times) \frac{1}{2\pi i}\oint_{\substack{\Re z = x_0\\1/2 < x_0 < \Re w} } \frac{\psi(z)}{w-z}  \int_0^\infty d^\times \lambda \int_{\A^1} d^\times t_2  \,
$$
$$
\left(\lambda^{-2\tau/3} + \frac{\Lambda(2\tau)}{\Lambda(2\tau + 1)}\lambda^{2\tau/3} \right)\left(\lambda^{-2z/3} + \frac{\Lambda(2z)}{\Lambda(2z + 1)}\lambda^{2z/3} \right)\widetilde\Phi(0,0,0,\uu\lambda t_2)\ dz
$$

$$
=\mathrm{vol} (\A^1 / k^\times) \frac{1}{2\pi i}\oint_{\substack{\Re z = x_0\\1/2 < x_0 < \Re w} } \frac{\psi(z)}{w-z}  \int_{\A^\times}  d^\times t  \,
$$
$$
\left(|t|_\A^{-2\tau/3} + \frac{\Lambda(2\tau)}{\Lambda(2\tau + 1)}|t|_\A^{2\tau/3} \right)\left(|t|_\A^{-2z/3} + \frac{\Lambda(2z)}{\Lambda(2z + 1)}|t|_\A^{2z/3} \right)\widetilde\Phi(0,0,0,t)\ dz
$$

$$
= \mathrm{vol} (\A^1 / k^\times) \frac{1}{2\pi i}\oint_{\substack{\Re z = x_0\\1/2 < x_0 < \Re w} } \frac{\psi(z)}{w-z}  \int_{\A^\times} |t|_\A^{- 2\tau/3 - 2z /3}F_\Phi(t) \, d^\times t \ dz
$$

$$
+ \mathrm{vol} (\A^1 / k^\times) \frac{\Lambda(2\tau)}{\Lambda(2\tau + 1)} \frac{1}{2\pi i}\oint_{\substack{\Re z = x_0\\1/2 < x_0 < \Re w} } \frac{\psi(z)}{w-z}  \int_{\A^\times} |t|_\A^{2\tau/3 - 2z /3}F_\Phi(t)\, d^\times t\ dz
$$
$$
+ \mathrm{vol} (\A^1 / k^\times) \frac{1}{2\pi i}\oint_{\substack{\Re z = x_0\\1/2 < x_0 < \Re w} } \frac{\Lambda(2z)}{\Lambda(2z + 1)}  \frac{\psi(z)}{w-z}  \int_{\A^\times} |t|_\A^{-2 \tau/3 + 2z /3}F_\Phi(t)\, d^\times t\ dz
$$
$$
+ \mathrm{vol} (\A^1 / k^\times) \frac{\Lambda(2\tau)}{\Lambda(2\tau + 1)}\frac{1}{2\pi i}\oint_{\substack{\Re z = x_0\\1/2 < x_0 < \Re w} } \frac{\Lambda(2z)}{\Lambda(2z + 1)}  \frac{\psi(z)}{w-z}  \int_{\A^\times} |t|_\A^{2\tau/3 + 2z /3}F_\Phi(t)\, d^\times t\ dz,
$$

where in these integrals 
$$
F_\Phi(t) = \widetilde \Phi(0,0,0,t). 
$$

\

In the two integrals 
$$
\mathrm{vol} (\A^1 / k^\times) \frac{1}{2\pi i}\oint_{\substack{\Re z = x_0\\1/2 < x_0 < \Re w} } \frac{\psi(z)}{w-z}  \int_{\A^\times} |t|_\A^{- 2\tau/3 - 2z /3}F_\Phi(t) \, d^\times t \ dz
$$
and 
$$
\mathrm{vol} (\A^1 / k^\times) \frac{\Lambda(2\tau)}{\Lambda(2\tau + 1)} \frac{1}{2\pi i}\oint_{\substack{\Re z = x_0\\1/2 < x_0 < \Re w} } \frac{\psi(z)}{w-z}  \int_{\A^\times} |t|_\A^{2\tau/3 - 2z /3}F_\Phi(t)\, d^\times t\ dz, 
$$
shift the contour all the way to the left to make sure the inner integrals over $\A^\times$ converge. Since the resulting functions are holomorphic in $w$, if we multiply by $w-1/2$ and let $w \to 1/2+$, then the contribution will be zero. In the remaining integrals for $\Re w \gg 0$, 
$$
\mathrm{vol} (\A^1 / k^\times)\frac{1}{2\pi i}\oint_{\substack{\Re z = x_0\\1/2 < x_0 < \Re w} } \frac{\Lambda(2z)}{\Lambda(2z + 1)}  \frac{\psi(z)}{w-z}  \int_{\A^\times} |t|_\A^{-2 \tau/3 + 2z /3}F_\Phi(t)\, d^\times t
$$
and 
$$
 \mathrm{vol} (\A^1 / k^\times) \frac{\Lambda(2\tau)}{\Lambda(2\tau + 1)}\frac{1}{2\pi i}\oint_{\substack{\Re z = x_0\\1/2 < x_0 < \Re w} } \frac{\Lambda(2z)}{\Lambda(2z + 1)} \frac{\psi(z)}{w-z}  \int_{\A^\times} |t|_\A^{2\tau/3 + 2z /3}F_\Phi(t)\, d^\times t, 
$$
we shift the contour to the right all the way to the right to obtain 
$$
\mathrm{vol} (\A^1 / k^\times)\psi(w)\frac{\Lambda(2w)}{\Lambda(2w + 1)} Z(F_\Phi, -2 \tau/3 + 2w /3) 
$$
$$
+ \mathrm{vol} (\A^1 / k^\times) \psi(w) \frac{\Lambda(2\tau)}{\Lambda(2\tau + 1)} \frac{\Lambda(2w)}{\Lambda(2w + 1)}    Z(F_\Phi, 2\tau/3 + 2w /3) 
$$
$$
+\mathrm{vol} (\A^1 / k^\times)\frac{1}{2\pi i}\oint_{\substack{\Re z = x_0\\ x_0 \text{ large}} } \frac{\Lambda(2z)}{\Lambda(2z + 1)}  \frac{\psi(z)}{w-z}  Z(F_\Phi, -2 \tau/3 + 2w /3) \ dz
$$
$$
+ \mathrm{vol} (\A^1 / k^\times) \frac{\Lambda(2\tau)}{\Lambda(2\tau + 1)}\frac{1}{2\pi i}\oint_{\substack{\Re z = x_0\\ x_0 \text{ large}}} \frac{\Lambda(2z)}{\Lambda(2z + 1)} \frac{\psi(z)}{w-z}  Z(F_\Phi, 2\tau/3 + 2w /3)\ dz,  
$$
where for a Schwartz function $f$ on $\A$ and $s \in \C$, let 
$$
Z(f, s) = \int_\A f(x) |x|^s \, d^\times x. 
$$
By Tate's thesis this zeta function has an analytic continuation to a meromorphic function on the whole complex plane. We denote the analytic continuation of $Z(f, s)$ again by the same notation. Let 
$$
\gamma_k = \lim_{w \to 1/2+} (w-1/2) \Lambda(2w). 
$$
By the analytic class number formula we know that $\gamma_k$ is non-zero (and in fact, an explicitly given quantity, though the exact value is not important here). Multiply the above by $w-1/2$ and let $w \to 1/2+$ to obtain 
$$
\frac{\gamma_k \mathrm{vol} (\A^1 / k^\times)\psi(1/2)} {\Lambda(2)}  Z(F_\Phi, -2 \tau/3 + 1 /3) 
$$
$$
+ \frac{\gamma_k \mathrm{vol} (\A^1 / k^\times)\psi(1/2)} {\Lambda(2)}   \frac{\Lambda(2\tau)}{\Lambda(2\tau + 1)}    Z(F_\Phi, 2\tau/3 + 1 /3).
$$
If we choose $\psi$ so that $\psi(1/2) \ne 0$, then we obtain the following lemma: 

\begin{lem}\label{lem:15} There is a non-zero constant $C_k$, depending only on the number field $k$, such that 
    $$
I_1(\Phi, \varphi_N) =
C_k  Z(F_\Phi, -2 \tau/3 + 1 /3) 
+ C_k  \frac{\Lambda(2\tau)}{\Lambda(2\tau + 1)}    Z(F_\Phi, 2\tau/3 + 1 /3).
$$
\end{lem}

Next, we study 
$$
I_2(\Phi, \varphi, w)  = \int_{G_\A^1/G_k} \varphi_N(g)\mathscr E (\psi, w; g) \sum_{x \in S_{2, k}} \Phi(g^\iota \cdot x) dg. 
$$

We have 
$$
I_2(\Phi, \varphi, w) =\int_{G_\A^1/G_k}  \varphi_N(g)\mathscr E (\psi, w; g) \sum_{\gamma \in G_k/A_k} \sum_{\alpha \in k^\times} \Phi(g^\iota \gamma\cdot (0,0,\alpha,0))  dg 
$$
$$
=\int_{G_\A^1/A_k} \varphi_N(g)\mathscr E (\psi, w; g) \sum_{\alpha \in k^\times} \Phi(g^\iota \cdot (0,0,\alpha,0)  dg. 
$$
By Equation \eqref{eq:int7} this last expression is equal to 
 $$
\int_K \int_{B_\A^1/A_k} db \,  \varphi_N(ub)\mathscr E (\psi, w; ub)\sum_{\alpha \in k^\times}  \Phi(u^\iota b^\iota \cdot (0,0,\alpha, 0)) \, du 
$$
 $$
= \int_{B_\A^1/A_k} db \, \varphi_N(b)\mathscr E (\psi, w; b)  \sum_{\alpha \in k^\times}  \widetilde\Phi( b^\iota \cdot (0,0,\alpha, 0)).
$$
If we apply Equation \eqref{eq:int4} followed by Equation \eqref{eq:int3} we get 
$$
\int_{0}^\infty d^\times \lambda \, \lambda^{-2} \int_{B_\A^0 / A_k} db \, \varphi_N(a(\underline{\lambda}) b)\mathscr E (\psi, w; a(\underline{\lambda}) b) \sum_{\alpha \in k^\times} \widetilde\Phi(a(\underline{\lambda})^\iota b^\iota \cdot (0,0,\alpha, 0)) 
$$
$$
= \int_{0}^\infty d^\times \lambda \, \lambda^{-2} \int_{(\A^1/k^\times)^2} d^\times t_1 \, d^\times t_2 \int_{\A} dc  \,  
\varphi_N(a(\underline{\lambda}) a(t_1, t_2) n(c))\mathscr E (\psi, w; a(\underline{\lambda}) a(t_1, t_2) n(c)) 
$$
$$
\sum_{\alpha \in k^\times}  \widetilde\Phi(a(\underline{\lambda}) a(t_2^{-1}, t_1^{-1}) n(c) \cdot (0,0,\alpha, 0)) 
$$

$$
= \int_{0}^\infty d^\times \lambda \, \lambda^{-2} \int_{(\A^1/k^\times)^2} d^\times t_1 \, d^\times t_2 \int_{\A} dc  \, \varphi_N(a(\underline{\lambda})) 
$$
$$
\mathscr E (\psi, w; a(\underline{\lambda}) a(t_1, t_2) n(c)) \sum_{\alpha \in k^\times}\widetilde\Phi(0, 0, \underline\lambda^{-1} t_1^{-1} \alpha, \underline\lambda^{-3} t_1^{-2} t_2 c \alpha) 
$$
after using the explicit formula for $\varphi_N$. Write 
$$
\mathscr E = (\mathscr E - \mathscr E_N) + \mathscr E_N. 
$$

As $\mathscr E - \mathscr E_N$ vanishes at cusps, its contribution to the integral is entire. Next we examine the contribution of $\mathscr E_N$: 
$$
\mathscr E_N(\psi, w; g) 
= \frac{1}{2\pi i}\oint_{\substack{\Re z = x_0\\1/2 < x_0 < \Re w} } \frac{\psi(z)}{w-z}    E_N(z, g) \, dz.
$$
The contribution of the constant term is equal to 
$$
\int_{0}^\infty d^\times \lambda \, \lambda^{-2} \int_{(\A^1/k^\times)^2} d^\times t_1 \, d^\times t_2 \int_{\A} dc  \, \varphi_N(a(\underline{\lambda})) 
$$
$$
\mathscr E_N (\psi, w; a(\underline{\lambda}) a(t_1, t_2) n(c)) \sum_{\alpha \in k^\times}\widetilde\Phi(0, 0, \underline\lambda^{-1} t_1^{-1} \alpha, \underline\lambda^{-3} t_1^{-2} t_2 c \alpha) 
$$
$$
=\int_{0}^\infty d^\times \lambda \, \lambda^{-2} \int_{(\A^1/k^\times)^2} d^\times t_1 \, d^\times t_2 \int_{\A} dc  \, \varphi_N(a(\underline{\lambda})) 
$$
$$
\frac{1}{2\pi i}\oint_{\substack{\Re z = x_0\\1/2 < x_0 < \Re w} } \frac{\psi(z)}{w-z}    E_N(z, a(\underline{\lambda})) \, dz
\sum_{\alpha \in k^\times}\widetilde\Phi(0, 0, \underline\lambda^{-1} t_1^{-1} \alpha, \underline\lambda^{-3} t_1^{-2} t_2 c \alpha) 
$$
by using the formula for the constant term of $E(z, \cdot)$. With a change of variables in $c$ this is equal to 
$$
=\int_{0}^\infty d^\times \lambda \, \lambda \int_{(\A^1/k^\times)^2} d^\times t_1 \, d^\times t_2 \varphi_N(a(\underline{\lambda})) 
$$
$$
\frac{1}{2\pi i}\oint_{\substack{\Re z = x_0\\1/2 < x_0 < \Re w} } \frac{\psi(z)}{w-z}    E_N(z, a(\underline{\lambda})) \, dz
\sum_{\alpha \in k^\times} f_\Phi(\underline\lambda^{-1} t_1^{-1} \alpha) 
$$
with 
$$
f_\Phi(t) = \int_\A \widetilde\Phi(0, 0, t , c)\, dc.  
$$ 
This is equal to 

\begin{align}\label{I-J1}
\begin{split}
\mathrm{vol} (\A^1/k^\times) & \int_{0}^\infty d^\times \lambda \, \lambda \int_{\A^1} d^\times t_1 \, \varphi_N(a(\underline{\lambda})) \\
&  \frac{1}{2\pi i}\oint_{\substack{\Re z = x_0\\1/2 < x_0 < \Re w} } \frac{\psi(z)}{w-z}    E_N(z, a(\underline{\lambda})) \, dz \, f_\Phi(\underline\lambda^{-1} t_1^{-1}) 
\end{split}
\end{align}
$$
=\mathrm{vol} (\A^1/k^\times) \frac{1}{2\pi i}\oint_{\substack{\Re z = x_0\\1/2 < x_0 < \Re w} } \frac{\psi(z)}{w-z} \, dz \int_{0}^\infty d^\times \lambda \, \lambda \int_{\A^1} d^\times t_1 \, f_\Phi(\underline\lambda^{-1} t_1^{-1}) 
$$
$$
  \varphi_N(a(\underline{\lambda})) E_N(z, a(\underline{\lambda}))  
$$
$$
=\mathrm{vol} (\A^1/k^\times) \frac{1}{2\pi i}\oint_{\substack{\Re z = x_0\\1/2 < x_0 < \Re w} } \frac{\psi(z)}{w-z} \, dz \int_{0}^\infty d^\times \lambda \, \lambda^{-1} \int_{\A^1} d^\times t_1 \, f_\Phi(\underline\lambda t_1) 
$$
$$
  \varphi_N(a(\underline{\lambda^{-1}})) E_N(z, a(\underline{\lambda^{-1}}))  
$$
after a change of variables $\lambda \mapsto \lambda^{-1}$ and $t_1 \mapsto t_1^{-1}$. With the explicit formulae for constant terms at hand the last expression is equal to 
$$
=\mathrm{vol} (\A^1/k^\times) \frac{1}{2\pi i}\oint_{\substack{\Re z = x_0\\1/2 < x_0 < \Re w} } \frac{\psi(z)}{w-z} \, dz \int_{0}^\infty d^\times \lambda \, \lambda^{-1} \int_{\A^1} d^\times t_1 \, f_\Phi(\underline\lambda t_1) 
$$
$$
\left(\lambda^{-1- 2\tau} + \frac{\Lambda(2\tau)}{\Lambda(2\tau + 1)}\lambda^{-1+2 \tau} \right) \left( \lambda^{-1 - 2z} + \frac{\Lambda(2z)}{\Lambda(2z + 1)}\lambda^{-1+2 z}\right)
$$
$$
= \mathrm{vol} (\A^1 / k^\times) \frac{1}{2\pi i}\oint_{\substack{\Re z = x_0\\1/2 < x_0 < \Re w} } \frac{\psi(z)}{w-z}  \int_{\A^\times} |t|_\A^{-3 - 2\tau - 2z}f_\Phi(t) \, d^\times t \ dz
$$
$$
+ \mathrm{vol} (\A^1 / k^\times) \frac{\Lambda(2\tau)}{\Lambda(2\tau + 1)} \frac{1}{2\pi i}\oint_{\substack{\Re z = x_0\\1/2 < x_0 < \Re w} } \frac{\psi(z)}{w-z}  \int_{\A^\times} |t|_\A^{-3 + 2 \tau - 2 z}f_\Phi(t)\, d^\times t\ dz
$$
$$
+ \mathrm{vol} (\A^1 / k^\times) \frac{1}{2\pi i}\oint_{\substack{\Re z = x_0\\1/2 < x_0 < \Re w} } \frac{\Lambda(2z)}{\Lambda(2z + 1)} \frac{\psi(z)}{w-z}  \int_{\A^\times} |t|_\A^{-3 - 2 \tau + 2 z}f_\Phi(t)\, d^\times t\ dz
$$
$$
+ \mathrm{vol} (\A^1 / k^\times) \frac{\Lambda(2\tau)}{\Lambda(2\tau + 1)}\frac{1}{2\pi i}\oint_{\substack{\Re z = x_0\\1/2 < x_0 < \Re w} }\frac{\Lambda(2z)}{\Lambda(2z + 1)}  \frac{\psi(z)}{w-z}  \int_{\A^\times} |t|_\A^{-3 + 2 \tau + 2 z}f_\Phi(t)\, d^\times t\ dz.
$$
In the integrals 
$$
 \mathrm{vol} (\A^1 / k^\times) \frac{1}{2\pi i}\oint_{\substack{\Re z = x_0\\1/2 < x_0 < \Re w} } \frac{\psi(z)}{w-z}  \int_{\A^\times} |t|_\A^{-3 - 2\tau - 2z}f_\Phi(t) \, d^\times t 
$$
and 
$$
 \mathrm{vol} (\A^1 / k^\times) \frac{\Lambda(2\tau)}{\Lambda(2\tau + 1)} \frac{1}{2\pi i}\oint_{\substack{\Re z = x_0\\1/2 < x_0 < \Re w} } \frac{\psi(z)}{w-z}  \int_{\A^\times} |t|_\A^{-3 + 2 \tau - 2 z}f_\Phi(t)\, d^\times t
$$
we shift the contour all the way to the left. Then if we multiply the expression by $w-1/2$ and let $w \to 1/2+$ the contribution is zero. In the remaining two integrals for $\Re w \gg 0$ we shift the contour all the way to the right and pick up a pole $z=w$. We obtain 
$$
\mathrm{vol} (\A^1 / k^\times)   \frac{\Lambda(2w)}{\Lambda(2w + 1)} \psi(w) Z(f_\Phi, -3 - 2 \tau + 2 w)  
$$
$$
+ \mathrm{vol} (\A^1 / k^\times) \frac{\Lambda(2\tau)}{\Lambda(2\tau + 1)}\frac{\Lambda(2w)}{\Lambda(2w + 1)} \psi(w)  Z(f_\Phi, -3 + 2 \tau + 2 w)   
$$
$$
+ \mathrm{vol} (\A^1 / k^\times) \frac{1}{2\pi i}\oint_{\substack{\Re z = x_0\\x_0 \text{ large}} } \frac{\Lambda(2z)}{\Lambda(2z + 1)} \frac{\psi(z)}{w-z}  \int_{\A^\times} |t|_\A^{-3 - 2 \tau + 2 z}f_\Phi(t)\, d^\times t\ dz
$$
$$
+ \mathrm{vol} (\A^1 / k^\times) \frac{\Lambda(2\tau)}{\Lambda(2\tau + 1)}\frac{1}{2\pi i}\oint_{\substack{\Re z = x_0\\x_0 \text{ large}} }\frac{\Lambda(2z)}{\Lambda(2z + 1)}  \frac{\psi(z)}{w-z}  \int_{\A^\times} |t|_\A^{-3 + 2 \tau + 2 z}f_\Phi(t)\, d^\times t\ dz.
$$

Multiply by $w-1/2$ and let $w \to 1/2+$ to obtain the following lemma: 

\begin{lem}\label{lem:16}
    With $C_k$ as in the previous lemma, 
    $$
    I_2(\Phi, \varphi_N) =  
    C_k Z(f_\Phi, -2 - 2 \tau)  
+ C_k  \frac{\Lambda(2\tau)}{\Lambda(2\tau + 1)}  Z(f_\Phi, -2 + 2 \tau). 
$$
\end{lem}

We now go back to analyzing $I(s, \Phi, \varphi)$ with  $\varphi = E(\tau)$. We know 
$$
I(\Phi, \varphi) \sim I(s, \Phi, \varphi_N) = \int_0^1 z^{2s} I^1(\Phi, \varphi_N)\, d^\times z, 
$$
where as before $\Phi_z(x) = \Phi(d(\underline{z^{-1}}))$. By the two lemmas, for $\Re s \gg 0$,  
$$
I(s, \Phi, \varphi_N) = C_k  \int_0^1 z^{2s} \, d^\times z \, Z(F_{\Phi_z}, -2 \tau/3 + 1 /3) 
$$
$$
+ C_k  \int_0^1 z^{2s} \, d^\times z \, \frac{\Lambda(2\tau)}{\Lambda(2\tau + 1)}    Z(F_{\Phi_z}, 2\tau/3 + 1 /3)
$$
$$
+ C_k \int_0^1 z^{2s} \, d^\times z \, Z(f_{\Phi_z}, -2 - 2 \tau)  
+ C_k  \int_0^1 z^{2s} \, d^\times z \, \frac{\Lambda(2\tau)}{\Lambda(2\tau + 1)}  Z(f_{\Phi_z}, -2 + 2 \tau).
$$
$$
= C_k  \int_0^1 z^{2s-2 \tau/3 + 1 /3} \, d^\times z \, Z(F_{\Phi}, -2 \tau/3 + 1 /3) 
$$
$$
+ C_k  \int_0^1 z^{2s+2\tau/3 + 1 /3} \, d^\times z \, \frac{\Lambda(2\tau)}{\Lambda(2\tau + 1)}    Z(F_{\Phi}, 2\tau/3 + 1 /3)
$$
$$
+ C_k \int_0^1 z^{2s-2 - 2 \tau} \, d^\times z \, Z(f_{\Phi}, -2 - 2 \tau)  
+ C_k  \int_0^1 z^{2s-2 + 2 \tau} \, d^\times z \, \frac{\Lambda(2\tau)}{\Lambda(2\tau + 1)}  Z(f_{\Phi}, -2 + 2 \tau). 
$$

We have proved the following proposition: 

\begin{prop}\label{prop:explicit} Let  
   $$
H(s, \Phi, \tau) = C_k   \frac{Z(F_{\Phi}, -2 \tau/3 + 1 /3)}{2s-2 \tau/3 + 1 /3} 
+ C_k   \frac{\Lambda(2\tau)}{\Lambda(2\tau + 1)}    \frac{Z(F_{\Phi}, 2\tau/3 + 1 /3)}{2s+2\tau/3 + 1 /3}
$$
$$
+ C_k  \frac{Z(f_{\Phi}, -2 - 2 \tau)}{2s-2 - 2 \tau} 
+ C_k  \frac{\Lambda(2\tau)}{\Lambda(2\tau + 1)}  \frac{Z(f_{\Phi}, -2 + 2 \tau)}{2s-2 + 2 \tau}.
$$ 
Then $I(s, \Phi, \varphi)- H(s, \Phi,\tau)$ is entire. 
\end{prop}

We can now easily read off the poles and their residues from the formula in this proposition. 

\end{proof}

\subsection{The function $J(s, \Phi, \varphi)$ when $\varphi$ is an Eisenstein series}\label{sect:eisJ}

In this subsection we study the analytic behavior of $J(s, \Phi, \varphi)$ when $\varphi = E(\tau)$. 

\begin{thm}\label{thm:eisJ}
    Let $\varphi$ be the Eisenstein series $E(\tau, g)$ with $\Re \tau \geq 0$. The function $J(s, \Phi, \varphi)$ converges absolutely and uniformly on compact subsets of $\Re s \gg 0$. Then if we set $\gamma = \Re \tau$, $J(s, \Phi, \varphi)$ has an analytic continuation to a meromorphic function on the domain $\Re s > 3/4$. 
     On this domain $J(s, \Phi, \varphi) - I(s, \Phi, \varphi)$ is holomorphic.
\end{thm}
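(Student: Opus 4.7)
The plan is to adapt the Shintani regularization strategy from the proof of Theorem \ref{thm:eisI} to $J(s,\Phi,\varphi)$. As in the proof of Theorem \ref{thm:cuspJ}, I would first write
$$
J(s, \Phi, \varphi) = \int_0^1 z^{2s}\, J^1(\Phi^z, \varphi) \, d^\times z,
$$
and then, as done for $I$, replace the Eisenstein series $\varphi = E(\tau)$ in $J^1$ by the averaged kernel $\mathscr E(\psi, w; g)$, so that $\tfrac12 \rho_0 \psi(\tfrac12)\, J^1(\Phi, \varphi)$ is the residue at $w = \tfrac12$ of an auxiliary $J(\Phi, \varphi; \psi, w)$ defined by inserting $\mathscr E(\psi, w; g)$ into the integrand. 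The key decomposition is $\mathscr E = (\mathscr E - \mathscr E_N) + \mathscr E_N$, which splits $J$ into a ``cuspidal'' part and a ``constant term'' part to be handled separately.

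For the $\mathscr E - \mathscr E_N$ piece, I would mimic the unfolding carried out in the proof of Theorem \ref{thm:cuspJ}. Because $\mathscr E - \mathscr E_N$ has vanishing constant term along $N$, the $S_0$ and $S_1$ contributions to $J^1$ drop out for exactly the same reason they did for cusp forms, and the $S_2$ contribution reduces via Whittaker expansion to $Z_J(s, \widehat{\Phi}_{\eta, 4}, \mathscr E - \mathscr E_N)$ as in Lemma \ref{lem:J}. Since each spectral component of $E(\tau)$ with $\Re\tau \geq 0$ satisfies the $\gamma$-Ramanujan hypothesis with $\gamma = \Re\tau$, Lemma \ref{lem:gamma} produces the analytic continuation of this part to precisely the domain stated in the theorem.

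For the $\mathscr E_N$ piece, I would compute by hand following the pattern of Lemmas \ref{lem:15} and \ref{lem:16}. Over $S_1$, the same argument that in the $I$ computation collapses the $(t_1, t_2)$ double integral against $F_\Phi(t) = \widetilde\Phi(0,0,0,t)$ goes through, since the only structural change is that $(g\cdot x)_4 = \lambda^{-3} t_2^2 t_1^{-1}\alpha$ replaces $(g^\iota \cdot x)_4 = \underline\lambda^{-3} t_1^{-2} t_2 \alpha$; this difference is absorbed by relabeling $t_1 \leftrightarrow t_2^{-1}$ in the integral over $(\A^1/k^\times)^2$. The same applies to $S_2$ with $f_\Phi(t) = \int_\A \widetilde\Phi(0,0,t,c)\,dc$. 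After Mellin inversion in $\lambda$ and the contour shifts used in Theorem \ref{thm:eisI} (shifting left kills two of the four terms in the product of constant terms, shifting right picks up the simple poles at $z = w$ and $z = \tfrac12$ from $\Lambda(2z)/\Lambda(2z+1)$), the $\mathscr E_N$-contribution to $J$ is the same meromorphic function $H(s, \Phi, \tau)$ that appeared in Proposition \ref{prop:explicit}.

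Putting these together, $J(s, \Phi, \varphi) - H(s, \Phi, \tau)$ is holomorphic on the domain provided by Lemma \ref{lem:gamma} with $\gamma = \Re\tau$; since $I(s, \Phi, \varphi) - H(s, \Phi, \tau)$ is entire by Proposition \ref{prop:explicit}, the difference $J(s,\Phi,\varphi) - I(s,\Phi,\varphi)$ is holomorphic on that domain. The main obstacle will be bookkeeping in the $\mathscr E_N$ computation: while the Shintani regularization step and the cuspidal analysis are nearly verbatim transfers of arguments already in the paper, verifying that the $S_1$- and $S_2$-contributions from $g$-action reproduce \emph{the same} Tate integrals of $F_\Phi$ and $f_\Phi$ that appear in $H(s, \Phi, \tau)$ requires carefully tracking the effect of replacing $g^\iota$ by $g$ through each unfolding and using the symmetry of the $(\A^1/k^\times)^2$ integration to identify the resulting expressions.
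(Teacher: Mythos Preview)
Your plan has a genuine gap in the treatment of the ``cuspidal'' piece. In $J(\Phi,\varphi;\psi,w)$ the integrand is $\varphi(g)\,\mathscr E(\psi,w;g)\sum_{x\in S_k}\Phi(gx)$ with $\varphi=E(\tau)$ still present; the kernel $\mathscr E$ is a \emph{multiplier}, not a replacement for $\varphi$. When you split $\mathscr E=(\mathscr E-\mathscr E_N)+\mathscr E_N$ and look at the first piece, the weight is the product $\varphi\cdot(\mathscr E-\mathscr E_N)$. The unfolding arguments of Theorem~\ref{thm:cuspJ} need that the weight itself has vanishing constant term (so the $S_1$ $c$-integral over $\A/k$ vanishes) and a Whittaker expansion compatible with Lemma~\ref{lem:gamma} (so the $S_2$ contribution becomes a single $Z_J$). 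Neither holds for the product: the constant term of $\varphi\cdot(\mathscr E-\mathscr E_N)$ is not zero (constant term of a product is not the product of constant terms), so $S_1$ does \emph{not} drop out; and there is no way to write $\varphi\cdot(\mathscr E-\mathscr E_N)$ as a single automorphic form in an irreducible $\pi$ whose Whittaker function can be fed into $Z_J$. Your $\mathscr E_N$-piece has the mirror problem on $S_2$: after unfolding, the $c$-integral is over $\A$ with both $\varphi(\dots n(c))$ and $\Phi(\dots c\dots)$ depending on $c$, so you cannot replace $\varphi$ by $\varphi_N$ and read off Lemmas~\ref{lem:15}--\ref{lem:16}.

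The paper's fix is to split the \emph{Eisenstein series} $\varphi$, not the regularizing kernel: write $\varphi=(\varphi-\varphi_N)+\varphi_N$. The non-constant part $\varphi-\varphi_N$ has vanishing constant term and the Whittaker expansion $\sum_{\alpha}W_\tau$, so the entire cusp-form unfolding of Theorem~\ref{thm:cuspJ} applies to it verbatim (no $\mathscr E$ needed), and Lemma~\ref{lem:gamma} with $\gamma=\Re\tau$ gives the analytic continuation on the stated domain. Only the constant-term part $\varphi_N$ requires the $\mathscr E$-regularization; for $J_1(\Phi,\varphi_N,w)$ the unfolding lands exactly on Equation~\eqref{eq:26}, and for $J_2(\Phi,\varphi_N,w)$ one makes the further split $\mathscr E=\mathscr E'+\mathscr E_N$, with the $\mathscr E'$ piece holomorphic by rapid decay (Equation~\eqref{eq:E'}) and the $\mathscr E_N$ piece equal to Equation~\eqref{I-J1}. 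This yields $J_i(\Phi,\varphi_N)=I_i(\Phi,\varphi_N)$ for $i=1,2$, hence $J-I$ is holomorphic on the domain. Your computation of the $\mathscr E_N$-contribution against $F_\Phi,f_\Phi$ is right in spirit, but it only works once $\varphi$ has already been reduced to $\varphi_N$.
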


We note that the last statement of the theorem means that the poles of $J(s, \Phi, \varphi)$ can be read off from the formula Proposition \ref{prop:explicit}.

\begin{proof}
We have  
$$
J(s, \Phi, \varphi) = \int_0^1 z^{2s} J^1 (\Phi^z, \varphi) \, d^\times z, 
$$
where here for any Schwartz function $\Phi$, 
$\Phi^z(x) = \Phi(d(\underline{z}) x)$ and for any $\Phi$,   
$$
J^1(\Phi, \varphi) =  \int_{G_\A^1/G_k}  \varphi(g)  \sum_{x \in S_k} \Phi(g \cdot x)dg.
$$
As in the proof of Theorem \ref{thm:eisI} we have 
$$
\frac{1}{2}\rho_0 \psi\left(\frac{1}{2}\right)  J^1 (\Phi, \varphi) = \lim_{w \to \frac{1}{2}+} \left(w-\frac{1}{2}\right) J(\Phi, \varphi; \psi, w)
$$
where 
$$
J(\Phi, \varphi; \psi, w) = \int_{G_\A^1/G_k} \varphi(g) \mathscr E(\psi, w; g) \left(\sum_{x \in S_k} \Phi(g x) \right)\, dg. 
$$

By Proposition \ref{prop:singular} we have $S_k = S_{0,k} \coprod S_{1, k} \coprod S_{2,k}$, with 
$$
S_{0,k} = \{0\},
$$
$$
S_{1, k} = \coprod_{\alpha \in k^\times} (G_k/B_k) \cdot (0, 0, 0, \alpha) , 
$$
$$
S_{2, k} = \coprod_{\alpha \in k^\times} (G_k /A_k) \cdot (0, 0, \alpha, 0). 
$$
Set for any function $\omega$ on $G_\A^1/G_k$ and $0 \leq i \leq 2$  
$$
J_i(\Phi, \omega, w) = \int_{G_\A^1/G_k} \omega(g) \mathscr E(\psi, w; g) \left(\sum_{x \in S_{i,k}} \Phi(g x) \right)\, dg, 
$$
after suppressing $\psi$ in the notation. 
\

As in the proof of Theorem \ref{thm:eisI}, $J_0(\Phi, \varphi, w)$ does not contribute to $J(s, \Phi, \varphi)$. So it suffices to analyze $J_1$ and $J_2$. As in the proof of Theorem \ref{thm:cuspJ}, $J_1(s, \Phi, \varphi-\varphi_N) + J_2(s, \Phi, \varphi - \varphi_N)$ has an analytic continuation to a holomorphic function on the domain $\Re s >3/4$. This means we need to analyze the analytic continuation and poles of $J_1(\Phi, \varphi_N, w)$ and $J_2(\Phi, \varphi_N, w)$. 

\

First we look at $J_1(\Phi, \varphi_N, w)$. We have 
$$
J_1(\Phi, \varphi_N, w) = \int_{G_\A^1/G_k} \varphi_N(g)\mathscr E(\psi, w; g)\sum_{x \in S_{1, k}} \Phi(g\cdot x) dg 
$$
$$
=\int_{G_\A^1/G_k}  \varphi_N(g)\mathscr E(\psi, w; g)\sum_{\gamma \in G_k/B_k} \sum_{\alpha \in k^\times}\Phi(g\cdot \gamma\cdot (0,0,0,\alpha)  ) dg 
$$
$$
=\int_{G_\A^1/B_k} \varphi_N(g)\mathscr E(\psi, w; g) \sum_{\alpha \in k^\times} \Phi(g\cdot (0,0,0,\alpha) ) dg
$$ 
$$
=\int_{U_\A}\int_{B_\A^1/B_k}  \varphi_N( u g)\mathscr E(\psi, w; ug)\sum_{\alpha \in k^\times} \Phi(u  g\cdot (0,0,0,\alpha) ) \, dg \, du. 
$$
Let $\widetilde\Phi$ be the averaged version of $\Phi$ over $K$. Then we get 
$$
=\int_{B_\A^1/B_k}  \varphi_N( g)\mathscr E(\psi, w; g)\sum_{\alpha \in k^\times} \widetilde\Phi(g\cdot (0,0,0,\alpha) ) \, dg   
$$
$$
= \int_0^\infty d^\times \lambda \cdot \lambda^{-2} \int_{B_\A^0/B_k} db \, \varphi_N( a(\underline{\lambda}) b)\mathscr E(\psi, w; a(\underline{\lambda}) b)\sum_{\alpha \in k^\times} \widetilde\Phi( a(\underline{\lambda}) b \cdot (0,0,0,\alpha))  \, dg 
$$
$$
=\int_0^\infty d^\times \lambda \cdot \lambda^{-2} \int_{(\A^1/k^\times)^2} d^\times t_1 \, d^\times t_2 \int_{\A / k} dc  \, 
$$
$$
 \varphi_N( a(\underline{\lambda}) a(t_1, t_2) n(c) ) \mathscr E (\psi, w; a(\underline{\lambda}) a(t_1, t_2) n(c) ) \sum_{\alpha \in k^\times}\widetilde\Phi(0,0,0,\lambda^{-3} t_2^2 t_1^{-1} \alpha)
$$
$$
=\int_0^\infty d^\times \lambda \cdot \lambda^{-2} \int_{(\A^1/k^\times)^2} d^\times t_1 \, d^\times t_2 \int_{\A / k} dc  \, 
$$
$$
 \varphi_N( a(\underline{\lambda})) \mathscr E (\psi, w; a(\underline{\lambda}) a(t_1, t_2) n(c) ) \sum_{\alpha \in k^\times}\widetilde\Phi(0,0,0,\lambda^{-3} t_2^2 t_1^{-1} \alpha),  
$$
by the explicit formula for $\varphi_N$. Now we perform the $c$ integral to obtain 
$$
=\int_0^\infty d^\times \lambda \cdot \lambda^{-2} \int_{(\A^1/k^\times)^2} d^\times t_1 \, d^\times t_2  
$$
$$
 \varphi_N( a(\underline{\lambda})) \mathscr E_N (\psi, w; a(\underline{\lambda}) a(t_1, t_2)) \sum_{\alpha \in k^\times}\widetilde\Phi(0,0,0,\lambda^{-3} t_2^2 t_1^{-1} \alpha)  
$$
$$
=\int_0^\infty d^\times \lambda \cdot \lambda^{-2} \int_{(\A^1/k^\times)^2} d^\times t_1 \, d^\times t_2  
$$
$$
 \varphi_N( a(\underline{\lambda})) \mathscr E_N (\psi, w; a(\underline{\lambda})) \sum_{\alpha \in k^\times}\widetilde\Phi(0,0,0,\lambda^{-3} t_2^2 t_1^{-1} \alpha),  
$$
by the explicit formula for $E_N$. A change of variables gives 
$$
=\mathrm{vol} (\A^1/k^\times)  \int_0^\infty d^\times \lambda \cdot \lambda^{-2} \int_{\A^1/k^\times} d^\times t_1   
$$
$$
 \varphi_N( a(\underline{\lambda})) \mathscr E_N (\psi, w; a(\underline{\lambda})) \sum_{\alpha \in k^\times}\widetilde\Phi(0,0,0,\lambda^{-3} t_1^{-1} \alpha)
$$
$$
=\mathrm{vol} (\A^1/k^\times)  \int_0^\infty d^\times \lambda \cdot \lambda^{-2} \int_{\A^1} d^\times t_1  
$$
$$
 \varphi_N( a(\underline{\lambda})) \mathscr E_N (\psi, w; a(\underline{\lambda})) \widetilde\Phi(0,0,0,\lambda^{-3} t_1^{-1}). 
$$
This is precisely the expression obtained in Equation \eqref{eq:26}, and for that reason, with the obvious notation, $J_1(\Phi, \varphi_N) = I_1(\Phi, \varphi_N)$ and its value is given by Lemma \ref{lem:15}.

\

Next, we study 
$$
J_2(\Phi, \varphi_N, w) = \int_{G_\A^1/G_k} \varphi_N(g)\mathscr E(\psi, w; g)\sum_{x \in S_{2, k}} \Phi(g\cdot x) dg.
$$

Let 
$$
\mathscr E'(\psi, w; g) = \frac{1}{2\pi i}\oint_{\substack{\Re z = x_0\\1/2 < x_0 < \Re w} } \frac{\psi(z)}{w-z}    (E(z, g)- E_N(z, g)) \, dz.
$$
Then in our normalization, the inequality in the proof of Proposition 6.5 of \cite{W} says that for each fixed $\psi$, fixed $\epsilon>0$, and fixed $N$ large, there is a constant $c$ such that 
\begin{equation}\label{eq:E'}
   | \mathscr E'(\psi, w; g) | < c \chi_P(g)^{-N}
\end{equation}
for $\Re w > \epsilon$. 

Write this expression as 
$$
\Xi_1(w) + \Xi_2(w)
$$
with 
$$
\Xi_1(w) =\int_{G_\A^1/G_k} \varphi_N(g)\mathscr E'(\psi, w; g)\sum_{x \in S_{1, k}} \Phi(g\cdot x) dg
$$
and 
$$
\Xi_2(w) =\int_{G_\A^1/G_k} \varphi_N(g)\mathscr E_N(\psi, w; g)\sum_{x \in S_{2, k}} \Phi(g\cdot x) dg.
$$

Because of the rapid decay of $\mathscr E'$ as in Equation \eqref{eq:E'}, the proof of Proposition 6.5 of \cite{W} shows that $\Xi_1(w)$ is holomorphic for $\Re w > 0$, and for that reason its contribution to $J$ is zero. We analyze $\Xi_2(w)$. We have 
$$
\Xi_2(w) =\int_{G_\A^1/G_k}  \varphi_N(g) \mathscr E_N(\psi, w; g) \sum_{\gamma \in G_k/A_k} \sum_{\alpha \in k^\times} \Phi(g \gamma\cdot (0,0,\alpha,0))  dg. 
$$
$$
=\int_{G_\A^1/A_k}  \varphi_N(g)\mathscr E_N(\psi, w; g) \sum_{\alpha \in k^\times} \Phi(g\cdot (0,0,\alpha,0) ) dg 
$$
 $$
= \int_{U_\A} \int_{B_\A^1/A_k} db \, \varphi_N(u b) \mathscr E_N(\psi, w; ub) \sum_{\alpha \in k^\times}  \Phi(u b \cdot (0,0,\alpha, 0)) \, du 
$$
$$
= \int_{B_\A^1/A_k} db \,  \varphi_N( b) \mathscr E_N(\psi, w; b)\sum_{\alpha \in k^\times}  \widetilde\Phi( b \cdot (0,0,\alpha, 0)) 
$$
$$
= \int_{0}^\infty d^\times \lambda \, \lambda^{-2} \int_{B_\A^0 / A_k} db \,  \varphi_N(a(\underline{\lambda}) b) \mathscr E_N(\psi, w; a(\underline{\lambda}) b) \sum_{\alpha \in k^\times} \widetilde\Phi(a(\underline{\lambda}) b \cdot (0,0,\alpha, 0)) 
$$
$$
= \int_{0}^\infty \mathscr E_N(\psi, w; a(\underline{\lambda}) ) \varphi_N(a(\underline{\lambda}) )  \lambda^{-2} d^\times \lambda \,  \int_{(\A^1/k^\times)^2} d^\times t_1 \, d^\times t_2 \int_{\A} dc  \,  
$$
$$
\sum_{\alpha \in k^\times}  \widetilde\Phi(a(\underline{\lambda}) a(t_1, t_2) n(c) \cdot (0,0,\alpha, 0)), 
$$
by the explicit formula for $\varphi_N$ and $E_N$. We then get 
$$
\int_{0}^\infty \mathscr E_N(\psi, w; a(\underline{\lambda}) )\varphi_N(a(\underline{\lambda}) )  \lambda^{-2} d^\times \lambda \, \int_{(\A^1/k^\times)^2} d^\times t_1 \, d^\times t_2 \int_{\A} dc  
$$
$$
\sum_{\alpha \in k^\times}\widetilde\Phi(0, 0, \underline\lambda^{-1} t_2 \alpha, \underline\lambda^{-3} t_2^2 t_1^{-1} c \alpha) . 
$$
Let $u =\underline\lambda^{-3} t_2^2 t_1^{-1} c \alpha$. Then $dc = \lambda^3 \, du$, and we have 
$$
\int_{0}^\infty  \mathscr E_N(\psi, w; a(\underline{\lambda}) ) \varphi_N(a(\underline{\lambda}) )   \lambda \, d^\times  \lambda \int_{(\A^1/k^\times)^2} d^\times t_1 \, d^\times t_2   
$$
$$
\int_{\A} du\sum_{\alpha \in k^\times} \widetilde\Phi(0, 0, \underline\lambda^{-1} t_2 \alpha, u) . 
$$
$$
=\int_{0}^\infty  \mathscr E_N(\psi, w; a(\underline{\lambda}) ) \varphi_N(a(\underline{\lambda}) )   \lambda \, d^\times  \lambda \int_{(\A^1/k^\times)^2} d^\times t_1 \, d^\times t_2  \sum_{\alpha \in k^\times} f_\Phi(\underline\lambda^{-1} t_2 \alpha) 
$$
$$
=  \mathrm{vol} (\A^1/k^\times) \int_{0}^\infty  \mathscr E_N(\psi, w; a(\underline{\lambda}) ) \varphi_N(a(\underline{\lambda}) )   \lambda \, d^\times  \lambda \int_{\A^1}f_\Phi(\underline\lambda^{-1} t_2) d^\times t_2. 
$$

This last expression is equal to the expression in Equation \eqref{I-J1}. This means $J_2(\Phi, \varphi_N, w)-I_2(\Phi, \varphi_N, w)$ is holomorphic on $\Re w > 0$. This means that $J_2(\Phi, \varphi_N) = I_2(\Phi, \varphi_N)$ and its value is given by Lemma \ref{lem:16}. 

\

If we put everything together the result follows. 
\end{proof}

\end{document}